\documentclass[centertags,12pt]{amsart}
\usepackage{latexsym}
\usepackage{amsthm}
\usepackage{hyperref}
\usepackage{amssymb}
\usepackage{color}
\usepackage[utf8]{inputenc}
\usepackage{mathrsfs}
\usepackage[english]{babel}
\usepackage{setspace}
\usepackage{ textcomp }
\usepackage{graphicx}
\usepackage{float}
\usepackage{tikz}
\usepackage{soul}
\usepackage{amsmath}
\graphicspath{ {images/} }

%Commenti
\usepackage{comment}

% page layout

\textwidth = 16.00cm
\textheight = 22.00cm
\oddsidemargin = 0.12in
\evensidemargin = 0.12in
\setlength{\parindent}{0pt}
\setlength{\parskip}{5pt plus 2pt minus 1pt}

\numberwithin{equation}{section}
\makeatletter
\renewcommand{\subsection}{\@startsection
	{subsection}{2}{0mm}{\baselineskip}{-0.25cm}
	{\normalfont\normalsize\bf}}
\makeatother

% theoremlike environments

\newtheorem{thm}{Theorem}[section]

\newtheorem{lem}[thm]{Lemma}

\newtheorem{cor}[thm]{Corollary}

\newtheorem{remark}[thm]{Remark}

\numberwithin{equation}{section}

\newtheorem{obs}[thm]{Remark}
\theoremstyle{definition}

\allowdisplaybreaks

%NUOVE DEFINIZIONI UTILI%

\newcommand{\PG}{{\rm PG}}

\newcommand{\cS}{{\mathcal S}}
\newcommand{\cC}{{\mathcal C}}

\newcommand{\cD}{\mathcal{D}}

\newcommand{\fq}{{\mathbb{F}_q}}
\newcommand{\fqs}{{\mathbb{F}_{q^2}}}

\newcommand{\cE}{\mathcal{E}}

\newcommand{\cA}{\mathcal{A}}

\newcommand{\cT}{{\mathcal T}}

\newcommand{\cQ}{{\mathcal Q}}

%Citare i teroemi con nome per bene.
\makeatletter
\@ifpackageloaded{hyperref}%
{\newcommand{\mylabel}[2]% #1=name, #2 = contents
	{\protected@write\@auxout{}{\string\newlabel{#1}{{#2}{\thepage}%
				{\@currentlabelname}{\@currentHref}{}}}}}%
{\newcommand{\mylabel}[2]% #1=name, #2 = contents
	{\protected@write\@auxout{}{\string\newlabel{#1}{{#2}{\thepage}}}}}
\makeatother

\sloppy

\title{External points to a conic from a Baer subplane }
\begin{document}
	\author[Vincenzo Pallozzi Lavorante]{Vincenzo Pallozzi Lavorante}
	\address{Dipartimento di Matematica Pura e Applicata, Universit\'a degli Studi di Modena e Reggio Emilia}
	\email{vincenzo.pallozzilavorante@unimore.it}

	%\author[Stefano Lia,  Vincenzo Pallozzi L. and Marco Timpanella]{Stefano Lia,  Vincenzo Pallozzi Lavorante and Marco Timpanella}
	
	\begin{abstract}
		For an irreducible conic $\mathcal C$ in a Desarguesian plane of odd square order, estimating the number of points, from a Baer subplane, which are external to $\mathcal C$ is a natural problem. In this paper, a complete list of possibilities is determined for the case where $\mathcal C$ shares at least one point with the subplane.
	\end{abstract}
	
	\maketitle
	
	\begin{small}
		
		{\bf Keywords:} Finite fields, Conics, Cubic surfaces, External points.
		
		{\bf 2000 MSC:} { Primary: 14G05, 14H50. Secondary: 14H20}.
		
	\end{small}

	\section{Introduction} 
	Combinatorial problems in finite projective planes often ask to count the number of points in the intersection of two, mostly geometrically defined, subsets. Such subsets are the subplanes, arcs, ovals, unitals, blocking-sets, their complements, and, in odd order planes, also the set of external, and that of internal points to an oval. Usually, the size of such an intersection heavily depends on the mutual position of the two subsets. Even the problem of finding non-trivial estimates on it may be challenging and any attempt to solve it would require sophisticated counting arguments.
	In a desarguesian plane, that is in a projective plane $PG(2,q)$ over a finite field $\mathbb{F}_q$, the study of this and similar combinatorial problems can greatly benefit from the powerful theory of finite fields and algebraic geometry in positive characteristic.

	A typical problem of this kind, posed in \cite{abatangelo2011mutual}, is the following. The points of a projective plane $\PG(2,q)$ fall into three classes with respect to an (absolutely) irreducible conic, namely the points lying on two tangent lines (\textit{external}), on no tangent line (\textit{internal}) and the point s on the conic. Let $\cC$ and $\cD$ be two distinct irreducible conics.  The points of $\cD$ fall into one of three subsets, namely those points $E_\cC(\cD)$ of $\cD$ that are external to $\cC$, those points $I_\cC(\cD)$ that are internal, and
	$\cC \cap \cD$. This gives rise to the functions $\varepsilon_\cC(\cD) = |E_\cC(\cD)|$ and $\iota_\cC(\cD) = |I_\cC(\cD)|$ defined over the set of all conics $\cD$ distinct from $\cC$. The combinatorial problem is to compute, or estimate the value sets of $\varepsilon= \varepsilon_\cC(\cD)$ (or equivalently of $\iota=\iota_\cC(\cD)$). A solution is given in \cite{abatangelo2011mutual}: either $\varepsilon=0,q-1,q,q+1$, or $\frac{1}{2}(q-1)-(\sqrt{q}-3)\le \varepsilon\le \frac{1}{2}(q-1)-(\sqrt{q}+3)$. 
	Let $\PG(2,q)$ be the projective plane defined over a finite field $\mathbb{F}_q$ of odd order, canonically embedded in the projective plane $\PG(2,q^2)$ over the quadratic extension $\mathbb{F}_{q^2}$ of $\mathbb{F}_q$. Let $\cC$ be an (absolutely) irreducible conic of $\PG(2,q^2)$ with homogeneous equation $F(X_0,X_1,X_2)=0$ where $F\in \mathbb{F}_{q^2}[\mathtt{X}_0,\mathtt{X}_1,\mathtt{X}_2]$ is an irreducible quadratic form. Then the number of points of $\cC$ lying in $\PG(2,q)$ is at most $4$ unless $\cC$ is a conic defined over $\mathbb{F}_q$, that is, $F\in \mathbb{F}_{q}[\mathtt{X}_0,\mathtt{X}_1,\mathtt{X}_2]$, in which case that number equals $q+1$ (this because $ 5 $ points define a unique conic and the conic would through $ 5 $ points in $ \PG(2,q) $ would be defined over $ \fq $). In this chapter we are interested in the number $E_q(\cC)$ of external points to $\cC$ which lie in $\PG(2,q)$. Since conics defined over $\mathbb{F}_{q^2}$ are not equivalent over $\mathbb{F}_q$ in general, $E_q(\cC)$ viewed as a function of $\cC$ is not expected to be constant when $\cC$ runs over all conics of $\PG(2,q^2)$. Our goal is to determine the relative value set.
	
	The main results are stated in the following theorems. 
	\begin{thm} \label{th:C1}
		Let $\cC$ be a conic in the desarguesian plane $\PG(2,q^2)$  with at least one rational point in $ \PG(2,q) $ and $q \geq 5$. Then
		\[E_q(\cC)=q^2 \mbox{ if and only if } \cC \mbox{ is defined over }\fq.\]
	\end{thm}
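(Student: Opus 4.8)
The plan is to handle the two implications separately: the forward direction is essentially a direct count, while the reverse reduces to a single double count together with one elementary geometric estimate. Throughout I write $\cC\colon X^{T}AX=0$ with $A$ a symmetric invertible $3\times 3$ matrix over $\fqs$, and I use that $\cC$ is defined over $\fq$ precisely when $A$ is a scalar multiple of a matrix over $\fq$; ``external'' always refers to tangent lines of $\cC$ computed in $\PG(2,q^{2})$.

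\emph{The ``if'' direction.} Assume $\cC$ is defined over $\fq$. Then $\cC$ meets $\PG(2,q)$ in $q+1$ points, and I claim every other point $P\in\PG(2,q)$ is external. The polar line $\pi_P$ of $P$ is then defined over $\fq$, and since $P\notin\cC$ it is not tangent to $\cC$, so it meets $\cC$ in two distinct points over $\overline{\fq}$; these two points are either both $\fq$-rational or form a Galois-conjugate pair, hence in every case they lie in $\PG(2,q^{2})$, and the two (necessarily distinct) tangents of $\cC$ at them pass through $P$. Thus $P$ is external, and $E_q(\cC)=(q^{2}+q+1)-(q+1)=q^{2}$.

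\emph{The ``only if'' direction.} I will prove the contrapositive: if $\cC$ is not defined over $\fq$ then $E_q(\cC)<q^{2}$. For $R\in\cC(\fqs)$ let $\ell_R$ be the tangent line $R^{T}AX=0$; the map $R\mapsto\ell_R$ is a bijection from $\cC(\fqs)$ onto the set of tangents of $\cC$. I would then count the incidence set
\[
\mathcal I=\{(P,R):\ P\in\PG(2,q),\ R\in\cC(\fqs),\ R^{T}AP=0\}
\]
in two ways. Counting over $P$: a point $P\in\PG(2,q)$ lies on two tangents if external, on one if on $\cC$, and on none if internal, so $|\mathcal I|=2E_q(\cC)+|\cC\cap\PG(2,q)|$. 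Counting over $R$: the partners of a given $R$ are exactly the $\fq$-rational points of the line $\ell_R$, and a line of $\PG(2,q^{2})$ carries $q+1$ such points if it is defined over $\fq$ and at most one otherwise.

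So everything comes down to bounding the number $k$ of points $R\in\cC(\fqs)$ whose tangent $\ell_R$ is defined over $\fq$. Here I would observe that $R\mapsto[AR]$ is a projectivity of $\PG(2,q^{2})$ carrying $\cC$ onto the conic $\cC'\colon Y^{T}A^{-1}Y=0$, and that $\ell_R$ is defined over $\fq$ exactly when the point $[AR]$ lies in $\PG(2,q)$; hence $k=|\cC'\cap\PG(2,q)|$. Since $A^{-1}$ is a scalar multiple of a matrix over $\fq$ if and only if $A$ is, the conic $\cC'$ is not defined over $\fq$ either, so the ``five points determine a conic'' remark from the introduction yields $k\le 4$. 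Therefore $|\mathcal I|\le k(q+1)+(q^{2}+1-k)=q^{2}+kq+1\le q^{2}+4q+1$, whence
\[
E_q(\cC)=\tfrac12\bigl(|\mathcal I|-|\cC\cap\PG(2,q)|\bigr)\le\tfrac12\bigl(q^{2}+4q+1\bigr)<q^{2}
\]
as soon as $q^{2}-4q-1>0$, i.e.\ for $q\ge5$ (the hypothesis that $\cC$ has an $\fq$-point only sharpens this, removing the $+1$). The main obstacle is precisely the estimate $k\le4$: the useful insight is that the $\fq$-rational tangents of $\cC$ are recorded by the $\fq$-rational points of the dual conic $\cC'$, which is ``as non-$\fq$'' as $\cC$ itself, after which the elementary five-points bound finishes the argument. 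The only genuinely routine facts I am glossing over are that a line through two $\fq$-rational points is itself defined over $\fq$, and that an $\fq$-rational line with no $\fq$-point on an $\fq$-rational conic meets that conic in a conjugate pair of $\fqs$-points.
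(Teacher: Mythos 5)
Your proof is correct, and for the harder (``only if'') direction it takes a genuinely different and considerably more elementary route than the paper. The paper obtains the implication by translating the condition $X^t\cA X=\vartheta^2$ into a cubic surface $\cS$ in $\PG(3,q)$ and then invoking the Weil/Swinnerton-Dyer point counts for cubic surfaces (Theorems \ref{th:Weil} and \ref{th:WeilSing}) together with the reducible-case analysis (Theorems \ref{th:Qs}, \ref{th:Qns}) to see that none of the resulting values of $E_q(\cC)$ equals $q^2$ when $\cC$ is not defined over $\fq$. You instead double-count incidences between points of the Baer subplane and tangent lines of $\cC$, and the only nontrivial input is the bound $k\le 4$ on $\fq$-rational tangents, which you get by passing to the dual conic $Y^tA^{-1}Y=0$ and reusing the paper's own ``five points determine a conic'' observation; this yields $E_q(\cC)\le\frac12(q^2+4q+1)<q^2$ for $q\ge5$, and in fact does not even need the hypothesis that $\cC$ meets $\PG(2,q)$. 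The trade-off is that your count only produces an upper bound, whereas the paper's machinery is what delivers the full value set of Theorem \ref{th:2}; as a bonus, your bound $E_q(\cC)\le\frac12(q^2+4q)$ (using the rational point) is sharper than what one would need and is consistent with Theorem \ref{th:C1} --- it even suggests that the value $25$ listed for $q=5$ in Theorem \ref{th:2} cannot actually occur. Your ``if'' direction is essentially the paper's Section \ref{sec:max} argument with the pole--polar correspondence replacing the square-class lemma; both are fine.
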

	
	\begin{thm} \label{th:2} In the desarguesian plane $\PG(2,q^2)$ let $\cC$ be a conic not defined over $\fq$ with at least one $\fq$-rational point. Then:
		\begin{itemize}
			\item For $q=3$, $E_q(\cC) \in \{3,4,5,6,7,8,9\}$;
			
			\item for $q=5$, $E_q(\cC) \in \{11,12,14,15,16,17,19,20,21,22,25\}$;
			\item for $q >5$, we have \[  E_q(\cC)= \frac{1}{2}(q^2+(\alpha-1)q-n_0), \] where $n_0\in \{0,1,2,3\}$ and $ \alpha \in \{1,2,3,4,5,7\}$,
			and $\alpha - n_0$ is even.
		\end{itemize}

	\end{thm}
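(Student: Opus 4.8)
\section*{A proof proposal}

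The plan is to translate $E_q(\cC)$ into a character sum and then into the number of $\fq$-rational points of an associated surface.

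\textbf{Reduction to a character sum.} Recall the classical criterion, valid over any field of odd characteristic: if $\cC=\rV(F)$ with $F=X^{\mathrm T}A\,X$, then a point $P\notin\cC$ is external to $\cC$ exactly when $-\det(A)\,F(P)$ is a nonzero square. Applying this over $\fqs$ to $P\in\PG(2,q)$, and using that $x\in\fqs^{*}$ is a square precisely when its norm $\N(x)=x^{q+1}$ is a square in $\fq$, one finds that $P$ is external iff $c_0\,G(P)$ is a nonzero square in $\fq$, where $c_0=\N(-\det A)\in\fq^{*}$ and $G:=F\cdot F^{(q)}\in\fq[X_0,X_1,X_2]$ is the quartic form obtained by multiplying $F$ by its Frobenius conjugate; note that $G(P)=0$ iff $P\in\cC\cap\PG(2,q)$. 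Setting $m=|\cC\cap\PG(2,q)|$ (so $1\le m\le 4$ by the discussion in the introduction) and letting $\eta$ be the quadratic character of $\fq$, a routine count gives
\[E_q(\cC)=\tfrac12\Bigl(q^2+q+1-m+\eta(c_0)\sum_{P\in\PG(2,q)}\eta\bigl(G(P)\bigr)\Bigr).\]

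\textbf{Geometric interpretation.} The quartic $G=0$ is the union $\cC\cup\cC^{(q)}$ of two conjugate conics, distinct because $\cC$ is not defined over $\fq$; they meet in the base locus $B=\cC\cap\cC^{(q)}$, four points counted with multiplicity, with $B\cap\PG(2,q)=\cC\cap\PG(2,q)$. Hence the character sum $T:=\sum_P\eta(G(P))$ is read off from the number of $\fq$-points of the double plane $\cY\colon Y^{2}=c_0\,G(X_0,X_1,X_2)$ branched along $\cC\cup\cC^{(q)}$; its singularities lie over $B$, and $\cY$ is a (possibly singular) del Pezzo surface of degree $2$. Resolving the nodes over $B$ and contracting a suitable $\fq$-rational line -- one produced, via the pencil $\langle F,F^{(q)}\rangle$ of conics through $B$, from the preimage of an $\fq$-rational member of the pencil -- replaces $\cY$ by a (weak) cubic surface $\cS$, and comparing point counts through these birational modifications introduces only terms controlled by $m$ and by the types of the degenerate members of the pencil. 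The pencil is the backbone of the argument: each of its $q+1$ members defined over $\fq$ meets $\PG(2,q)$ in a fibre on which $F$ has constant square class, so each such fibre consists entirely of external or entirely of internal points, and exactly $(q+1)/2$ of the members are of external type. Organising $E_q(\cC)$ along these fibres yields
\[E_q(\cC)=\tfrac{q+1}{2}(q+1-m)+q\,\delta,\]
which already forces $n_0=m-1$ and leaves a single integer $\delta$ to be pinned down; each degenerate member of the pencil contributes $0$, $+q$ or $-q$ to $\delta$ according as it is a double line, a pair of $\fq$-lines, or a conjugate pair of lines.

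\textbf{From cubic surfaces to the value set.} It remains to determine the admissible $\delta$, equivalently the admissible Frobenius classes on $\cS$. Here one invokes the classification of the possible numbers of $\fq$-points of cubic surfaces, $|\cS(\fq)|=q^2+\varepsilon q+1$ with $\varepsilon$ in an explicit finite list, together with the fact that the Frobenius is not arbitrary: it commutes with the involution interchanging $\cC$ and $\cC^{(q)}$, equivalently it preserves the conic-bundle structure coming from the pencil, which trims the list of admissible $\varepsilon$. A concrete avatar of this constraint is the relation $\prod_i\xi_i^{m_i}=\det A/\overline{\det A}$ among the parameters $\xi_i$ (with multiplicities $m_i$) of the degenerate members, combined with $(\det A/\overline{\det A})^{(q+1)/2}=\eta(c_0)$, which forbids the extreme distributions of the $\xi_i$ between the two square classes. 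Translating back, one obtains $E_q(\cC)=\tfrac12\bigl(q^2+(\alpha-1)q-n_0\bigr)$ with $n_0\in\{0,1,2,3\}$ and $\alpha\in\{1,2,3,4,5,7\}$, the condition that $\alpha-n_0$ be even being forced by $E_q(\cC)\in\bZ$ together with $q$ odd. For $q=3$ and $q=5$ the point-count estimates for $\cS$ no longer separate individual values (the error terms are comparable to the spacing of the list), so there I would instead enumerate the finitely many $\mathrm{PGL}(3,q)$-orbits of pairs (conic not defined over $\fq$ having a rational point, Baer subplane) and evaluate $E_q$ on one representative of each, obtaining the two explicit lists.

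\textbf{Main obstacle.} The delicate step is the last one: following the Frobenius action faithfully through the resolution of the nodes over $B$ and the contraction to $\cS$, and then extracting exactly which cubic-surface point counts survive the symmetry constraint -- in particular ensuring that no spurious value of $\alpha$ is introduced and that all degeneracy types of $B$ (which govern both $m$ and the singularities of $\cY$ and $\cS$) are treated uniformly. The reductions in the first two paragraphs are comparatively routine.
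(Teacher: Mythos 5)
Your route is genuinely different from the paper's. The paper never forms the norm form $F\cdot F^{(q)}$ or the pencil through $\cC\cap\cC^{(q)}$: it normalises $\cC$ to pass through $(0:1:0)$, splits $X^t\cA X=\vartheta^2$ into two equations over $\fq$, eliminates $Y$, and lands directly on a cubic surface $\cS\subset\PG(3,q)$ in the coordinates $(t_1,t_2,X,Z)$ with $E_q(\cC)=\frac12(|\cS(\fq)|-n_0-q-1)$; the smooth case is then Weil/Swinnerton--Dyer, the singular case is handled by showing $\cS$ has at most one (rational, double) singular point and classifying its tangent cone, and the reducible case by splitting $\cS$ into a plane and a quadric. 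Your character-sum reduction and the observation that the $q+1$ Galois-stable members of the pencil cut $\PG(2,q)\setminus\cC$ into fibres of constant external/internal type are correct and appealing, but as written the argument has two genuine gaps.

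The first, and the one you cannot wave at, is the exclusion of the trace values $\alpha\in\{-2,-1,0\}$, i.e.\ the lower bound on $E_q(\cC)$. Your ``symmetry constraint'' (Frobenius commuting with the covering involution, the relation among the $\xi_i$) is asserted, not proved, and I do not see why it would forbid exactly these traces: a conic-bundle structure on a cubic surface is perfectly compatible with negative Frobenius trace in general. The paper instead uses an elementary combinatorial fact absent from your proposal: a Baer subplane is a blocking set, hence meets every one of the $q^2+1$ tangent lines to $\cC$, and since a point off $\cC$ lies on $0$ or $2$ tangents this forces $E_q(\cC)\ge\frac{q^2-3}{2}$, which eliminates $\alpha\le 0$ for $q>3$. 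Without some such input your method only yields the larger list $\alpha\in\{-2,-1,0,1,2,3,4,5,7\}$. The second gap is that everything between ``$\cY$ is a singular degree-$2$ del Pezzo'' and the final list of admissible $\alpha$ and $\delta$ is unproved: the existence of an $\fq$-rational line to contract, the point-count bookkeeping through the resolution of the nodes over $B$ (whose types depend on the intersection multiplicities of $\cC$ and $\cC^{(q)}$, hence on $m$ and on the degeneration pattern of the pencil), and the claim that degenerate members contribute only $0,\pm q$. This is precisely the content of the paper's Sections 5--6 and is where the real work lies; note also that the refined lists for $q=3,5$ come from Swinnerton--Dyer's improvement $\alpha\le 5$ for small $q$, whereas you defer to an orbit enumeration that is not carried out.
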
 
	
	\begin{remark}
		When $q=3$ all the values $  $ occur, that is $7$ possibilities.\\
		When $q=5$ the only values missing are $\{16,22\}$, that is $2$ out of $11$.\\
		When $q >5$ we have $13$ possibilities.
	\end{remark}
	\begin{comment}
		\begin{thm}
			In the desarguesian plane $\PG(2,q^2)$ let $\cC$ a conic with at least one point defined over $\fq$. Let suppose that the associated cubic $\cS$ splits as $\Pi \cup \cQ$, the possibilities for $E_q(\cC)$ are listed below:
			\begin{itemize}
				\item If $ \cQ $ is a quadric cone, then
				\[E_q(\cC)=\begin{cases}
					\begin{aligned}
						\frac{1}{2}&(q^2-q)+q, \\
						\frac{1}{2}&(q^2+q)+q,\\
						\frac{1}{2}&(q^2-1)+q, \\
					\end{aligned}
				\end{cases}\]
				
				\item If $ \cQ $ is a non-singular quadric, then

				\[E_q(\cC)=\begin{cases}	\begin{aligned}	&\frac{1}{2}(q^2-q-2)+q,\\
						&\frac{1}{2}(q^2+q-2)+ q,\\
						&\frac{1}{2}(q^2-q)+q,\\
						&\frac{1}{2}(q^2+q)+ q,\\
						&\frac{1}{2}(q^2-1)+ q
					\end{aligned}
				\end{cases}\]
				
			\end{itemize}
			(See \emph{Theorems} \emph{\ref{th:Qs}} and \emph{\ref{th:Qns}}).
		\end{thm}
		
	\end{comment}
	\section{External points} Our notation and terminology are standard; see \cite{casse2006projective,hirschfeld1979projective,hirschfeld1985finite, hirschfeld1991general}. In particular, for a point $(X_0:X_1:X_2)$ of $\PG(2,q^2)$ we also use the shorter notation $X=(X_0:X_1:X_2)$.
	Let $$F(X_0,X_1,X_2)= \sum_{0\leq i,j \leq2} a_{ij}X_iX_j.$$ where $a_{ij}\in \mathbb{F}_{q^2}$, and $\det(a_{ij})\neq 0$. Then $\cC$ has equation $X^t\cA X=0$, for
	\[\cA=\begin{pmatrix}
		a_{00} & \frac{a_{01}}{2} & \frac{a_{02}}{2} \\
		\frac{a_{01}}{2} & a_{11} & \frac{a_{12}}{2} \\
		\frac{a_{02}}{2} & \frac{a_{12}}{2} & a_{22}
	\end{pmatrix}\]
	For any two distinct points $P$ and $Q$ in $\PG(2,q^2)$, the line $PQ$ meets $\cC$ in $\PG(2,q^2)$ or in a quadratic extension $\PG(2,q^4)$ of $\PG(2,q^2)$, and their common points arise from the roots $(\xi,\vartheta)$  of the homogeneous  Joachimsthal equation $$\xi^2P^t\cA P+2\xi\vartheta P^t\cA Q+\vartheta^2Q^t\cA Q =0.$$ More precisely, if  $(\xi_1,\vartheta_1)$ and $(\xi_2,\vartheta_2)$  are the (non necessarily distinct) non-$\fq$-proportional solutions of the Joachimsthal equation, then the common points are $U_i=\xi_iP+\vartheta_iQ$ for $i=1,2$. Joachimsthal equation is useful to distinguish between external and internal points of $\cC$.
	\begin{lem}[{\cite[Theorem 7.51]{casse2006projective}}]
		\label{lemA1}If $P$ runs over the set of all external points to $\cC$ then the values $P^t\cA P$ are all squares or all non-squares. For an external point $P$, if $P^t\cA P$ is a square then $Q^t\cA Q$ is a non-square for every internal point $Q$ to $\cC$.
	\end{lem}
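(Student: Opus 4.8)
The plan is to detect whether a point $P\notin\cC$ is external or internal by looking at the polar line of $P$ with respect to $\cC$, and then to read off the square class of $P^t\cA P$ from the tangential (dual) equation of $\cC$. Throughout, $P$ is a point off $\cC$, so $P^t\cA P=F(P)\neq 0$. The first step is to reduce to the polar line $\ell_P\colon P^t\cA X=0$. For $R\in\cC$ the tangent to $\cC$ at $R$ is the line $R^t\cA X=0$, and $P$ lies on this tangent precisely when $P^t\cA R=0$, i.e.\ (since $\cA$ is symmetric) when $R\in\ell_P$; hence the tangents to $\cC$ through $P$ correspond bijectively to the $\fqs$-rational points of $\cC$ lying on $\ell_P$. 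The line $\ell_P$ cannot itself be tangent to $\cC$, since a tangent has the form $R^t\cA X=0$ with $R\in\cC$, and $\cA P$ proportional to $\cA R$ would force $P=R$ ($\cA$ being nonsingular). Therefore $\ell_P$ meets $\cC$ in $0$ or $2$ points, every point off $\cC$ is either external (two tangents) or internal (no tangent), and $P$ is external if and only if $\ell_P$ meets $\cC$ in two $\fqs$-rational points.

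The second step is the tangential criterion: a line with coordinate vector $u$ that is not tangent to $\cC$ meets $\cC$ in two $\fqs$-rational points if and only if $-\det(\cA)\,(u^t\cA^{-1}u)$ is a nonzero square in $\fqs$. Since a projectivity of $\PG(2,q^2)$ preserves the external/internal dichotomy while the matching congruence $\cA\mapsto g^t\cA g$ leaves the value $P^t\cA P$ unchanged, we may assume $\cA=\diag(d_0,d_1,d_2)$. Eliminating from $\{\,u^tX=0,\ X^t\cA X=0\,\}$ a coordinate $X_i$ with $u_i\neq 0$ produces a binary quadratic form, nonzero because $\cC$ contains no line, whose discriminant equals $-u_i^2\det(\cA)\,(u^t\cA^{-1}u)$; the intersection has $2$, $1$ or $0$ rational points according as this scalar is a nonzero square, is zero, or is a nonzero non-square, and since $u_i^2$ is a square the condition for two points is equivalent to $-\det(\cA)\,(u^t\cA^{-1}u)$ being a nonzero square. (Equivalently, one may compute the Joachimsthal discriminant of $\ell\cap\cC$ from two points spanning $\ell$ and relate it to $u^t\cA^{-1}u$ via the pole of $\ell$.)

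For the conclusion, note that $\ell_P$ has coordinate vector $u=\cA P$, so because $\cA^t=\cA$,
\[
u^t\cA^{-1}u=(\cA P)^t\cA^{-1}(\cA P)=P^t\cA P .
\]
Combining the two steps, $P$ is external to $\cC$ if and only if $-\det(\cA)\,(P^t\cA P)$ is a nonzero square, a condition whose truth depends only on the square class of $P^t\cA P$ relative to the fixed class of $-\det(\cA)$. Hence, as $P$ runs over all external points, the values $P^t\cA P$ all lie in a single square class: they are all squares when $-\det(\cA)$ is a square and all non-squares otherwise, which is the first assertion. For an internal point $Q$ one has $Q^t\cA Q\neq 0$ and $-\det(\cA)\,(Q^t\cA Q)$ a non-square; so if $P^t\cA P$ is a square for some external point $P$, then $-\det(\cA)$ is a square, and therefore $Q^t\cA Q$ must be a non-square for every internal point $Q$, which is the second assertion.

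\emph{Main obstacle.} The only step carrying genuine content is the tangential criterion of the second paragraph: organising the coordinate elimination and tracking the square class of the resulting discriminant (the inessential factor $u_i^2$ and the sign $-1$). Everything else is formal manipulation with polar lines together with the identity $(\cA P)^t\cA^{-1}(\cA P)=P^t\cA P$.
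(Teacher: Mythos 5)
Your proof is correct. Note that the paper does not prove this lemma at all: it is quoted verbatim from \cite[Theorem 7.51]{casse2006projective}, so there is no internal argument to compare against. What you give is essentially the standard textbook proof of that cited result: identify externality of $P$ with the polar line $\ell_P\colon P^t\cA X=0$ meeting $\cC$ in two rational points, compute the discriminant of the restriction of the quadratic form to a line with coordinates $u$ as $-4u_i^2\det(\cA)\,(u^t\cA^{-1}u)$, and specialise to $u=\cA P$ so that $u^t\cA^{-1}u=P^t\cA P$; the dichotomy then reads off from the fixed square class of $-\det(\cA)$. All the individual steps check out (the polar of a point off $\cC$ is never tangent, the eliminated binary form is not identically zero since $\cC$ is irreducible, and the congruence reduction to diagonal form changes $-\det(\cA)(u^t\cA^{-1}u)$ only by the square factor $\det(g)^2$), so the argument is complete and would serve as a self-contained substitute for the citation.
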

	Therefore, in terms of the equation $X^t\cA X=\vartheta^2$ with $\vartheta \in \fqs\setminus \{ 0 \}$, the problem of determining $E_q(\cC)$ asks to find its homogeneous solutions $X=(X_0:X_1:X_2)$, with $X_i\in \mathbb{F}_q$.
	
	\section{The maximal case} \label{sec:max}
	
	We start the discussion with a conic $\cC$ defined over $\fq$. Write the equation of $\cC$ as
	\begin{equation*}\label{conicfq}
		\cC \colon aX^2+bXY+cY^2+dXZ+eYZ+fZ^2 =0,
	\end{equation*}
	with $a,b,c,d,e,f \in \fq$.

	\begin{lem} \label{L1}
		Let $\cC$ be a conic defined over $\fq$ with matrix $\cA$. For every point $P \in \PG(2,q)$ we have $P^t\cA P \in \square_{q^2}$.
	\end{lem}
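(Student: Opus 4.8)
The plan is to exploit the fact that a conic defined over $\fq$ has $q+1$ rational points in $\PG(2,q)$, together with Lemma~\ref{lemA1}, to pin down whether $P^t\cA P$ is a square or a non-square for a rational point $P$. First I would recall that for the point $P$ of $\PG(2,q)$, the scalar $P^t\cA P$ lies in $\fq$, since $\cA$ has entries in $\fq$ and $P$ has coordinates in $\fq$. So the real question is whether this element of $\fq$ is a square in $\fqs$. But every element of $\fq^*$ is a square in $\fqs$: the norm map $\N\colon \fqs^* \to \fq^*$ is surjective, and more directly, if $\omega$ generates $\fqs^*$ then $\omega^{q+1}$ generates $\fq^*$, so every element of $\fq^*$ is an even power of $\omega$, hence a square in $\fqs$. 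The only remaining case is $P^t\cA P = 0$, which happens precisely when $P$ lies on $\cC$; such a zero is trivially in $\square_{q^2}$ under the usual convention, or one simply notes there is nothing to check there.

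Carrying this out: I would fix $P = (X_0:X_1:X_2) \in \PG(2,q)$, observe $P^t \cA P \in \fq$ because $\cA \in M_3(\fq)$, and then split into the two cases $P \in \cC$ and $P \notin \cC$. In the first case $P^t\cA P = 0 \in \square_{q^2}$ (or is excluded by convention). In the second case $P^t\cA P \in \fq^*$, and I invoke the elementary fact that $\fq^* \subseteq (\fqs^*)^2$, which follows since $[\fqs^*:(\fqs^*)^2]=2$ and $\fq^*$, being of odd index $q+1$ inside the cyclic group $\fqs^*$ when restricted to the $2$-part, is contained in the squares — concretely, $x \in \fq^*$ means $x^{q-1}=1$, so $x = (x^{(q+1)/2})^{q-1}\cdot x$... more cleanly, writing $\fqs^* = \la \omega \ra$, we have $\fq^* = \la \omega^{q+1}\ra$ and $q+1$ is even, so $\omega^{q+1}$ is a square. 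Hence $P^t\cA P \in \square_{q^2}$ in all cases.

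There is essentially no obstacle here; the statement is a warm-up observation. The only point requiring a word of care is the convention about whether $0$ counts as a square (i.e.\ whether points of $\cC$ itself are included in the statement), and the clean identification of $\square_{q^2} \cap \fq$ with all of $\fq$ — but this is immediate from the cyclic structure of $\fqs^*$ and the evenness of $q+1$. The lemma will then serve, in the sections that follow, to force \emph{external} points of $\cC$ (when $\cC$ is defined over $\fq$) to have $P^t\cA P$ a square, so that the equation $X^t\cA X = \vartheta^2$ captures exactly the external points and yields $E_q(\cC)=q^2$.
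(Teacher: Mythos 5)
Your proposal is correct and follows essentially the same route as the paper: the paper's proof is the one-liner that $P^t\cA P$ lies in $\fq$ and hence is a square in $\fqs$, and you simply supply the (standard) justification via the cyclic structure of $\fqs^*$ and the evenness of $q+1$. No further comment is needed.
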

	\begin{proof}
		$P^t\cA P$ is an element of $\fq$ and so a square of $\fqs$.
	\end{proof}
	
	\begin{thm}
		Let $\cC$ be a conic defined over $\fq$. The number $E_q(\cC)$ of external points to $\cC$ in $\PG(2,q^2)$ which lie in $\PG(2,q)$ is $q^2$.
	\end{thm}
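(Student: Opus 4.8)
The plan is to show that all $q^2$ points of $\PG(2,q)$ that do not lie on $\cC$ are external to $\cC$, and then read off $E_q(\cC)=q^2$ by a trivial count. First I would record the setup: since $\cC$ is defined over $\fq$ we may take its matrix $\cA$ with entries in $\fq$, and as $\det\cA\neq 0$ the restriction of $F$ to $\PG(2,q)$ is a nondegenerate ternary quadratic form over $\fq$. Hence $\cC\cap\PG(2,q)$ is an irreducible conic of $\PG(2,q)$ with exactly $q+1$ points, so $\PG(2,q)\setminus\cC$ consists of exactly $q^2$ points, each of which (by the external/internal/on trichotomy in $\PG(2,q^2)$) is external or internal to $\cC$. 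It therefore suffices to show that none of these $q^2$ points is internal.

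Next I would exhibit at least one external point of $\cC$ lying in $\PG(2,q)$, in order to pin down which of the two alternatives of Lemma~\ref{lemA1} actually occurs. Choose $R\in\cC\cap\PG(2,q)$ (possible by the previous step, a smooth $\fq$-conic having $q+1$ rational points), and let $t_R\colon R^t\cA X=0$ be the tangent line to $\cC$ at $R$; since $R$ and $\cA$ are defined over $\fq$, so is $t_R$. For any point $P\in t_R$ with $P\neq R$ we have $P\notin\cC$ (because $t_R\cap\cC=\{R\}$) while $P$ lies on the tangent line $t_R$; hence $P$ is not internal, so $P$ is external. Thus the $q$ points of $t_R\cap\PG(2,q)$ distinct from $R$ are external to $\cC$, giving in particular $E_q(\cC)\geq q>0$.

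Finally I would combine this with Lemmas~\ref{L1} and~\ref{lemA1}. Fix one of the external points $P\in\PG(2,q)$ just produced. By Lemma~\ref{L1}, $P^t\cA P\in\square_{q^2}$, so by Lemma~\ref{lemA1} (the ``external $\Rightarrow$ square'' branch) every internal point $Q$ of $\cC$ satisfies $Q^t\cA Q\notin\square_{q^2}$. On the other hand Lemma~\ref{L1} gives $(P')^t\cA P'\in\square_{q^2}$ for every $P'\in\PG(2,q)$. Hence no point of $\PG(2,q)$ can be internal to $\cC$, and therefore all $q^2$ points of $\PG(2,q)\setminus\cC$ are external, i.e. $E_q(\cC)=q^2$.

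The only step that is more than bookkeeping is the construction of a single $\fq$-rational external point: one must invoke that a smooth $\fq$-conic has an $\fq$-rational point (automatic over a finite field) and verify that a point of its tangent line other than the contact point genuinely falls in the external class of the trichotomy taken over $\PG(2,q^2)$. Once that one external point is secured, Lemma~\ref{lemA1} propagates the square condition and eliminates every internal point of $\PG(2,q)$ simultaneously, which is the crux of the argument.
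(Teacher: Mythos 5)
Your proposal is correct and follows essentially the same route as the paper: both count the $q+1$ rational points of the $\fq$-conic, use Lemma \ref{L1} together with Lemma \ref{lemA1} to conclude that the remaining $q^2$ points of $\PG(2,q)$ are all external or all internal, and then exhibit an external point in $\PG(2,q)$ on the $\fq$-rational tangent line at a rational point of $\cC$ to rule out the internal alternative. Your write-up is just a more explicit version of the paper's argument, so nothing further is needed.
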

	\begin{proof}
		Any irreducible conic defined over $\fq$ has $q+1$ points over $\fq$. From Lemma \ref{L1} the remaining points of $\PG(2,q)$ are either all external or all internal to the conic $\cC$. Let $t_P$ be the $\fq$-rational tangent to $\cC$ at an $\fq$-rational point $P$. Now any other point of $t_P$ defined over $\fq$ is an external point to $\cC$ (note that this set is non-empty as $t_P$ is defined over $\fq$). In particular, this means that any point of $\PG(2,q)$ is either on the conic or is external to the conic. Since $|\PG(2,q)|=q^2+q+1$ points, this implies that there are other $q^2$ external points.
	\end{proof}

	\section{Conics with at least one point in $\PG(2,q)$.}
	Up to a change of the reference system, we may assume that $\cC$ contains the point $(0:1:0)$. Then $\cC$ has equation
	\begin{equation}\label{conic1}
		\cC \colon aX^2+bXY+cXZ+dYZ+eZ^2=0
	\end{equation}
	with $a,b,c,d,e  \in \fqs$ where either $b \neq 0$ or $d \neq 0$. From now on we may assume $b \ne 0$. In case $b=0$ we can apply the collineation $(X:Y:Z)\mapsto(Z:Y:X)$ which swaps $b$ and $d$.
	\begin{lem} \label{L2}
		If $P$ runs over the set of all external points to $\cC$ then the values $P^t\cA P$ are all squares or all non-squares according as $-b c d + a d^2 + b^2e$ is a square or a non-square in $\mathbb{F}_{q^2}$.
	\end{lem}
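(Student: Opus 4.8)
The plan is to produce a single explicit external point $P$ for which $P^t\cA P$ can be computed in closed form, and then to invoke Lemma~\ref{lemA1}, which guarantees that the quadratic character of $P^t\cA P$ does not depend on the choice of the external point $P$. The natural candidates are the points lying on the tangent line to $\cC$ at $P_0=(0:1:0)\in\cC$: since $\cC$ is irreducible and $q$ is odd, every point off $\cC$ is either external or internal, and an internal point lies on no tangent line of $\cC$; hence every point of $t_{P_0}\setminus\{P_0\}$ is external to $\cC$, and such points certainly exist.

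Concretely, I would first write the matrix of the conic \eqref{conic1},
\[\cA=\begin{pmatrix} a & \frac b2 & \frac c2\\[2pt] \frac b2 & 0 & \frac d2\\[2pt] \frac c2 & \frac d2 & e\end{pmatrix},\]
and compute the tangent $t_{P_0}$, i.e.\ the polar line $X^t\cA P_0=0$ of $P_0=(0:1:0)$. Since $\cA P_0=\bigl(\frac b2,0,\frac d2\bigr)^t$, this line has equation $bX+dZ=0$; as $b\neq0$, it is spanned by $P_0$ and $R=(-d:0:b)$, so every point of $t_{P_0}$ other than $P_0$ has the form $P=\lambda P_0+\mu R=(-\mu d:\lambda:\mu b)$ with $\mu\neq0$.

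Then I would substitute $P=(-\mu d:\lambda:\mu b)$ into $F$; the two terms linear in $\lambda$ (coming from $bXY$ and $dYZ$) cancel, leaving
\[P^t\cA P=\mu^2\bigl(ad^2-bcd+b^2e\bigr).\]
(Incidentally $ad^2-bcd+b^2e=-4\det\cA$, which is nonzero because $\cC$ is nonsingular — a reassuring sanity check, as external points cannot satisfy $P^t\cA P=0$.) Since $\mu^2$ is a nonzero square of $\fqs$, the element $P^t\cA P$ is a square in $\mathbb F_{q^2}$ precisely when $-bcd+ad^2+b^2e$ is a square. As $P$ is an external point, Lemma~\ref{lemA1} transfers this conclusion to every external point of $\cC$, which is exactly the claimed statement.

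The computation itself is entirely routine; the only step that needs a word of justification is the assertion that the points of a tangent line other than the point of tangency are external, which rests on the external/internal dichotomy valid for $q$ odd together with the observation that an internal point lies on no tangent line.
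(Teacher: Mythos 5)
Your proposal is correct and follows essentially the same route as the paper: both pick an external point on the tangent line $bX+dZ=0$ at $(0:1:0)$ (the paper uses the specific point $(-d/b:0:1)$, you parametrize the whole line), compute $P^t\cA P=\mu^2(ad^2-bcd+b^2e)$, and conclude via Lemma~\ref{lemA1}. Your write-up merely adds details the paper leaves implicit, namely the justification that non-tangency points of a tangent line are external and the sanity check $ad^2-bcd+b^2e=-4\det\cA\neq 0$.
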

	\begin{proof}
		Since the tangent line to $\cC$ at $Q=(0:1:0)$ has equation $bX+dZ=0$, the point $P=(-d/b:0:1)$ of $t_Q$ is external to $\cC$.
		We have $ P^T\cA P= (-bcd + ad^2 + b^2e)$. Now, Lemma \ref{L2} follows from Lemma \ref{lemA1} .
	\end{proof}
	
	\begin{obs} \label{rk:quad}
		Without loss of generality we can always suppose $-bcd + ad^2 + b^2e \in \square_{q^2}$. Indeed, if $-bcd + ad^2 + b^2e=\alpha\gamma^2$, with $\alpha \notin \square_{q^2}$, we only need to multiply by $\alpha$ the equation of $\cC$.
	\end{obs}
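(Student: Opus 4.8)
The plan is to use the obvious fact that the conic $\cC$, viewed as a point set of $\PG(2,q^2)$, depends only on the projective class of its defining form: replacing $F$ by $\alpha F$ for any $\alpha\in\fqs\setminus\{0\}$ yields the same curve, hence the same partition of $\PG(2,q)$ into points of $\cC$, external points and internal points, and therefore the same value $E_q(\cC)$. The only effect of such a rescaling is that all coefficients $a,b,c,d,e$ in \eqref{conic1} are multiplied by $\alpha$, and the Gram matrix becomes $\alpha\cA$. So it suffices to record how the quantity $\Delta:=-bcd+ad^2+b^2e$ transforms and to choose $\alpha$ suitably.

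Each of the three monomials $bcd$, $ad^2$, $b^2e$ is homogeneous of degree $3$ in the coefficients, so under $(a,b,c,d,e)\mapsto(\alpha a,\alpha b,\alpha c,\alpha d,\alpha e)$ we get $\Delta\mapsto\alpha^3\Delta$. Now assume $\Delta\notin\square_{q^2}$ (otherwise there is nothing to prove) and write $\Delta=\alpha\gamma^2$ with $\alpha\notin\square_{q^2}$ and $\gamma\in\fqs\setminus\{0\}$; this is possible because $q$ is odd, so $\square_{q^2}$ has index $2$ in $\fqs\setminus\{0\}$ and one may even take $\gamma=1$, $\alpha=\Delta$. Multiplying the equation of $\cC$ by this $\alpha$ replaces $\Delta$ by $\alpha^3\cdot\alpha\gamma^2=\alpha^4\gamma^2=(\alpha^2\gamma)^2\in\square_{q^2}$, which is the desired normalisation. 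One can also check consistency with Lemma \ref{L2}: since $\cA\mapsto\alpha\cA$, every value $P^t\cA P$ at an external point $P$ gets multiplied by the non-square $\alpha$, so ``all squares'' and ``all non-squares'' are interchanged — exactly as the quadratic character of $\Delta$ is flipped.

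There is essentially no obstacle here: the whole content of the remark is that $\Delta$ has \emph{odd} degree (namely $3$) in the coefficients of the conic, so multiplication of the equation by a non-square of $\fqs$ flips the quadratic character of $\Delta$ and can therefore always be used to force $\Delta\in\square_{q^2}$. The only minor points to state carefully are that rescaling $F$ does not change the underlying curve (hence $E_q(\cC)$ is genuinely unaffected) and that a non-square $\alpha$ exists, both of which are immediate since $q$ is odd; these are the facts one invokes, rather than a genuine difficulty.
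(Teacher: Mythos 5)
Your proof is correct and follows essentially the same route as the paper: rescaling the equation of $\cC$ by $\alpha$ leaves the point set (hence $E_q(\cC)$) unchanged, while the degree-$3$ homogeneity of $-bcd+ad^2+b^2e$ in the coefficients turns $\alpha\gamma^2$ into $\alpha^4\gamma^2=(\alpha^2\gamma)^2\in\square_{q^2}$, which is exactly the computation the remark's one-line justification relies on. Your added consistency check with Lemma \ref{L2} is a nice touch but not a departure from the paper's argument.
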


	Equation $ X^t\cA X = \vartheta^2 $ with $ \vartheta \in  \fqs $ can be rewritten over $ \fq $ as $ \fqs $ is a finite extension of $ \fq $, that is, the elements of $ \fqs $ are of the form $z=z_1+\epsilon z_2$ with $ z_1,z_2 \in \fq $ where $\epsilon \in \fqs$ is a root of an irreducible polynomial $  p(X) = X^2 -\omega $ over $ \fq $. Since the
	other root of $ p(X) $ is $\epsilon^q$, we have $\epsilon+\epsilon^q=0$. Thus, $X^t\cA X = \vartheta^2 $ reads over $ \fq $:
	\begin{equation} \label{sistem1con}
		\begin{cases}
			\begin{aligned}
				&a_1X^2+b_1XY+c_1XZ+d_1YZ+e_1Z^2=t_1^2+ \omega t_2^2 \\
				&a_2X^2+b_2XY+c_2XZ+d_2YZ+e_2Z^2=2t_1t_2
			\end{aligned}
		\end{cases}
	\end{equation}
	where $a=a_1+\epsilon a_2$, $b=b_1+\epsilon b_2$, $c=c_1+\epsilon c_2$, $d=d_1+\epsilon d_2$, $e=e_1+\epsilon e_2$, $\vartheta=t_1+ \epsilon t_2$ and $\omega=\epsilon^2$. Since we have $b \ne0$ or $d \ne 0$, we can assume $d_2 \ne 0$ or $b_2 \ne 0$ without loss of generality.
	From the second equation then
	\begin{equation}\label{eq:y1}
		Y=\frac{-e_2Z^2 + 2 t_1 t_2 - c_2 XZ - a_2 X^2}{d_2Z + b_2X}.
	\end{equation}
	Note that we lose the point $(0:1:0)$.
	Substituting $Y$ by the expression on the right hand side gives
	\begin{equation} \label{eq:cubic1}
		2 t_1 t_2( b_1 X + d_1Z) -(t_1^2+\omega t_2^2) (b_2 X+d_2Z )+ AX^3+BX^2Z+C XZ^2+D Z^3=0,
	\end{equation}
	where $A=-a_2 b_1 + a_1 b_2$, $B=b_2 c_1 - b_1 c_2 - a_2 d_1 + a_1 d_2$, $C=-c_2 d_1 + c_1 d_2 + b_2 e_1 - b_1 e_2$, $D=d_2 e_1 - d_1 e_2$ and $\omega=\epsilon^2$ is a non-square of $\fq$.
	Note that Equation \eqref{eq:cubic1} is equivalent to:
	\begin{equation*} \label{eq:seconda}
		(2 t_1 t_2-(a_2 X^2 + c_2 X Z + e_2 Z^2))( b_1 X + d_1Z)
		+(a_1 X^2 + c_1 X Z + e_1 Z^2-t_1^2-\omega t_2^2) (b_2 X+d_2Z )=0.
	\end{equation*}
	\begin{obs}\label{ob:EandS}
		The number of solutions $(X:Y:Z)$ of System \eqref{sistem1con} can be obtained (but it is not necessarily equal) by counting the points over $ \fq $ lying on the cubic surface $\cS \colon F(t_1,t_2,X,Z)=0$ of $\PG(3,q)$ with homogeneous equation \eqref{eq:cubic1}. Here $\PG(3,q)$ stands for the projective space over $\fq$ with homogeneous coordinates $ (t_1,t_2,X,Z) $.
	\end{obs}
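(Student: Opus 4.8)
The plan is to set up an explicit dictionary between the solutions of System~\eqref{sistem1con} and the $\fq$-rational points of $\cS$, valid away from a small closed locus, so that $E_q(\cC)$ is recovered from the number of $\fq$-points of $\cS$ after an explicit correction --- which is exactly what the hedged phrasing records. First I would recall the reduction already in place: by Lemma~\ref{lemA1}, Lemma~\ref{L2} and Observation~\ref{rk:quad}, a point $(X:Y:Z)\in\PG(2,q)$ is external to $\cC$ precisely when $X^t\cA X$ is a non-zero square of $\fqs$, that is, when $X^t\cA X=\vartheta^2$ for some $\vartheta=t_1+\epsilon t_2$ with $(t_1,t_2)\neq(0,0)$; by construction this happens iff System~\eqref{sistem1con} admits a solution with $(t_1,t_2)\neq(0,0)$, and then it admits exactly two of them, namely $\pm(t_1,t_2)$, since $\vartheta^2=X^t\cA X$ has two square roots.

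Next I would build the correspondence on the open set $U\colon b_2X+d_2Z\neq0$ of $\PG(3,q)$ with coordinates $(t_1:t_2:X:Z)$. On $U$ the second equation of \eqref{sistem1con} determines $Y$ uniquely through \eqref{eq:y1}. Writing $E_1=0$ and $E_2=0$ for the first and second equations of \eqref{sistem1con} with all terms moved to the left, a direct computation --- in which the $Y$-terms cancel --- shows that the left-hand side of \eqref{eq:cubic1} equals $E_1\cdot(b_2X+d_2Z)-E_2\cdot(b_1X+d_1Z)$. Hence, on $U$, once $E_2=0$ holds (equivalently, once $Y$ is given by \eqref{eq:y1}), equation \eqref{eq:cubic1} is equivalent to $E_1=0$. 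Therefore the map sending $(t_1:t_2:X:Z)\in\cS(\fq)\cap U$ to $(X:Y:Z)$ with $Y$ as in \eqref{eq:y1} is a bijection onto the set of solutions of System~\eqref{sistem1con} lying over $U$, with inverse the elimination of $Y$.

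Finally I would account for what lies off $U$; this is where equality breaks. The points of $\cS$ with $(t_1,t_2)=(0,0)$ are cut out by the binary form $AX^3+BX^2Z+CXZ^2+DZ^3$ in $(X:Z)$, hence number at most three (unless $A=B=C=D=0$, when they fill a line); under the dictionary they correspond to $\vartheta=0$, i.e.\ to points of $\cC$, and so must be removed. Dually, external points of $\PG(2,q)$ lying on the line $b_2X+d_2Z=0$ are not visible on $\cS$ and must be restored, while the point $(0:1:0)\in\cC$ lost in passing to \eqref{eq:y1} is harmless, not being external. Denoting by $\nu_0$ the number of $\fq$-points of $\cS$ with $(t_1,t_2)=(0,0)$ and by $\varepsilon_0$ the number of external points on $b_2X+d_2Z=0$, the dictionary yields $\#\cS(\fq)=2\bigl(E_q(\cC)-\varepsilon_0\bigr)+\nu_0$, i.e.\ $E_q(\cC)=\tfrac12\bigl(\#\cS(\fq)-\nu_0\bigr)+\varepsilon_0$. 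The main obstacle I anticipate is precisely the bookkeeping of these two corrections --- reconciling the $\pm\vartheta$ doubling with projective rescaling and classifying the points of $\cS$ over the line $b_2X+d_2Z=0$ and over the degenerate fibre $(t_1,t_2)=(0,0)$ --- since the birational part reduces to the single polynomial identity above.
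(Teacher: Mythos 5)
Your central computation is right and is exactly what underlies the remark: the identity expressing the left-hand side of \eqref{eq:cubic1} as $E_1\cdot(b_2X+d_2Z)-E_2\cdot(b_1X+d_1Z)$ (this is the paper's unnumbered rewriting just after \eqref{eq:cubic1}), and the resulting bijection between $\cS(\fq)\cap U$ and the solutions of System~\eqref{sistem1con} lying over $U$. The paper offers no proof of this observation and defers the exact count to Corollary~\ref{th:sigmaq1} and the lemma preceding Lemma~\ref{lm:n0inf}, so the real test of your write-up is whether your correction terms are the right ones. They are not, in both directions.

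First, an external point $(X:Y:Z)$ with $b_2X+d_2Z=0$ \emph{is} visible on $\cS$. In the irreducible case $b_1d_2-b_2d_1\neq 0$, so $b_1X+d_1Z\neq 0$ at such a point; the second equation of \eqref{sistem1con} becomes $Y$-free, the \emph{first} equation now determines $Y$, and your own identity shows that the two points $(\pm t_1:\pm t_2:X:Z)$ lie on $\cS$. These points are therefore already counted (twice each) in $\#\cS(\fq)$, and your term $+\varepsilon_0$ double-corrects: nothing needs to be restored. Second, the cubic \eqref{eq:cubic1} vanishes identically on the line $X=Z=0$, so $\cS$ contains the $q+1$ points $(t_1:t_2:0:0)$; none of them corresponds to a solution of the system, since the underlying plane point would be $(0:1:0)\in\cC$, for which $X^t\cA X=0$ forces $\vartheta=0$. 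These $q+1$ spurious points must be subtracted, and your formula omits them entirely (your ``off $U$'' discussion conflates the locus $t_1=t_2=0$, most of which lies \emph{inside} $U$, with the complement of $U$). The correct relation in the irreducible case is $\#\cS(\fq)=2E_q(\cC)+n_0+(q+1)$, i.e.\ $E_q(\cC)=\tfrac12(\cS_q-n_0-q-1)$ as in Corollary~\ref{th:sigmaq1}; your formula $E_q(\cC)=\tfrac12(\#\cS(\fq)-\nu_0)+\varepsilon_0$ exceeds it by $\varepsilon_0+\tfrac{q+1}{2}$. Finally, when $b_1d_2=b_2d_1$ the two linear forms are proportional, a whole plane sits inside $\cS$, and the dictionary degenerates; that case needs the separate treatment of Section~\ref{s:red}, and together with $n_0$ and $n_\infty$ it is what the hedge ``not necessarily equal'' is actually covering.
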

	
	\begin{obs}
		Note that the conic $\cC$ of equation \eqref{conic1} is defined over $\fq$ if and only if the following hold:
		\[a_1 b_2 = a_2 b_1, \quad c_1 b_2 =c_2b_1, \quad d_1 b_2 =d_2b_1, \quad e_1 b_2 =e_2b_1. \]
	\end{obs}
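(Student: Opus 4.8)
The plan is to repackage the four displayed identities as a single condition expressed through the Frobenius $x\mapsto\bar x:=x^q$ of $\fqs/\fq$, and then settle each implication by clearing one denominator. Since $\epsilon+\epsilon^q=0$ we have $\bar\epsilon=-\epsilon$, so for any $x=x_1+\epsilon x_2\in\fqs$ a one-line expansion gives
\[
x\,\bar b \;=\; (x_1+\epsilon x_2)(b_1-\epsilon b_2) \;=\; \bigl(x_1 b_1-\omega x_2 b_2\bigr) \;+\; \epsilon\,\bigl(x_2 b_1-x_1 b_2\bigr),
\]
whence $x\bar b\in\fq$ exactly when $x_1 b_2=x_2 b_1$. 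Running $x$ over $a,c,d,e$, the first step is therefore to record that the system in the statement is equivalent to $a\bar b,\,c\bar b,\,d\bar b,\,e\bar b\in\fq$; here $b\bar b=\N(b)\in\fq^\ast$ holds for free, and is nonzero since $b\neq 0$.

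Granting this, the ``if'' direction is immediate: multiplying the equation \eqref{conic1} of $\cC$ by $\bar b\in\fqs^\ast$ produces the proportional quadratic form $(a\bar b)X^2+(b\bar b)XY+(c\bar b)XZ+(d\bar b)YZ+(e\bar b)Z^2$, all of whose coefficients now lie in $\fq$, and it cuts out $\cC$; hence $\cC$ is defined over $\fq$. For the ``only if'' direction I would invoke the standard fact that a nondegenerate conic determines its quadratic form up to a nonzero scalar — which is the sense in which ``defined over $\fq$'' is used throughout the excerpt, cf.\ Observation \ref{rk:quad} — so that $\cC$ defined over $\fq$ yields $\lambda\in\fqs^\ast$ with $\lambda a,\lambda b,\lambda c,\lambda d,\lambda e\in\fq$. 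As $b\neq 0$ we get $\lambda b\in\fq^\ast$, hence $a/b,c/b,d/b,e/b\in\fq$, and multiplying each of these by $\N(b)=b\bar b\in\fq^\ast$ returns $a\bar b,c\bar b,d\bar b,e\bar b\in\fq$, i.e.\ the asserted identities.

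I do not anticipate any genuine obstacle: this is a short, essentially formal verification, and the only point that deserves a word of care is the passage between ``$\cC$ is defined over $\fq$'' and ``the coefficient tuple $(a:b:c:d:e)$ is $\fqs$-proportional to a tuple over $\fq$''. One may also avoid conjugation altogether: the four identities say precisely that each of $(a_1,a_2),(c_1,c_2),(d_1,d_2),(e_1,e_2)$ is an $\fq$-multiple of $(b_1,b_2)$ (two vectors of $\fq^2$ with vanishing determinant, one of them nonzero since $b\neq 0$, are proportional), hence $a,c,d,e\in\fq\,b$, i.e.\ $(a:b:c:d:e)\in\PG(4,q)\subset\PG(4,q^2)$; the computation with $\bar b$ above is simply the slickest bookkeeping of the same fact.
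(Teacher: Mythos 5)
Your proof is correct. The paper states this as an unproven remark, so there is no argument to compare against; your verification — that the four determinant conditions say exactly that $a\bar b, c\bar b, d\bar b, e\bar b\in\fq$ (equivalently that $(a_1,a_2),(c_1,c_2),(d_1,d_2),(e_1,e_2)$ are $\fq$-proportional to $(b_1,b_2)\neq(0,0)$), combined with the uniqueness up to scalar of the quadratic form defining an irreducible conic — is complete and is the natural way to justify the claim. The one step you rightly flag, identifying ``defined over $\fq$'' with ``coefficient tuple $\fqs$-proportional to one over $\fq$,'' is indeed the sense in which the paper uses the phrase, so nothing is missing.
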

	
	\begin{lem}
		With the notation above, if $ (A,B,C,D) = (0,0,0,0)$ then $\cC$ is a singular conic.
	\end{lem}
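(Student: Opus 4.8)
The plan is to reinterpret the quadruple $(A,B,C,D)$ as the coefficient vector of a single cubic form in $X,Z$ and to read a factorisation of $\cC$ directly off its vanishing. Write $\cC=\cC_1+\epsilon\,\cC_2$, where $\cC_i$ is the (possibly degenerate) conic over $\fq$ with coefficient tuple $(a_i,b_i,c_i,d_i,e_i)$, and split each summand according to its dependence on $Y$:
\[\cC_i=Y\,L_i+Q_i,\qquad L_i=b_iX+d_iZ,\qquad Q_i=a_iX^2+c_iXZ+e_iZ^2 .\]
Comparing \eqref{eq:cubic1} with the equivalent formulation displayed immediately after it (or, just as quickly, matching the four monomials $X^3,X^2Z,XZ^2,Z^3$) gives the identity in $\fq[X,Z]$
\[AX^3+BX^2Z+CXZ^2+DZ^3=L_2Q_1-L_1Q_2 .\]
Thus the hypothesis $(A,B,C,D)=(0,0,0,0)$ is equivalent to the single polynomial identity $L_2Q_1=L_1Q_2$, and the whole argument reduces to analysing this.

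Here $L_2\neq 0$, since we have arranged $b_2\neq 0$ or $d_2\neq 0$; recall also that, by the observation above, $\cC$ being defined over $\fq$ is equivalent to the tuples $(a_1,\dots,e_1)$ and $(a_2,\dots,e_2)$ being $\fq$-proportional, which we exclude (this is the case relevant to Theorem~\ref{th:2}). First, $L_1\neq 0$: otherwise $L_2Q_1=0$ forces $Q_1=0$, hence $\cC_1=0$ and $\cC=\epsilon\,\cC_2$ would be defined over $\fq$. Now $L_1$ is a nonzero linear -- hence irreducible -- element of the unique factorisation domain $\fq[X,Z]$ dividing $L_2Q_1$, so $L_1\mid L_2$ or $L_1\mid Q_1$. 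If $L_1\mid L_2$, then $L_2=\lambda L_1$ with $\lambda\in\fq^{*}$, and cancelling $L_1$ in $L_2Q_1=L_1Q_2$ yields $Q_2=\lambda Q_1$; then $\cC_2=\lambda\cC_1$ and $\cC=(1+\epsilon\lambda)\cC_1$, again defined over $\fq$. Both possibilities are excluded.

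There remains $L_1\mid Q_1$. Write $Q_1=L_1M$ with $M$ a linear form over $\fq$ (possibly $M=0$); cancelling $L_1$ in $L_2Q_1=L_1Q_2$ gives $Q_2=L_2M$ with the same $M$, and therefore
\[\cC=(Y\,L_1+L_1M)+\epsilon\,(Y\,L_2+L_2M)=(Y+M)\,(L_1+\epsilon\,L_2)=(Y+M)(bX+dZ),\]
with $bX+dZ\neq 0$ because $b\neq 0$. Hence $\cC$ is a product of two linear forms, i.e.\ reducible, so its matrix $\cA$ is singular and $\det\cA=0$; that is, $\cC$ is a singular conic, as claimed. I do not expect a genuine obstacle here: the only point needing care is the bookkeeping that isolates this last, genuinely new, case from the two degenerate subcases -- and those subcases turn out to be exactly the $\fq$-rational conics, which lie outside the scope of this section, so nothing more is required.
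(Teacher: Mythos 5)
Your proof is correct, and it takes a genuinely different route from the paper's. The paper normalises $d=1$ (so $d_1=1$, $d_2=0$, which forces $b_2\neq0$), reads off from $D=C=B=A=0$ the relations $e_2=0$, $c_2=b_2e_1$, $a_2=b_2c_1-b_1c_2$, $a_1=a_2b_1/b_2$, and substitutes them into the explicit formula for $\det\cA$ to check that it vanishes. You instead prove the identity $AX^3+BX^2Z+CXZ^2+DZ^3=L_2Q_1-L_1Q_2$ (which is exactly the $t$-free part of the reformulation following \eqref{eq:cubic1}, and which I have verified coefficient by coefficient) and run a divisibility argument in the UFD $\fq[X,Z]$ to conclude that $\cC=(Y+M)(bX+dZ)$ splits into two lines. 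Your version buys several things: it requires no normalisation of $d$ (in particular it covers $d=0$, where the paper's computation does not literally apply); it gives the stronger structural conclusion that $\cC$ degenerates into the tangent line $bX+dZ=0$ at $(0:1:0)$ together with a second line; and it surfaces the one genuine exception --- $\cC$ an $\fqs$-scalar multiple of a conic defined over $\fq$, for which $(A,B,C,D)=(0,0,0,0)$ holds while $\cC$ may be non-singular --- which the paper's argument excludes only implicitly through the normalisation $d=1$, $b_2\neq0$. Since the lemma as printed does not carry the hypothesis ``$\cC$ not defined over $\fq$ up to scalar,'' you should state that exclusion explicitly as an assumption rather than as an aside, but with that caveat your argument is complete and, if anything, more robust than the original.
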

	\begin{proof}
		The determinant of the matrix associated to the polynomial \eqref{eq:cubic1} defining $\cC$ is 
		\[\frac{1}{4}(-a_1 - 
		a_2 \epsilon + (b_1 + b_2 \epsilon) (c_1 + c_2 \epsilon - (b_1 + b_2 \epsilon) (e_1 + e_2 \epsilon))),\]
		where we write every element $z$ of $\fqs$ as $z=z_1+\epsilon z_2$ with $ z_1,z_2 \in \fq $ where $\epsilon \in \fqs$ is a root of an irreducible polynomial $  p(X) = X^2 + \beta $ over $ \fq $.
		Since $d=1$ and $D=0$ we have that $e_2=0$. Furthermore, $ A=0 $, $ C=0 $ and $B=0$ imply respectively:
		\begin{equation}\label{eq:Csing}
			a_1=\frac{a_2 b_1}{b_2}, \quad  c_2 = b_2 e_1 \mbox{ and } a_2=b_2 c_1 -b_1 c_2.
		\end{equation}
		Then $\det(\cA)=0$.
	\end{proof}
	\begin{obs}
		Since by hypothesis the conic $\cC$ is non-singular, we cannot have $ (A,B,C,D) = (0,0,0,0)$.
	\end{obs}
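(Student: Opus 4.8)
The plan is to recognise the binary cubic $AX^{3}+BX^{2}Z+CXZ^{2}+DZ^{3}$ as (the negative of) one of the two $\fq$-components of a product of forms over $\fqs$, and then to extract the conclusion from a single substitution. I would first put
\[
g(X,Z)=aX^{2}+cXZ+eZ^{2},\qquad \ell(X,Z)=bX+dZ,
\]
so that the left-hand side of \eqref{conic1} is $g+Y\ell$. Splitting the coefficients over $\fq$ as $\xi=\xi_{1}+\epsilon\xi_{2}$ one has $g=g_{1}+\epsilon g_{2}$ and $\ell=\ell_{1}+\epsilon\ell_{2}$ with $g_{i}=a_{i}X^{2}+c_{i}XZ+e_{i}Z^{2}$, $\ell_{i}=b_{i}X+d_{i}Z$, and a comparison of coefficients with the defining formulas for $A,B,C,D$ gives at once
\[
AX^{3}+BX^{2}Z+CXZ^{2}+DZ^{3}=g_{1}\ell_{2}-g_{2}\ell_{1}.
\]
Writing $\bar\xi:=\xi^{q}$ for the conjugation over $\fq$ (so $\bar\epsilon=-\epsilon$) and $\bar g,\bar\ell$ for the forms obtained by conjugating coefficients, an expansion gives $g\,\bar\ell=(g_{1}\ell_{1}-\omega g_{2}\ell_{2})+\epsilon\,(g_{2}\ell_{1}-g_{1}\ell_{2})$, so the $\epsilon$-component of $g\,\bar\ell$ is exactly $-(AX^{3}+BX^{2}Z+CXZ^{2}+DZ^{3})$. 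Hence
\[
(A,B,C,D)=(0,0,0,0)\iff g\,\bar\ell\in\fq[X,Z]\iff g\,\bar\ell=\bar g\,\ell \ \text{ in }\ \fqs[X,Z].
\]

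Granting the polynomial identity $g\,\bar\ell=\bar g\,\ell$, I would substitute $(X,Z)=(-d,b)$: since $b\neq0$ this is a well-defined point of $\PG(1,q^{2})$, and it is a zero of $\ell$, so the identity yields $g(-d,b)\,\bar\ell(-d,b)=0$ and one of the two factors vanishes. If $g(-d,b)=0$, a short computation from the matrix $\cA$ gives $g(-d,b)=ad^{2}-bcd+b^{2}e=-4\det\cA$, hence $\det\cA=0$ and $\cC$ is a singular conic, which is what we want. If instead $\bar\ell(-d,b)=0$, then the linear form $\bar\ell=\bar bX+\bar dZ$, whose only zero is $(-\bar d:\bar b)$, must vanish at $(-d:b)$ as well; this forces $(-\bar d:\bar b)=(-d:b)$, i.e.\ $d/b\in\fq$. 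Writing $d=\delta b$ with $\delta\in\fq$ one then gets $\bar\ell=(\bar b/b)\ell$, and cancelling $\ell$ in $g\,\bar\ell=\bar g\,\ell$ (legitimate since $\fqs[X,Z]$ is a domain and $\ell\neq0$) leaves $\bar g=(\bar b/b)g$; reading off coefficients, and using that $b$ and $d$ satisfy the same relation, we obtain $\xi^{q}=(\bar b/b)\,\xi$, hence $\bar b\,\xi\in\fq$, for every $\xi\in\{a,b,c,d,e\}$. Thus multiplying the equation of $\cC$ by $\bar b$ yields an equation with all coefficients in $\fq$, and $\cC$ is defined over $\fq$.

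In the setting of interest $\cC$ is a non-singular conic that is not defined over $\fq$ (the $\fq$-rational conics being exactly the maximal case settled in Section~\ref{sec:max}), so the second alternative cannot occur and $\cC$ must be singular; equivalently, a change of reference fixing $(0:1:0)$ may be carried out in advance so as to arrange $d\neq0$ and $b\notin\fq$, which excludes $\bar\ell(-d,b)=0$ outright. I expect the only genuinely delicate point to be the bookkeeping behind the clean identity relating $AX^{3}+BX^{2}Z+CXZ^{2}+DZ^{3}$ to the $\epsilon$-component of $g\,\bar\ell$, together with the observation that $(A,B,C,D)=(0,0,0,0)$ forces the dichotomy: either $\cC$ is singular, or $\cC$ is defined over $\fq$. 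Once these are in hand the rest is a single evaluation and a short conjugation argument. A purely computational route is also available — normalise $d=1$, feed the relations imposed by $A=B=C=0$ into $\det\cA$, and check that it vanishes — but it is less transparent and needs the extra hypotheses $d\neq0$ and $b\notin\fq$ to be supplied separately.
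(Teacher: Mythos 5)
Your argument is correct, and it is genuinely different from (and more illuminating than) the paper's. The paper derives the remark from the preceding lemma, which it proves by brute force: it normalizes $d=1$, solves $A=B=C=D=0$ for $e_2$, $c_2$, $a_2$, $a_1$ (dividing by $b_2$ along the way), and substitutes into $\det\cA$ to get $0$. You instead package $AX^3+BX^2Z+CXZ^2+DZ^3$ as (minus) the $\epsilon$-component of $g\,\bar\ell$ with $g=aX^2+cXZ+eZ^2$ and $\ell=bX+dZ$ (the identity $g_1\ell_2-g_2\ell_1=AX^3+BX^2Z+CXZ^2+DZ^3$ checks out against the paper's formulas for $A,B,C,D$), so that $(A,B,C,D)=(0,0,0,0)$ becomes the single identity $g\,\bar\ell=\bar g\,\ell$; evaluating at the zero $(-d:b)$ of $\ell$ then gives $g(-d,b)=-4\det\cA=0$ or $d/b\in\fq$, the latter forcing $\cC$ to be defined over $\fq$ after rescaling by $\bar b$. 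Your approach buys two things: it dispenses with the normalization $d=1$ and the division by $b_2$ that the paper's computation uses tacitly (both are only justified later, in Section \ref{s:irred}), and it exposes that the lemma as literally stated is incomplete --- a non-singular conic with all coefficients in $\fq$ also has $(A,B,C,D)=(0,0,0,0)$ --- so the correct dichotomy is ``singular or defined over $\fq$'', which is what you prove and which still yields the remark, since $\fq$-rational conics were settled in Section \ref{sec:max} and are excluded from the subsequent analysis. The one blemish is the phrasing ``the second alternative cannot occur and $\cC$ must be singular'': what the argument actually delivers is that \emph{both} alternatives are excluded by the standing hypotheses, whence the contradiction establishing the remark; as you note, under the irreducibility hypothesis $d/b\notin\fq$ of Section \ref{s:irred} the second alternative is ruled out outright.
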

	
	\begin{lem} \label{lm:Sirred}
		The cubic surface $\cS$, defined by the equation \emph{(\ref{eq:cubic1})}, is irreducible if and only if $b_1d_2-b_2d_1 \neq 0$.
	\end{lem}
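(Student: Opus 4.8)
The plan is to settle both implications by studying factorizations of the homogeneous cubic $F$ in \eqref{eq:cubic1}, viewed as a polynomial in $(t_1,t_2,X,Z)$. Since $\deg F=3$, every nontrivial factorization of $F$ (over any field) involves a linear factor, so $\cS$ is reducible if and only if $F$ admits such a factor.

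\emph{The direction $b_1d_2-b_2d_1=0\Rightarrow\cS$ reducible.} By the standing hypothesis $(b_2,d_2)\neq(0,0)$, so the vanishing of the determinant forces $(b_1,d_1)=\lambda(b_2,d_2)$ for a unique $\lambda\in\fq$. Substituting this into the definitions of $A,B,C,D$ and writing $p=a_1-\lambda a_2$, $u=c_1-\lambda c_2$, $v=e_1-\lambda e_2$, a short computation yields $A=b_2p$, $B=b_2u+d_2p$, $C=d_2u+b_2v$, $D=d_2v$, whence $AX^3+BX^2Z+CXZ^2+DZ^3=(b_2X+d_2Z)(pX^2+uXZ+vZ^2)$; and since also $2t_1t_2(b_1X+d_1Z)-(t_1^2+\omega t_2^2)(b_2X+d_2Z)=(b_2X+d_2Z)(-t_1^2+2\lambda t_1t_2-\omega t_2^2)$, one obtains
\[
F=(b_2X+d_2Z)\bigl(-t_1^2+2\lambda t_1t_2-\omega t_2^2+pX^2+uXZ+vZ^2\bigr),
\]
a product of a nonzero linear form and a nonzero quadratic form (the latter contains the term $-t_1^2$). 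Hence $\cS$ is reducible.

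\emph{The direction $b_1d_2-b_2d_1\neq0\Rightarrow\cS$ irreducible.} Now $X'=b_2X+d_2Z$ and $Z'=b_1X+d_1Z$ are independent linear forms, so after the coordinate change $(t_1,t_2,X,Z)\mapsto(t_1,t_2,X',Z')$ the equation reads
\[
F=-(t_1^2+\omega t_2^2)X'+2t_1t_2Z'+G(X',Z'),
\]
with $G$ a cubic form in $X',Z'$. Suppose, for contradiction, $F=L\cdot Q$ with $L$ linear and $Q$ quadratic over $\overline{\fq}$. Decompose $L=L_t+L_x$ and $Q=Q_{tt}+Q_{tx}+Q_{xx}$ according to degree in $(t_1,t_2)$. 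Comparing the degree-$3$-in-$t$ part of $F$, which is $0$, with that of $LQ$, which is $L_tQ_{tt}$, forces $L_t=0$ or $Q_{tt}=0$; comparing then the degree-$2$-in-$t$ parts shows that a nonzero linear form---namely $L_x$ if $L_t=0$, and $L_t$ otherwise---divides $\Phi:=-X't_1^2+2Z't_1t_2-\omega X't_2^2$ inside $\overline{\fq}[t_1,t_2,X',Z']$. But $\Phi$ has no linear factor: a factor in $X',Z'$ alone is excluded by comparing the coefficients of $X'$ and $Z'$ in $\Phi$ (they are the non-proportional binary forms $-t_1^2-\omega t_2^2$ and $2t_1t_2$), while a factor $\gamma t_1+\delta t_2$ with $\gamma,\delta$ constants is excluded because $\Phi$ is irreducible over $\overline{\fq}(X',Z')$, its $t$-discriminant $4(Z'^2-\omega X'^2)$ being a non-square there (here $\omega\neq0$ is used). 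This contradiction shows $F$ is absolutely irreducible, hence so is $\cS$.

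The delicate part is the second direction, where one must rule out every possible shape of a degree-three factorization simultaneously. The device that makes this manageable is the coordinate change to $(X',Z')$---available precisely because $b_1d_2-b_2d_1\neq0$---after which the obstruction is confined to the single binary quadratic $\Phi$, whose irreducibility reduces to $\omega$ being a non-square; I expect the bookkeeping by degree in $(t_1,t_2)$ to be the only genuinely delicate point, the rest being routine coefficient comparisons.
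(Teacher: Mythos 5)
Your proof is correct and follows essentially the same route as the paper: an explicit factorization with linear factor $b_2X+d_2Z$ when $b_1d_2-b_2d_1=0$, and a coefficient comparison organized by degree in $(t_1,t_2)$ for the converse, which forces the linear factor to divide both $b_1X+d_1Z$ and $b_2X+d_2Z$. The only substantive difference is that you rigorously exclude a linear factor involving $t_1,t_2$ (via the irreducibility of $\Phi$ over $\overline{\mathbb{F}}_q(X',Z')$, using $\omega\neq 0$), whereas the paper simply asserts that any linear factor must have the shape $k_1X+k_2Z$; your version is therefore a slightly more complete rendering of the same argument.
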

	\begin{proof}
		Note that we can write the equation of $\cS$ as \[H(t_1,t_2,X,Z)+G(X,Z)=0,\] with $F$ of degree $1$ in $X$ and $Z$. Hence the only possibility for $\cS$ to be reducible is the following one:
		\begin{equation}\label{lem}
			(k_1 X+k_2 Z)(H_1(t_1,t_2)+G_1(X,Z))=0,
		\end{equation} where $H_1(t_1,t_2)+G_1(X,Z)$ may be reducible itself. \\
		Consider now $b_1d_2-b_2d_1 = 0$. Then the plane $\pi: b_1X+d_1Z=0 $ is a component of the cubic surface $\cS$. Indeed, using \eqref{eq:cubic1}, we have \[( b_1X +d_1 Z)(b_2 t_1^2 - 2 b_1 t_1 t_2 + b_2 \omega t_2^2 + ( b_1a_2- a_1 b_2) X^2 +( b_1 c_2-
		b_2 c_1) X Z +( b_1 e_2- b_2 e_1) Z^2 )=0.\]
		Hence $\cS$ is reducible.\\
		On the other hand, if $\cS$ is reducible, using Equation (\ref{lem}) and the identity principle of polynomials,  we have
		\begin{equation*}
			\begin{cases}
				k_1 X + k_2 Z =h(b_1 X+d_1Z)\\
				k_1 X + k_2 Z = j(b_2X+d_2Z)
			\end{cases}
		\end{equation*}
		which implies $ h(b_1X+d_1Z)=j(b_2X+d_2Z )$, for some $h,j \in \overline{\mathbb{ F}}_q^*$ and so $b_1d_2=b_2d_1 $. 
	\end{proof}
	
	\section{Irreducible case}\label{s:irred}
	For a survey on cubic surfaces see \cite{manin1986cubic}.
	In this section we suppose $\cS$ irreducible. In particular we know that $b_1d_2-b_2d_1 \neq 0$ or equivalently $\frac{d}{b} \notin \fq$.
	
	{\color{black}\begin{obs}
			In this case we can set $d=1$. Indeed, we can divide the equation \eqref{conic1} by $d$, since we have $d \ne0$ and $b \ne 0$. This also implies $b_2 \ne 0$.
	\end{obs}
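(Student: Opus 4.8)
The plan is to verify the two assertions of the remark separately: first that $d \neq 0$ in the irreducible case, which legitimizes rescaling the defining form of $\cC$ so that $d = 1$, and then that this normalization forces $b_2 \neq 0$.

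For the first point, recall that throughout Section \ref{s:irred} we assume $\cS$ irreducible, so Lemma \ref{lm:Sirred} gives $b_1 d_2 - b_2 d_1 \neq 0$. If $d = 0$, then (writing $d = d_1 + \epsilon d_2$ as in the set-up of \eqref{sistem1con}) we would have $d_1 = d_2 = 0$ and hence $b_1 d_2 - b_2 d_1 = 0$, a contradiction; thus $d \neq 0$. Combined with the standing assumption $b \neq 0$ fixed right after \eqref{conic1}, we may divide the defining equation \eqref{conic1} by the unit $d \in \fqs^{*}$. This produces an equation of the \emph{same} conic $\cC$ in which the coefficient of $YZ$ equals $1$, and it replaces $b$ by $b/d \neq 0$, so the hypothesis $b \neq 0$ is preserved; hence we may and do take $d = 1$.

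For the second point, once $d = 1$ we have $d_1 = 1$ and $d_2 = 0$ in the decomposition $d = d_1 + \epsilon d_2$. Substituting into the irreducibility criterion of Lemma \ref{lm:Sirred}, the inequality $b_1 d_2 - b_2 d_1 \neq 0$ reads $-b_2 \neq 0$, i.e. $b_2 \neq 0$, as claimed.

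The statement is essentially bookkeeping, so I do not anticipate a genuine obstacle; the only subtlety is that $d \neq 0$ is not visible from \eqref{conic1} by itself but must be extracted from the irreducibility hypothesis through Lemma \ref{lm:Sirred} --- which is precisely what the phrase ``in this case'' refers to. One should also keep in mind, when later invoking Lemma \ref{L2} and Observation \ref{rk:quad}, that rescaling the form by $1/d$ multiplies $-bcd+ad^2+b^2e$ by $1/d^3$, so the square/non-square bookkeeping there has to be re-read with $d=1$ rather than carried over verbatim.
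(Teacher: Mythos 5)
Your proof is correct and follows the same route as the paper's one-line justification: $d\neq 0$ and $b_2\neq 0$ both fall out of the irreducibility condition $b_1d_2-b_2d_1\neq 0$ (equivalently $d/b\notin\fq$) from Lemma \ref{lm:Sirred}, and dividing \eqref{conic1} by $d$ is a harmless rescaling. Your closing caveat that the rescaling multiplies $-bcd+ad^2+b^2e$ by $d^{-3}$ and so can change its square class is a valid extra observation, but it does not affect the statement itself.
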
}
	
	We want to find a bound for the number of rational points of $\cS$. If $\cS$ is a smooth surface we have the following theorem, see \cite[Theorem 27.1 and Table 1 $ \S 31$]{manin1986cubic}.
	\begin{thm}[Weil] \label{th:Weil}
		Let $\cS$ be a smooth cubic surface defined over a finite field $\fq$. Then
		\[|\cS(\fq)|=q^2+\alpha q +1,\]
		with $\alpha \in \{-2,-1,0,1,2,3,4,5,7\}$.
	\end{thm}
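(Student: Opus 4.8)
The statement to prove is Theorem~\ref{th:Weil} (Weil's theorem on smooth cubic surfaces): for a smooth cubic surface $\cS$ defined over $\fq$, one has $|\cS(\fq)| = q^2 + \alpha q + 1$ with $\alpha \in \{-2,-1,0,1,2,3,4,5,7\}$. Since the excerpt explicitly cites \cite[Theorem 27.1 and Table 1 $\S31$]{manin1986cubic}, the intended justification is a citation rather than a from-scratch argument. Nonetheless, here is the route I would take to establish it.

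The plan is to use the fact that a smooth cubic surface over $\overline{\fq}$ is rational: blowing down the $27$ lines (or rather, six skew ones among them) exhibits $\cS_{\overline{\fq}}$ as $\PG(2,\overline{\fq})$ blown up at six points in general position. Consequently the $\ell$-adic cohomology is $H^0 \cong H^4 \cong \overline{\mathbb{Q}}_\ell$, $H^1 = H^3 = 0$, and $H^2$ has dimension $7$ (one class from the hyperplane section, six from the exceptional divisors, matching the Picard rank of the del Pezzo surface of degree $3$). The first step is to record this cohomological data and invoke the Grothendieck--Lefschetz trace formula, which gives
\begin{equation*}
|\cS(\fq)| = 1 + \Tr(\mathrm{Frob} \mid H^2) + q^2.
\end{equation*}
Next, one uses that $\mathrm{Frob}$ acts on $H^2$ as $q$ times a permutation-like action coming from the Galois action on the $27$ lines, i.e.\ through the Weyl group $W(E_6)$ acting on the geometric Picard lattice. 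Thus $\Tr(\mathrm{Frob}\mid H^2) = q\cdot t$, where $t$ is the trace of an element $w \in W(E_6)$ acting on the $7$-dimensional lattice $\Pic(\cS_{\overline{\fq}})\otimes\mathbb{Q}$, and $\alpha = t$.

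The heart of the argument is then the purely group-theoretic computation of the possible traces of elements of $W(E_6)$ on this representation. One decomposes the $7$-dimensional permutation-flavoured representation as the trivial representation (spanned by the canonical class, which is Galois-fixed) plus the $6$-dimensional reflection representation of $W(E_6)$. Running over conjugacy classes of $W(E_6)$ (there are $25$ of them) and computing the character of the reflection representation on each, one finds that $1 + (\text{reflection character})$ takes exactly the values $\{-2,-1,0,1,2,3,4,5,7\}$; the value $7$ comes from the identity (all $27$ lines rational), and the value $-2$ from an element of order, e.g., $9$ or $12$ that permutes the lines with no short orbits. This is the step I expect to be the main obstacle to write out in full, since it requires either a careful case analysis over the conjugacy classes of $W(E_6)$ or an appeal to the known character table; but conceptually it is routine finite group theory.

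Finally, one should note the realizability direction implicit in Table~1 of \cite{manin1986cubic}: each of these nine values of $\alpha$ is actually attained by some smooth cubic surface over some $\fq$, which is shown by exhibiting, for each conjugacy class of $W(E_6)$ arising as a Frobenius action, an explicit cubic form whose $27$ lines are permuted accordingly (for instance diagonal cubics, or surfaces obtained by blowing up six points forming a single Galois orbit). For the purposes of the present paper only the upper-side estimate $\alpha \le 7$ and the finiteness of the list are needed, so in our setting it suffices to cite Manin; I would simply reference \cite[Theorem 27.1]{manin1986cubic} and move on to the genuinely new reducible and singular cases treated in the sections that follow.
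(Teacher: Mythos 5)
Your proposal matches the paper, which offers no proof of this statement and simply cites Manin (Theorem 27.1 and Table 1, \S 31 of \emph{Cubic Forms}); your Lefschetz--trace/$W(E_6)$ sketch is the standard argument behind that citation and is correct in outline. One small slip in an illustrative aside: the extremal value $\alpha=-2$ (reflection-character value $-3$) is realized by the order-$3$ class $3A_2$ of $W(E_6)$, whose element permutes the $27$ lines in nine $3$-cycles, not by an element of order $9$ or $12$.
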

	
	The missing case is when $\cS$ is singular. We start our investigation from the possible singularities of $\cS$.
	
	\begin{thm}\label{Th:singular1}
		Let $\cS$ be the cubic surface defined by equation \eqref{eq:cubic1}. Then $\cS$ has at most one singular point $P$. In this case $P$ is a double point and is defined over $\fq$.
	\end{thm}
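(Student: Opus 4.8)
The plan is to read the singular locus of $\cS$ off the defining equation \eqref{eq:cubic1}. In the irreducible case we may take $d=1$, so $d_1=1$, $d_2=0$ and $b_2\neq 0$; abbreviate $g(X,Z):=AX^3+BX^2Z+CXZ^2+DZ^3$, which is a nonzero binary cubic because $(A,B,C,D)\neq(0,0,0,0)$ when $\cC$ is non-singular. First I would compute $\partial F/\partial t_1$ and $\partial F/\partial t_2$; multiplying the relations $\partial F/\partial t_1=0$ and $\partial F/\partial t_2=0$ by $t_1$ and by $t_2$ respectively and subtracting yields $b_2X(t_1^2-\omega t_2^2)=0$. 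Since $b_2\neq 0$ and $\omega$ is a non-square of $\fq$, this forces $X=0$ or $t_1=t_2=0$. A direct check on the line $X=Z=0$ --- where $\partial F/\partial Z=2t_1t_2$, so $t_1t_2=0$, which then contradicts $\partial F/\partial X=0$ --- shows it carries no singular point, and the remaining case $X=0$, $Z\neq 0$ collapses to $t_1=t_2=0$ as well. Hence every singular point of $\cS$ has the form $P=(0:0:x_0:z_0)$ with $(x_0,z_0)\neq(0,0)$.

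Next I would substitute $t_1=t_2=0$ into $F$ and into the surviving partials: the $t$-terms drop out, leaving $F(P)=g(x_0,z_0)$, $\partial F/\partial X(P)=g_X(x_0,z_0)$, $\partial F/\partial Z(P)=g_Z(x_0,z_0)$, while $\partial F/\partial t_1(P)=\partial F/\partial t_2(P)=0$ automatically. Thus $P$ is singular precisely when $(x_0:z_0)\in V(g,g_X,g_Z)\subseteq\PG(1,\overline{\mathbb F}_q)$ --- including characteristic $3$, where one must additionally impose $F(P)=0$. A short factorization argument identifies $V(g,g_X,g_Z)$ with the set of roots of $g$ of multiplicity at least $2$: at a simple root $g=(\mu X+\nu Z)h$ with $h(x_0,z_0)\neq 0$, so $g_X$ and $g_Z$ cannot both vanish there, whereas at a root of multiplicity $\geq 2$ all three forms vanish. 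A nonzero binary cubic has at most one such root (two distinct double roots would force $\deg g\geq 4$), so $\cS$ has at most one singular point; and when it exists, $(x_0:z_0)$ is the unique multiple root of the $\fq$-form $g$, hence fixed by Frobenius, so $P$ is defined over $\fq$.

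Finally I would show $P$ is a double point, not a triple one, by expanding $F$ in affine coordinates centred at $P$: the parts of degree $\leq 1$ vanish (since $P\in\cS$ and $\nabla F(P)=0$), while the degree-$2$ part contains the contribution $2t_1t_2(b_1x_0+z_0)-(t_1^2+\omega t_2^2)b_2x_0$ of the $t$-quadratic term of \eqref{eq:cubic1}. This is a nonzero quadratic form in $(t_1,t_2)$: vanishing of its $t_1^2$-coefficient gives $b_2x_0=0$, hence $x_0=0$ and then $z_0=0$, a contradiction. Since $g$ contributes to the degree-$2$ part only through $X,Z$, it cannot cancel these $(t_1,t_2)$-monomials, so the quadratic part of $F$ at $P$ is nonzero and $\mathrm{mult}_P(\cS)=2$.

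I do not expect one decisive hard step; the main concern is that the case analysis be airtight. The delicate spots are the separate elimination of the line $X=Z=0$, the characteristic-$3$ caveat that $\nabla F(P)=0$ need not by itself place $P$ on $\cS$, and --- most importantly --- the fact that the very inequality $b_1d_2\neq b_2d_1$ (here $b_2\neq 0$) that makes $\cS$ irreducible is exactly what prevents the quadratic part of $F$ at $P$ from degenerating, which is precisely what a triple point would require.
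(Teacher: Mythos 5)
Your argument is correct and follows essentially the same route as the paper: use the $t$-partials to force $t_1=t_2=0$ (after separately excluding the line $X=Z=0$), identify the remaining singular points with the multiple roots of the binary cubic $g=AX^3+BX^2Z+CXZ^2+DZ^3$, and rule out a triple point via the quadratic $(t_1,t_2)$-part of $F$ at $P$. Your packaging of the middle step as ``a nonzero binary cubic with $\fq$-coefficients has at most one root of multiplicity $\geq 2$, and that root is Frobenius-fixed'' is a cleaner, characteristic-$3$-safe version of the paper's case analysis on $A=0$ versus $A\neq 0$, and your non-degeneracy check on the $(t_1,t_2)$-quadratic form (reducing to $b_2x_0=0\Rightarrow x_0=z_0=0$) is in fact tighter than the paper's corresponding computation.
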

	
	\begin{proof}
		Let $\cS \colon F(t_1,t_2,X,Z)=0$. The condition $ \frac{\partial F}{\partial t_1}=\frac{\partial F}{\partial t_2}=0 $ implies \[t_1^2-\epsilon^2t_2^2=0 \mbox{ or } X=Z=0.\]
		This means that $t_1=t_2=0$, as $\epsilon \in \fqs \setminus \fq$, or $X=Z=0$.
		When $X=Z=0$ together with  $ \frac{\partial F}{\partial X}=\frac{\partial F}{\partial Z}=0 $ imply
		\begin{equation}\label{sist:xz=0}
			\begin{cases}
				\begin{aligned}
					&2t_1 t_2 b_1 -b_2(t_1^2+\omega t_2^2)=0 \\
					&2t_1 t_2 =0
				\end{aligned}
			\end{cases}
		\end{equation}
		Hence $t_1=t_2=0$. We need to study
		\begin{equation}\label{sy:derParz}
			\begin{cases}
				\begin{aligned}
					\frac{\partial F}{\partial X}=3AX^2+2BXZ+CZ^2=0\\
					\frac{\partial F}{\partial Z}=BX^2+2CXZ+3DZ^2=0
				\end{aligned}
			\end{cases}
		\end{equation}
		If $A \neq 0$  then $Z=0$ implies $X=0$, so we can have only solutions of the form $(0:0:\beta:1)$. The system becomes
		\begin{equation}\label{sy:derParzZ0}
			\begin{cases}
				\begin{aligned}
					\frac{\partial F}{\partial X}=3AX^2+2BX+C=0\\
					\frac{\partial F}{\partial Z}=BX^2+2CX+3D=0
				\end{aligned}
			\end{cases}
		\end{equation}
		Note that System \eqref{sy:derParzZ0} has either one or two solutions (counted with multiplicity). The second case is only possible if either the two equations are proportional, namely \[3A =kB, \quad B=kC \mbox{ and } C=3kD, \] or \[B=C=D=0.\]
		In any cases we have a double root ($\frac{-1}{k}$ and $0$).  Hence we can just have one singular point, say $P=(0:0:\beta:1)$. 
		Note that this still remains true when the characteristic of the field is $3$ and $B=C=0$.	
		Furthermore, $\beta$ needs to be an element of $\fq$, otherwise $P'=(0:0:\beta^q:1)$ would be another singular point different from $ P $. \\
		Suppose now $A=0$. System \eqref{sy:derParz} becomes
		\begin{equation*}
			\begin{cases}
				\begin{aligned}
					\frac{\partial F}{\partial X}&=2BXZ+CZ^2=0\\
					\frac{\partial F}{\partial Z}&=BX^2+2CXZ+3DZ^2=0
				\end{aligned}
			\end{cases}
		\end{equation*}
		If $Z=0$ and $B \ne 0$ then we have no solutions. If $Z=0$ and $B=0$ we have only the solution $(0:0:1:0)$.
		If $Z \ne 0$ we can suppose $Z=1$:
		\begin{equation*}
			\begin{cases}
				\begin{aligned}
					\frac{\partial F}{\partial X}&=2BX+C=0\\
					\frac{\partial F}{\partial Z}&=BX^2+2CX+3D=0
				\end{aligned}
			\end{cases}
		\end{equation*}
		Note that $B$ needs to be different from $0$. Indeed, if $B=0$ then we have $C=0$ and $D=0$.
		Hence we can have at most one solution which is defined over $ \fq $.\\
		Finally, observe that $P=(0:0:X:Z)$ cannot be a triple point for $\cS$. Since both $d_2$ and $b_2$ cannot be zero, the condition $ \frac{\partial^2 F}{\partial t_1^2}(P)=\frac{\partial^2 F}{\partial t_2^2}(P)=0 $ implies $ -2(b_2 X)=-2\omega(b_2 X) $ and then $\omega=1$, which is a contradiction as $ \omega $ is a non-square of $\fq$.
	\end{proof}
	We are going to study the tangent cone at a singular point $P$ to investigate the number of points of $\cS$. See \cite{bonini2021rational}.
	Remember that the tangent cone $T_P(\cS)$ is the set of all tangent lines at a singular point $P$ of $\cS$. When $\cS$ is a cubic surface we have four possibilities for the tangent cone $T_P(\cS)$:
	\begin{itemize}
		\item a quadric cone;
		\item a line (the intersection of two planes defined over $\fqs$.);
		\item a couple of distinct planes;
		\item a repeated plane.
	\end{itemize}
	\begin{thm}
		With the notation above, the tangent cone $T_P(\cS)$ at $P=(0:0:1:0)$ or $P=(0:0:\beta:1)$ is a quadric cone  with the exception of $\beta=-1,0$. In these cases it is a couple of planes either defined over $\fqs \setminus \fq$ or over $\fq$. In particular there are $q+1$, $1$, $2q+1$ tangent lines through $P$, respectively.
	\end{thm}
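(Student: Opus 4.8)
The plan is to use the standard description of the tangent cone at a double point: $T_P(\cS)$ is the quadric of $\PG(3,q)$ cut out by the quadratic part of the Taylor expansion of $F$ at $P$. Viewed as a quadric of $\PG(3,q)$ this has $P$ in its singular locus, so it is a proper quadric cone, a pair of distinct planes, or a double plane according as the quadratic form has rank $3$, $2$ or $1$; and in the last two cases the pair of planes is either $\fq$-rational or conjugate over $\fqs$. So I would first compute this quadratic part in each of the two normal forms for $P$ supplied by Theorem~\ref{Th:singular1}.

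For $P=(0:0:1:0)$ one has $A=B=0$; dehomogenising $F$ with $X=1$ and collecting the quadratic terms yields
\[
-b_2t_1^2+2b_1t_1t_2-\omega b_2t_2^2+CZ^2,
\]
whose Gram matrix has determinant $C(\omega b_2^2-b_1^2)$. For $P=(0:0:\beta:1)$, after using the singularity relations $3A\beta^2+2B\beta+C=0$ and $B\beta^2+2C\beta+3D=0$ satisfied by $\beta$, dehomogenising with $Z=1$ and translating $X=\beta+u$ gives
\[
-b_2\beta\, t_1^2+2(b_1\beta+1)t_1t_2-\omega b_2\beta\, t_2^2+(3A\beta+B)u^2,
\]
with Gram determinant $(3A\beta+B)\bigl(\omega b_2^2\beta^2-(b_1\beta+1)^2\bigr)$. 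The point on which everything hinges is that the quadratic factor $\omega b_2^2\beta^2-(b_1\beta+1)^2$ — and likewise $\omega b_2^2-b_1^2$ in the first case — is \emph{never} zero: its vanishing would exhibit $\omega$ as the square of an element of $\fq$ (recall $b_1,b_2,\beta\in\fq$ and $b_2\neq0$), contradicting the choice of $\omega$ as a non-square. Hence the restriction of the form to the variables $t_1,t_2$ is already non-degenerate, the rank is always $\geq2$ (a double plane never occurs), and the rank is $3$ — i.e.\ $T_P(\cS)$ is a quadric cone — if and only if $3A\beta+B\neq0$ (respectively $C\neq0$). Finally I would note that this inequality fails precisely when the cubic form $g(X,Z)=AX^3+BX^2Z+CXZ^2+DZ^3$ is a cube of a linear form over $\fq$, $P$ being then its triple root; indeed $3A\beta+B=\tfrac12\,\partial_X^2 g(\beta,1)$, and by singularity $\beta$ is already a root of $g$ of multiplicity at least two.

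For the degenerate case — $g$ a perfect cube — I would exploit the freedom still allowed by the normalisations. Replacing $X$ by $\mu X$, $\mu\in\fq^*$, preserves the shape of \eqref{conic1}, keeps $b\neq0$, $d=1$ and the point $(0:1:0)$ on $\cC$, multiplies $-bcd+ad^2+b^2e$ by the square $\mu^2$ (so that Remark~\ref{rk:quad} still holds), and sends the triple root $\beta$ of $g$ to $\beta/\mu$; together with the collineation $(X:Y:Z)\mapsto(Z:Y:X)$ when the triple root is at infinity, this reduces the situation to $P=(0:0:0:1)$ or $P=(0:0:-1:1)$. For $P=(0:0:0:1)$ the surviving quadratic part is $2t_1t_2$, a product of two $\fq$-rational linear forms, so $T_P(\cS)$ is a pair of planes defined over $\fq$. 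For $P=(0:0:-1:1)$ it is $b_2t_1^2+2(1-b_1)t_1t_2+\omega b_2t_2^2$, a binary form of discriminant $4\bigl((1-b_1)^2-\omega b_2^2\bigr)=4\,\N_{\fqs/\fq}(1-b)$; a computation with the coefficients of $\cC$ — using that $\cC$ is non-degenerate and that $-bcd+ad^2+b^2e$ is a square in $\fqs$ — shows this norm is a non-square of $\fq$, so $T_P(\cS)$ is a pair of conjugate planes, defined over $\fqs\setminus\fq$.

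It then remains to count the $\fq$-rational lines through $P$ contained in $T_P(\cS)$. A proper quadric cone of $\PG(3,q)$ contains exactly $q+1$ lines through its vertex $P$. A pair of $\fq$-rational planes through $P$ contains $2(q+1)-1=2q+1$ of them: the $q+1$ lines of the pencil at $P$ in each plane, overlapping only in the common line of the two planes. A conjugate pair of planes contains, among $\fq$-rational lines, only this common line — an $\fq$-rational line lying on one of the planes is fixed by Frobenius and so lies on both — hence exactly $1$ line. This yields the claimed values $q+1$, $1$, $2q+1$. I expect the genuinely delicate step to be the bookkeeping in the degenerate case: verifying that the available normalisations really pin the singular point to $(0:0:-1:1)$ or $(0:0:0:1)$, and that $\N_{\fqs/\fq}(1-b)$ has the asserted sign — both being routine but somewhat lengthy manipulations of the coefficients of $\cC$.
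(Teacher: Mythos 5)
Your core computation coincides with the paper's: both read off $T_P(\cS)$ from the quadratic part of $F$ at the double point, observe that the $(t_1,t_2)$-block $-b_2\beta\,t_1^2+2(b_1\beta+1)t_1t_2-\omega b_2\beta\,t_2^2$ is nondegenerate because the vanishing of its discriminant would exhibit $\omega$ as a square of $\fq$, and conclude that the rank drops from $3$ to $2$ exactly when $3A\beta+B=0$ (resp.\ $C=0$). Your identification of this condition with $g=AX^3+BX^2Z+CXZ^2+DZ^3$ being a perfect cube, the rescaling $X\mapsto\mu X$ that honestly normalizes the triple root to $\beta\in\{0,-1\}$ (the paper asserts $\beta=-1$ without this justification), and the line counts $q+1$, $2q+1$, $1$ are all correct and in places more careful than the paper's own write-up.

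There is, however, one genuine error, and it sits precisely at the step you defer as ``routine but lengthy'': you assert that for $P=(0:0:-1:1)$ the discriminant $4\bigl((1-b_1)^2-\omega b_2^2\bigr)=4\,\N_{\fqs/\fq}(1-b)$ is \emph{always} a non-square of $\fq$, so that the tangent cone is always a conjugate pair of planes. No computation can prove this, because it is false: the norm is surjective onto $\fq^*$, and none of the standing hypotheses (nondegeneracy of $\cC$, $b_2\neq 0$, Remark \ref{rk:quad}) controls the quadratic character of $\N(1-b)$ in $\fq$. Concretely, over $\F_{25}$ with $\epsilon^2=2$ the conic $X^2+\epsilon XY+3XZ+YZ+(3+4\epsilon)Z^2=0$ of the form \eqref{conic1} satisfies every hypothesis ($\det\cA=2\neq 0$, $-bcd+ad^2+b^2e=2\in\square_{q^2}$, $d/b\notin\F_5$), has $g(X,Z)=(X+Z)^3$ and hence singular point $(0:0:-1:1)$, and the quadratic part there is $t_1^2+2t_1t_2+2t_2^2=(t_1-t_2)(t_1-2t_2)$: a pair of $\F_5$-rational planes, with $2q+1$ tangent lines. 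This is exactly why Theorem \ref{th:WeilSing} splits the case $\beta=-1$ according to whether $\beta_1=(1-b_1)^2-b_2^2\omega$ lies in $\square_q$ or not. The disjunctive statement of the theorem survives your error, but your proof establishes a false dichotomy ($\beta=0\Rightarrow$ rational pair, $\beta=-1\Rightarrow$ conjugate pair) that would corrupt the downstream point counts; the repair is simply to replace the asserted sign of $\N(1-b)$ by a case distinction on its quadratic character, which is what the paper does.
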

	\begin{proof}
		The point $P=(0:0:1:0)$ is singular if and only if $A,B=0$ (and so $C\ne 0$ by hypothesis). In this case the associated matrix of $T_P(\cS)$ is
		\begin{equation*}
			\cT=\begin{pmatrix}
				-b_2 & b_1 & 0& 0 \\
				b_1 & -b_2 \omega& 0&0\\
				0&0&0&0\\
				0&0&0&C
			\end{pmatrix}
		\end{equation*}
		It follows that $T_P(\cS)$ is a quadric cone.\\
		When $P=(0:0:\beta:1)$ the tangent cone has the following associated matrix:
		\begin{equation*}
			\cT=\begin{pmatrix}
				-b_2\beta & b_1 \beta +1  & 0& 0 \\
				b_1\beta+1 & -b_2\beta \omega  & 0&0\\
				0&0&B+3A\beta&C+B\beta\\
				0&0&C+B\beta&3D+C\beta
			\end{pmatrix}=\begin{pmatrix}
				\cT_1 & 0 \\
				0& \cT_2
			\end{pmatrix}.
		\end{equation*}
		Note that $\beta$ satisfies $3D+C\beta=-(B\beta^2+C\beta)$ and $C+B\beta=-(3A\beta^2+B\beta)$. This implies that $|\cT|=0$. Indeed, \[|\cT_2|=-(B \beta +3A \beta^2)(B\beta+C)+(B \beta +3A \beta^2)(B\beta+C)=0.\]
		Furthermore the rank of $\cT$ is $3$ except when
		\begin{itemize}
			\item[1.] $B=C=D=0$. In this case we have $\beta=0$.
			\item[2.] $B \ne 0$ and $\beta=\frac{-B}{3A}$. This case occurs when System \eqref{sy:derParzZ0} is reduced to the single equation $X^2+2X+1=0$. Thus, $\beta=-1$.
		\end{itemize}
		In both situations the rank of $\cT$ equals $2$.
	\end{proof}
	
	We want to study the maximum number of lines through $P=(0:0:\beta:1)$ entirely contained in $\cS$. We apply to $\cS$ the invertible projectivity defined by
	\[(t_1:t_2:X:Z) \mapsto (t_1:t_2:X-\beta Z: Z),\]
	so that
	\[\cS' \colon Z (2(1+b_1 \beta) t_1 t_2 -(b_2 \beta) (t_1^2+\omega t_2^2)+(3A\beta+B)X^2)+X(2b1t_1t_2-b_2(t_1^2+\omega t_2^2)+AX^2)=0\]
	and $P'=(0:0:0:1)$.
	This means that we need to study the system
	\begin{equation}\label{sys:inter}
		\begin{cases}
			\begin{aligned}
				&\phi_2(t_1,t_2,X):=2(1+b_1 \beta) t_1 t_2 -(b_2 \beta) (t_1^2+\omega t_2^2)+(3A\beta+B)X^2=0\\
				&\phi_3(t_1,t_2,X):=X(2b_1t_1t_2-b_2(t_1^2+\omega t_2^2)+AX^2)=0
			\end{aligned}
		\end{cases}
	\end{equation}
	In fact, each point satisfying System \eqref{sys:inter} corresponds to a line through $P'$ contained in $\cS'$.
	\begin{thm}
		With the notation above, if $\alpha$ is the number of lines through the singular point $P'$, then $\alpha \in \{0,2,4\}$.
	\end{thm}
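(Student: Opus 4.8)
The plan is to compute $\alpha$ directly from System~\eqref{sys:inter}. By the observation preceding the statement, the lines through $P'$ lying on $\cS'$ correspond bijectively to the $\fq$-points $(t_1:t_2:X)\in\PG(2,q)$ of that system, so $\alpha$ is exactly that number. Since $\phi_3=X\psi$ with $\psi:=2b_1t_1t_2-b_2(t_1^2+\omega t_2^2)+AX^2$, such a point must satisfy $\phi_2=0$ together with $X=0$ or $\psi=0$. Let $n_1$ count the solutions with $X=0$ and $n_2$ those with $\psi=0$. First I would show the two families are disjoint: a common point $(t_1:t_2:0)$ would annihilate both $g:=\phi_2|_{X=0}$ and $\psi|_{X=0}$, but $\phi_2-\beta\psi=2t_1t_2+(2A\beta+B)X^2$, so on $X=0$ this forces $t_1t_2=0$, and then $t_1=0$ gives $b_2\omega=0$ while $t_2=0$ gives $b_2=0$ — both impossible. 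Hence $\alpha=n_1+n_2$, and it suffices to prove $n_1,n_2\in\{0,2\}$.

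For $n_1$: the binary quadratic $g=-b_2\beta\,t_1^2+2(1+b_1\beta)\,t_1t_2-b_2\beta\omega\,t_2^2$ is not identically zero (it is $2t_1t_2$ when $\beta=0$, and has $t_1^2$-coefficient $-b_2\beta\neq0$ when $\beta\neq0$), and its discriminant $4\bigl((1+b_1\beta)^2-b_2^2\beta^2\omega\bigr)$ never vanishes: for $\beta\neq0$ the square $(1+b_1\beta)^2$ cannot equal the nonzero non-square $b_2^2\beta^2\omega$, and for $\beta=0$ it equals $4$. So $g$ has $0$ or $2$ roots in $\PG(1,q)$, i.e.\ $n_1\in\{0,2\}$.

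The crux is $n_2$, and this is where the non-square $\omega$ is essential. Since $\phi_2=\chi+\beta\psi$ with $\chi:=2t_1t_2+cX^2$ and $c:=2A\beta+B$, the pairs $\phi_2=\psi=0$ and $\chi=\psi=0$ cut out the same locus. If $c\neq0$, the conic $\chi=0$ contains $(1:0:0)$ and is parametrized by $\bigl(t_1:-\tfrac{c}{2t_1}:1\bigr)$ with $t_1\in\fq\setminus\{0\}$, together with $(1:0:0)$ and $(0:1:0)$ — neither of which lies on $\psi$. Substituting into $\psi=0$ and clearing the denominator yields $4b_2t_1^4-4(A-b_1c)t_1^2+b_2\omega c^2=0$, a quadratic in $s=t_1^2$ whose discriminant $16\bigl((A-b_1c)^2-b_2^2c^2\omega\bigr)$ is nonzero and whose product of roots, $\omega c^2/4$, is a non-square; hence it has two distinct $\fq$-roots, exactly one of them a nonzero square, or no $\fq$-root at all — in either case $n_2\in\{0,2\}$. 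If $c=0$, then $A\neq0$, for otherwise $A=B=0$, which is ruled out in the present normalization (cf.\ Theorem~\ref{Th:singular1} and its proof), and $\chi=0$ is the line-pair $t_1=0$, $t_2=0$, on which $\psi$ reduces to $AX^2-b_2\omega t_2^2$ and $AX^2-b_2t_1^2$; since $\omega$ is a non-square, exactly one of $b_2\omega/A$, $b_2/A$ is a square, so $n_2=2$. In every case $\alpha=n_1+n_2$ with $n_1,n_2\in\{0,2\}$, whence $\alpha\in\{0,2,4\}$.

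The main obstacle is the sharp bound $n_2\le2$: a plain B\'ezout estimate for the two conics $\phi_2=0$, $\psi=0$ gives only $n_2\le4$, which together with $n_1\le2$ would leave $\alpha=6$ a priori possible; ruling it out rests precisely on the fact that the relevant discriminant (resp.\ ratio of coefficients) always acquires the non-square factor $\omega$, forcing the trade-off above. A final routine check is that the degenerate positions $\beta\in\{-1,0\}$, where $T_P(\cS)$ splits into a pair of planes and some of the conics above become reducible, still obey the dichotomy — e.g.\ $\beta=0$ forces $B=C=D=0$, hence $c=0$, $n_1=n_2=2$ and $\alpha=4$.
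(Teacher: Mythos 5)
Your argument is correct and follows the same basic strategy as the paper's proof (split the solutions of System~\eqref{sys:inter} according to $X=0$ or $X\neq 0$, and show each class has $0$ or $2$ elements), but it is genuinely sharper at the one point where the paper's proof is only an assertion. The paper bounds the solutions with $X\neq 0$ by noting they satisfy $t_1t_2=c_1$ and come in pairs under $(t_1,t_2)\mapsto(-t_1,-t_2)$; by itself this only gives $0$, $2$ or $4$ such solutions (two conics meet in up to four points), so the claimed ``at most two'' is not justified there. Your resolvent computation closes exactly this gap: the quadratic in $s=t_1^2$ has nonvanishing discriminant (a square minus a nonzero multiple of $\omega$) and product of roots $\omega c^2/4\notin\square_q$, so at most one root is a nonzero square and $n_2\in\{0,2\}$; the degenerate case $c=0$ is handled by the same square/non-square dichotomy on the two lines $t_1=0$, $t_2=0$. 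Your treatment of $n_1$ via the discriminant $(1+b_1\beta)^2-b_2^2\beta^2\omega\neq 0$ also makes precise the paper's ``irreducible or not'' dichotomy (a double root cannot occur), and it matches the quantity $\beta_1$ used later in Theorem~\ref{th:WeilSing}. One small inaccuracy in your closing aside: $\beta=0$ forces only $C=D=0$ (the vanishing of $\partial F/\partial X$ and $\partial F/\partial Z$ at $(0:0:0:1)$), not $B=0$, so $c=B$ may be nonzero there; this does not affect your proof, since the case $\beta=0$ is already covered by your general $c=0$/$c\neq0$ analysis, but the conclusion ``$\alpha=4$ whenever $\beta=0$'' does not follow from your argument as stated.
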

	
	\begin{proof}
		The homogeneous polynomial $\phi_2$ cannot be a factor of $\phi_3$. Thus, we have at most $6$ points of intersection. According whether $\phi_2(t_1,t_2,0)$ is irreducible or not over $\fq$ we lose or have two intersections and for every solution $(t_1:t_2:1)$ we have also the solution $(-t_1:-t_2:1)$. This implies that $\alpha \ne 1$. Now note that we have at most two solutions with $X=1$. They are given precisely by $t_1$ and $t_2$ satisfying $t_1 t_2 = c_1$, where $c_1 \in \fq \setminus \{0\}$ depends on $A, B $ and $ \beta$. 
	\end{proof}
	
	\begin{obs}
		Note that when $\phi_2$ is reducible ($\beta=0$ or $\beta=-1$) we have $\alpha=0$ when we deal with two complex planes. In particular for $\beta=0$ this cannot happen and hence $\alpha=4$.
	\end{obs}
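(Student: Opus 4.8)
The plan is to read everything off System~\eqref{sys:inter}, whose $\fq$-rational points correspond to the lines of $\cS'$ through $P'$. In both reducible cases, $\beta=0$ and $\beta=-1$, the coefficient $3A\beta+B$ of $X^2$ in $\phi_2$ vanishes (for $\beta=0$ because $B=0$, and for $\beta=-1=-B/(3A)$ by a one-line check), so $\phi_2$ is a binary quadratic form in $t_1,t_2$ only. To deal with ``two complex planes'' means precisely that this binary form is anisotropic over $\fq$, so that its only $\fq$-rational zero, in the plane with coordinates $(t_1:t_2:X)$, is $(0:0:1)$. Evaluating the second equation there gives $\phi_3(0,0,1)=A$. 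Since for $\beta=0$ one has $\phi_2=2t_1t_2$, which is isotropic, the anisotropic possibility can occur only for $\beta=-1$, where $\beta=-B/(3A)$ forces $A\neq0$; hence $(0:0:1)$ fails the second equation, System~\eqref{sys:inter} has no $\fq$-rational solution, and $\alpha=0$. This settles the first assertion.

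For $\beta=0$ I argue as follows. By the preceding theorem $\beta=0$ forces $B=C=D=0$, so $\phi_2=2t_1t_2=2\,t_1\cdot t_2$ always splits over $\fq$ into two distinct planes and can never be a pair of conjugate ones; this is the ``in particular'' clause. Moreover $\cC$ is non-singular, so $(A,B,C,D)\neq(0,0,0,0)$, whence $A\neq0$. Now solve System~\eqref{sys:inter}: $\phi_2=0$ forces $t_1=0$ or $t_2=0$. If $t_1=0$ then $\phi_3=X(AX^2-b_2\omega t_2^2)$, giving $(0:1:0)$ and, when $b_2\omega/A$ is a square in $\fq$, the two further points $(0:1:\pm\sqrt{b_2\omega/A})$; if $t_2=0$ then $\phi_3=X(AX^2-b_2t_1^2)$, giving $(1:0:0)$ and, when $b_2/A$ is a square, the two further points $(1:0:\pm\sqrt{b_2/A})$. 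Equivalently, each of the planes $t_1=0$ and $t_2=0$ cuts $\cS'$ in the line $X=0$ through $P'$ together with a (degenerate) conic, and that conic splits over $\fq$ for exactly one of the two planes, because $\omega$ is a non-square and $b_2A\neq0$. Hence we obtain four pairwise distinct solutions (separated by their $X$-coordinate and by which of $t_1,t_2$ vanishes), so $\alpha=4$.

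The one genuinely delicate point is this last count: one must be sure that separating the cases $X=0$ and $X\neq0$ neither loses nor repeats a solution, and --- the crux --- that exactly one of $b_2\omega/A$ and $b_2/A$ is a square in $\fq$; it is precisely the non-squareness of $\omega$ that forces the total to be $4$ rather than $2$ or $6$, making the statement sharp. Everything else is immediate from results already established: $\beta\in\{0,-1\}$ in the reducible case, $B=C=D=0$ when $\beta=0$, and $(A,B,C,D)\neq(0,0,0,0)$ by non-singularity of $\cC$.
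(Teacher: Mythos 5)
Your proof is correct and follows exactly the route the paper intends: the remark is stated without proof, and your argument is the natural elaboration of System~\eqref{sys:inter} together with the facts already established ($3A\beta+B=0$ in both degenerate cases, $A\neq0$, $b_2\neq0$, and $\omega$ a non-square forcing exactly one of $b_2/A$, $b_2\omega/A$ to be a square). In particular your count $2+2=4$ for $\beta=0$ is consistent with the value $\cS_q=q^2+3q+1$ asserted in Theorem~\ref{th:WeilSing}.
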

	
	We are ready to state the main theorem.
	
	\begin{thm} \label{th:WeilSing}
		Let $\cS$ be the irreducible cubic surface defined by Equation \eqref{eq:cubic1} with a singular point, say $ P=(0:0:\beta:1) $, and let $\beta_1= (1+b_1\beta)^2-b_2^2\beta^2 \omega$. The following are the only possibilities for $\cS_q=|\cS(\fq)|$.
		\begin{equation*}
			\cS_q=\begin{cases}
				\begin{split}
					&q^2+\alpha q+1, \mbox{ if } \beta \ne 0,-1\\
					&q^2+3q+1, \mbox{ if } \beta =0\\
					&q^2+q+1, \mbox{ if } \beta_1 \notin \square_{q}, \beta=-1\\
					&q^2+(\alpha-1)q+1, \mbox{ if } \beta_1 \in \square_{q}, \beta=-1
				\end{split}
			\end{cases}
		\end{equation*}
		where $\alpha \in \{0,2,4\}$ if $\beta_1 \notin \square_{q}$ and $\alpha \in \{2,4\}$ if $\beta_1 \in \square_{q}$.
	\end{thm}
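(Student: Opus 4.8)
The plan is to project $\cS$ from its unique double point $P=(0:0:\beta:1)$ and convert the count $\cS_q=|\cS(\fq)|$ into a count of lines through $P$. By Theorem~\ref{Th:singular1}, $P$ is a double point of $\cS$ defined over $\fq$; writing $F$ for the cubic form of $\cS$, the relations $F(P)=0$ and $\nabla F(P)=0$ make the restriction of $F$ to any line $\ell=\langle P,Q\rangle$ equal, in the homogeneous line parameter $(s:t)$ with $P\leftrightarrow(1:0)$, to $t^{2}\bigl(s\,h(Q)+t\,F(Q)\bigr)$, where $h$ is a non-zero scalar multiple of the quadratic form cutting out the tangent cone $T_P(\cS)$. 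Hence exactly one of the following holds: \emph{(a)} $\ell\not\subset T_P(\cS)$, and then $\ell\cap\cS$ is $P$ (with multiplicity two) together with one further point distinct from $P$; \emph{(b)} $\ell\subset T_P(\cS)$ but $\ell\not\subset\cS$, and then $\ell\cap\cS=\{P\}$; \emph{(c)} $\ell\subset\cS$, which forces $\ell\subset T_P(\cS)$. In case (a) the residual point is $\fq$-rational (as $\ell$, and the subdivisor $2P$, are $\fq$-rational inside a degree-$3$ divisor on $\ell$), and distinct type-(a) lines yield distinct residual points. Moreover, a line of $\cS$ through $P$ defined only over $\fqs$ meets its conjugate just in $P$, so contributes no new $\fq$-point; thus the only lines of $\cS$ through $P$ relevant to $\cS_q$ are the $\fq$-rational ones, whose number is the quantity $\alpha$ coming from the earlier analysis of System~\eqref{sys:inter}.

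Next I would assemble the master identity. There are $q^{2}+q+1$ lines through $P$ defined over $\fq$; let $\tau$ of them lie in $T_P(\cS)$, and $\alpha$ of those be contained in $\cS$, so that the $\fq$-rational lines of types (a), (b), (c) number $q^{2}+q+1-\tau$, $\tau-\alpha$, $\alpha$ respectively. Counting the $\fq$-points of $\cS$ along the pencil of lines through $P$, and adding $P$ itself, gives
\[
\cS_q \;=\; 1+(q^{2}+q+1-\tau)\cdot 1+(\tau-\alpha)\cdot 0+\alpha\cdot q \;=\; q^{2}+(\alpha+1)q+2-\tau .
\]

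It then remains to insert the value of $\tau$ — and the admissible $\alpha$ — prescribed by the shape of the tangent cone, all of which is supplied by the earlier results. If $\beta\neq 0,-1$ the tangent cone is a quadric cone, so $\tau=q+1$ and $\cS_q=q^{2}+\alpha q+1$. If $\beta=0$ it is a pair of $\fq$-rational planes, so $\tau=2q+1$, and by the remark after System~\eqref{sys:inter} one has $\alpha=4$, giving $\cS_q=q^{2}+3q+1$. If $\beta=-1$ it is a pair of planes, conjugate over $\fqs$ or $\fq$-rational according as $\beta_1$ is a non-square or a square in $\fq$ (up to a square factor, $\beta_1$ is the discriminant of the binary form $\phi_2(t_1,t_2,0)$), hence $\tau=1$ or $\tau=2q+1$ respectively; in the first sub-case the same remark forces $\alpha=0$, so $\cS_q=q^{2}+q+1$, while in the second the two $\fq$-points of $\{\phi_2=0\}\cap\{X=0\}$ give lines of $\cS$ through $P$, so $\alpha\geq 2$ and $\cS_q=q^{2}+(\alpha-1)q+1$. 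Finally $\alpha\in\{0,2,4\}$ always by the earlier theorem bounding $\alpha$, and the same count of lines inside $\{X=0\}$ shows $\alpha\in\{2,4\}$ whenever $\beta_1\in\square_q$; this is the last clause of the statement.

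The substantive work is in two places. First, making the projection dictionary watertight: that a type-(a) line contributes exactly one new $\fq$-point, that (a)–(c) are exhaustive and disjoint, and that no $\fq$-point is counted twice. Second — and this is where I expect the real difficulty — pinning down $\tau$ and the exact set of possible $\alpha$ in the two degenerate regimes $\beta=0$ and $\beta=-1$: this requires revisiting the rank of the tangent-cone matrix $\cT$ and the fine structure of System~\eqref{sys:inter}, with particular care for boundary behaviour such as $\beta_1=0$ or $\ch\fq=3$, where the double-root phenomena underlying System~\eqref{sy:derParzZ0} must be handled separately.
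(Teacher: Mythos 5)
Your proposal is correct and follows essentially the same route as the paper: the paper's own (much terser) proof is exactly the projection from the double point $P$, subtracting the lines of $T_P(\cS)$ from the $q^2+q+1$ lines through $P$ and adding $q$ for each line contained in $\cS$, with $\tau$ and $\alpha$ read off from the preceding tangent-cone and line-count results. Your master identity $\cS_q=q^2+(\alpha+1)q+2-\tau$ and the case-by-case substitution of $\tau\in\{q+1,\,2q+1,\,1\}$ reproduce the paper's argument, only spelled out more carefully.
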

	\begin{proof}
		This proof relies on the above results. In particular, since $P$ is a double point, every line passing through $ P $, not in $T_P(\cS)$,  meets $ \cS $ in exactly one point (different from $P$). Thus, we need to subtract from $q^2+q+1$ the number of lines contained in $T_P(\cS)$ through $P$ and add $q$ whenever one of these lines lie on $\cS$.
	\end{proof}
	
	We will use the following notation.
	\begin{itemize}
		\item[] $ \cS_q $ is the number of points defined over $\PG(3,q)$ of $\cS$; moreover we set $n_0$ and $n_\infty$ to be the number of the ones with $t_1=t_2=0$ and  $X=Z=0$ respectively.
	\end{itemize}
	
	\begin{lem}
		With the above notation, let $\cC$ be a conic defined by Equation \emph{(\ref{conic1})}. Then
		\[	E_q(\cC)=\frac{1}{2}(\cS_q-n_0-n_\infty)\]
	\end{lem}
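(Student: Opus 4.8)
The plan is to count in two ways the set
\[
\Sigma=\bigl\{(X,Y,Z,t_1,t_2)\in\fq^{5}\ :\ \text{\eqref{sistem1con} holds},\ (X,Y,Z)\neq(0,0,0),\ (t_1,t_2)\neq(0,0)\bigr\},
\]
obtaining $|\Sigma|=2(q-1)\,E_q(\cC)$ on one hand and $|\Sigma|=(q-1)\bigl(\cS_q-n_0-n_\infty\bigr)$ on the other; dividing by $2(q-1)$ then yields the statement.

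For the first equality, observe that if $(X,Y,Z,t_1,t_2)\in\Sigma$ then the chosen representative $P=(X,Y,Z)$ satisfies $P^{t}\cA P=(t_1^{2}+\omega t_2^{2})+\epsilon\cdot 2t_1t_2=(t_1+\epsilon t_2)^{2}$, which is a nonzero square of $\fqs$ because $(t_1,t_2)\neq(0,0)$ and $\epsilon\notin\fq$; by Lemma~\ref{lemA1} together with the normalisation of Remark~\ref{rk:quad} this forces the point $(X:Y:Z)$ to be external to $\cC$ (it is not on $\cC$, as $P^{t}\cA P\neq0$, and not internal, again by Lemma~\ref{lemA1}), and in particular it is not $(0:1:0)$. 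Conversely, I would fix an external point with a representative, note that $P^{t}\cA P$ is then a fixed nonzero square, and check that each of its $q-1$ nonzero scalar multiples — the $q-1$ representatives of that point — determines $(t_1,t_2)$ up to the sign in the square root $t_1+\epsilon t_2=\pm(\cdot)$, i.e.\ exactly two distinct nonzero pairs. Hence each external point contributes precisely $2(q-1)$ tuples to $\Sigma$.

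For the second equality I would use the projection $\rho(X,Y,Z,t_1,t_2)=(t_1,t_2,X,Z)$ that forgets $Y$. Since \eqref{eq:cubic1} is obtained by eliminating $Y$ between the two equations of \eqref{sistem1con} (the corresponding polynomial identity holding whether or not $b_2X+d_2Z$ vanishes), $\rho$ maps $\Sigma$ into $\cS$; and a point of $\Sigma$ with $X=Z=0$ would force $t_1^{2}+\omega t_2^{2}=0$, hence $t_1=t_2=0$, which is excluded. Thus $\rho(\Sigma)$ lies in the set $\widetilde U$ of nonzero $(t_1,t_2,X,Z)$ on $\cS$ with $(t_1,t_2)\neq(0,0)$ and $(X,Z)\neq(0,0)$, i.e.\ in the affine cone minus the origin over $U':=\cS(\fq)\setminus(U_0\cup U_\infty)$, where $U_0$, $U_\infty$ are the points of $\cS(\fq)$ with $t_1=t_2=0$, respectively $X=Z=0$. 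As $U_0$ and $U_\infty$ are disjoint, $|\widetilde U|=(q-1)(\cS_q-n_0-n_\infty)$, so it remains to show that $\rho\colon\Sigma\to\widetilde U$ is a bijection.

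This last step is where I expect the real work to be. I would invoke Lemma~\ref{lm:Sirred}: irreducibility of $\cS$ gives $b_1d_2\neq b_2d_1$, so the linear forms $b_1X+d_1Z$ and $b_2X+d_2Z$ vanish simultaneously only at $(0,0)$ and therefore at least one of them is nonzero at any point of $\widetilde U$. When $b_2X+d_2Z\neq0$ one recovers $Y$ from the second equation of \eqref{sistem1con}, and the relation $F=0$ then forces the first equation; when $b_2X+d_2Z=0$ (so $b_1X+d_1Z\neq0$) one recovers $Y$ from the first equation, and $F=0$ forces $2t_1t_2=a_2X^{2}+c_2XZ+e_2Z^{2}$, which is the second equation. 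In both cases $Y$ is uniquely determined and $(X,Y,Z)\neq(0,0,0)$ because $(X,Z)\neq(0,0)$, so $\rho$ is a bijection. The care needed lies precisely in this split into two subcases (where $Y$ is read off from the second equation in one case and from the first in the other) and in the bookkeeping on the two excluded strata $t_1=t_2=0$ and $X=Z=0$, which are exactly the points removed by $n_0$ and $n_\infty$; everything else is routine. Combining the two counts gives $2(q-1)E_q(\cC)=|\Sigma|=(q-1)(\cS_q-n_0-n_\infty)$, whence $E_q(\cC)=\tfrac12(\cS_q-n_0-n_\infty)$.
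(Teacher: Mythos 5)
Your proof is correct and follows essentially the same route as the paper's: points of $\cS(\fq)$ away from the strata $t_1=t_2=0$ and $X=Z=0$ are in two-to-one correspondence with the external points of $\cC$ in $\PG(2,q)$, whence the division by $2$ after subtracting $n_0$ and $n_\infty$. Your version is in fact more careful than the paper's at the elimination step — in particular the subcase $b_2X+d_2Z=0$, where $Y$ must be recovered from the first equation of \eqref{sistem1con} rather than the second, is glossed over in the original argument.
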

	\begin{proof}
		The points of $\cC$ can be obtained putting $\vartheta=0$ in the system (\ref{sistem1con}) whereas $E_q(\cC)$ is obtained counting the points of $\cS(\fq)$ with $\vartheta \ne 0$. This means that every point of $\cS(\fq)$ with $t_1=t_2=0$ is an $\fq$-rational point of $\cC$. Furthermore, we need to subtract the points with $X=Z=0$, since they correspond to $(0:1:0)$ which is on the conic.
		Note that for fixed $X,Y,Z$ we have either $0$ or $2$ solution for $(t_1,t_2)$ defined over $\fq$. The discriminant of the quadratic equation \eqref{eq:cubic1} in $t_1$ (or $t_2$) is actually different from $0$. This because $\omega$ is not a square in $\fq$.
		Thus, for every point  $ (X:Y:Z) $ of $\cC(\fq)$, we have two points $ (t_1:t_2:X:Z) $ of $\cS(\fq)$  so, after we subtracted the values of $n_0$ and $n_\infty$, we need to divide by two.
	\end{proof}
	
	\begin{lem}\label{lm:n0inf}
		With the above notation, we have $n_\infty=q+1$ and {\color{black} $n_0 \in \{0,1,2,3\}$}.
	\end{lem}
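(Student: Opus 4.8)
The plan is to handle the two assertions separately, each by a direct substitution into the defining equation \eqref{eq:cubic1} of $\cS$. For $n_\infty$ I would substitute $X=Z=0$ into \eqref{eq:cubic1}: every term of that equation is divisible by $X$ or by $Z$ --- clearly so for $2t_1t_2(b_1X+d_1Z)$, for $(t_1^2+\omega t_2^2)(b_2X+d_2Z)$, and for the cubic part $AX^3+BX^2Z+CXZ^2+DZ^3$. Hence the equation collapses to $0=0$, so every point $(t_1:t_2:0:0)$ with $(t_1,t_2)\ne(0,0)$ lies on $\cS$. These points form a projective line of $\PG(3,q)$, which has exactly $q+1$ points, giving $n_\infty=q+1$.

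For the bound on $n_0$ I would substitute $t_1=t_2=0$ into \eqref{eq:cubic1}. The summands $2t_1t_2(b_1X+d_1Z)$ and $(t_1^2+\omega t_2^2)(b_2X+d_2Z)$ both vanish, and what remains is the binary cubic equation
\[AX^3+BX^2Z+CXZ^2+DZ^3=0\]
in the homogeneous coordinates $(X:Z)$ of a $\PG(1,q)$. By the Remark immediately preceding the lemma (non-singularity of $\cC$), the tuple $(A,B,C,D)$ is not the zero vector, so this form is not identically zero; a nonzero homogeneous cubic in two variables has at most $3$ zeros in $\PG(1,q)$. As the points of $\cS(\fq)$ with $t_1=t_2=0$ are exactly these zeros, $n_0\in\{0,1,2,3\}$.

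No real obstacle arises here: both parts are one-line substitutions, the only external input being the previously established fact $(A,B,C,D)\ne(0,0,0,0)$. If finer control on the exact value of $n_0$ is needed later, the natural refinement is to distinguish the splitting types of the binary cubic $AX^3+BX^2Z+CXZ^2+DZ^3$ over $\fq$ (three rational roots, a rational root together with an irreducible quadratic factor, an irreducible cubic, or repeated roots), but the crude degree bound is all the stated lemma requires.
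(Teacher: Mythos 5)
Your proof is correct and follows essentially the same route as the paper: both reduce $n_0$ to counting the projective zeros of the binary cubic $AX^3+BX^2Z+CXZ^2+DZ^3$ (the paper just splits into the cases $A=0$ and $A\neq 0$ instead of invoking the degree bound for a nonzero binary form directly), and both observe that the line $X=Z=0$ lies entirely on $\cS$, giving $n_\infty=q+1$. Your explicit appeal to the remark that $(A,B,C,D)\neq(0,0,0,0)$ is exactly the external input the paper's argument also relies on.
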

	\begin{proof}
		The points of $\cS(\fq)$ with $t_1=0$ and $t_2=0$ can be obtained as follows:
		\begin{itemize}
			\item $A=0$. In this case we have at least the point $(0:0:1:0)$ and at most other two points, $(0:0:\beta:1)$, with $\beta$ solution of \[BX^2+CX+D=0.\]
			\item $A \neq 0$. The points are $(0:0:\lambda:1)$, with $\lambda$ that runs over the solution set of \[AX^3+BX^2+CX+D=0,\] that are at most three.
		\end{itemize}
		The computation of $n_\infty$ follows easily. Indeed, the number of points $(t_1:t_2:0:0)$, for $t_1,t_2 \in \fq$, is $q+1$.
	\end{proof}
	Now we are ready to establish the possible values for $E_q(C)$. The values of $\cS_q$ come from Theorems \ref{th:Weil} and \ref{th:WeilSing} , for $\cS$  non-singular and singular respectively.
	\begin{cor}\label{th:sigmaq1}
		Let $\cS_q=|\cS(\fq)|$. With the notations above:
		\[ E_q(\cC)= \frac{1}{2}(\cS_q-n_0-q-1). \]
	\end{cor}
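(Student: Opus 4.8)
The plan is to deduce the formula immediately from the two lemmas that precede it in this subsection, so the work is essentially bookkeeping. The first of them already establishes the identity $E_q(\cC) = \tfrac12\big(\cS_q - n_0 - n_\infty\big)$, obtained by sorting the $\fq$-rational points of $\cS$ in $\PG(3,q)$ according to whether $t_1 = t_2 = 0$, whether $X = Z = 0$, or neither: the points with $t_1=t_2=0$ (that is, $\vartheta=0$) are the $n_0$ points coming from $\cC(\fq)$, the points with $X=Z=0$ are the $n_\infty$ points corresponding to $(0:1:0)$, and each of the remaining points $(X:Y:Z)$ counted by $E_q(\cC)$ lifts to exactly two points $(t_1:t_2:X:Z)$ of $\cS(\fq)$ with $\vartheta\neq 0$ and $(X,Z)\neq(0,0)$ — the doubling coming from the two choices of $\vartheta$ with $X^t\cA X = \vartheta^2$, which are genuinely distinct in $\PG(3,q)$ precisely because $\omega \notin \square_q$ forces the discriminant in $t_1$ (equivalently $t_2$) of \eqref{eq:cubic1} to be nonzero. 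The second ingredient is Lemma \ref{lm:n0inf}, which pins down $n_\infty = q+1$: since every monomial of \eqref{eq:cubic1} is divisible by $X$ or by $Z$, the entire line $\{(t_1:t_2:0:0)\}$ lies on $\cS$, and it carries exactly $q+1$ rational points. Substituting $n_\infty = q+1$ into the identity gives $E_q(\cC) = \tfrac12(\cS_q - n_0 - q - 1)$, as claimed.

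Since both ingredients are already in hand, there is essentially no obstacle; the only point worth a moment's care is that the two loci being subtracted are disjoint in $\PG(3,q)$, so that no hidden inclusion–exclusion correction creeps into the bookkeeping. A point of $\cS$ with $t_1 = t_2 = 0$ and simultaneously $X = Z = 0$ would be the zero vector, which is not a point of projective space; hence the sets counted by $n_0$ and by $n_\infty$ are genuinely disjoint, the doubling above is applied to the exact complement of these two sets, and the subtraction $-n_0 - n_\infty = -n_0 - q - 1$ is legitimate. The concrete list of admissible values of $\cS_q$, and therefore of $E_q(\cC)$, is then obtained by feeding in Theorems \ref{th:Weil} and \ref{th:WeilSing} in the non-singular and singular cases respectively, together with the bound $n_0 \in \{0,1,2,3\}$ of Lemma \ref{lm:n0inf}; but that refinement belongs to the results that follow rather than to this corollary.
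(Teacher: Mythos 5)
Your argument is correct and is exactly the paper's route: the corollary is obtained by substituting $n_\infty = q+1$ from Lemma \ref{lm:n0inf} into the identity $E_q(\cC)=\tfrac12(\cS_q-n_0-n_\infty)$ of the preceding lemma. Your added observation that the loci $\{t_1=t_2=0\}$ and $\{X=Z=0\}$ are disjoint in $\PG(3,q)$ is a sensible sanity check that the paper leaves implicit.
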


	\begin{cor}\label{th:sigmaq}
		With the notation above, the possible values for $E_q(\cC)$, when $\cS$ is non-singular are the following
		\begin{itemize}
			\item 	$ E_q(\cC)= \frac{1}{2}(q^2+(\alpha-1)q-n_0) $, with $n_0=0,2$ and $\alpha \in \{-2,0,2,4\}$
			\item 	$ E_q(\cC)= \frac{1}{2}(q^2+(\alpha-1)q-n_0) $, with $n_0=1,3$ and $\alpha \in \{-1,1,3,5,7\}$
		\end{itemize}
	\end{cor}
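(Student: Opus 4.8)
The plan is to obtain the statement as a direct consequence of Corollary~\ref{th:sigmaq1}, the Weil bound (Theorem~\ref{th:Weil}), and Lemma~\ref{lm:n0inf}, combined with a single elementary parity observation. Since we are in the non-singular case, Theorem~\ref{th:Weil} applies and gives $\cS_q=q^2+\alpha q+1$ with $\alpha\in\{-2,-1,0,1,2,3,4,5,7\}$. Substituting this into the identity $E_q(\cC)=\tfrac12(\cS_q-n_0-q-1)$ of Corollary~\ref{th:sigmaq1} yields at once
\[
E_q(\cC)=\tfrac12\bigl(q^2+(\alpha-1)q-n_0\bigr),
\]
and Lemma~\ref{lm:n0inf} restricts $n_0$ to the set $\{0,1,2,3\}$. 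So far this already pins down $E_q(\cC)$ up to the choice of the pair $(\alpha,n_0)$ among $9\times 4$ a priori possibilities.

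Next I would use that $E_q(\cC)$ is a non-negative integer, because by definition it counts points of $\PG(2,q)$ external to $\cC$. Hence $q^2+(\alpha-1)q-n_0$ must be even. Since $q$ is odd we have $q^2\equiv 1$ and $(\alpha-1)q\equiv \alpha-1 \pmod 2$, so that
\[
q^2+(\alpha-1)q-n_0\equiv \alpha-n_0 \pmod 2,
\]
and therefore $\alpha\equiv n_0\pmod 2$. This is really the whole content of the corollary: the integrality constraint forces the parity of $\alpha$ to match that of $n_0$, cutting the nine admissible values of $\alpha$ down to the two advertised subsets.

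Finally I would split according to the parity of $n_0$. If $n_0\in\{0,2\}$ then $\alpha$ is even, so $\alpha\in\{-2,0,2,4\}$; if $n_0\in\{1,3\}$ then $\alpha$ is odd, so $\alpha\in\{-1,1,3,5,7\}$. In both cases $E_q(\cC)=\tfrac12(q^2+(\alpha-1)q-n_0)$ with the stated ranges, which is exactly the claim. There is no serious obstacle here: everything beyond the parity remark is bookkeeping from the earlier results, and the only point requiring a moment's care is to make sure one invokes Theorem~\ref{th:Weil} (rather than Theorem~\ref{th:WeilSing}) precisely because the hypothesis of the corollary is that $\cS$ is non-singular. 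Note also that the corollary asserts only that these are the \emph{possible} values; no realizability of each pair $(\alpha,n_0)$ needs to be established at this stage.
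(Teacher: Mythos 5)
Your proof is correct and follows the same route as the paper: substitute Theorem~\ref{th:Weil} into Corollary~\ref{th:sigmaq1}, restrict $n_0$ via Lemma~\ref{lm:n0inf}, and use the parity of $q^2+(\alpha-1)q-n_0$ (with $q$ odd) to force $\alpha\equiv n_0\pmod 2$. The paper's own proof is exactly this parity observation, stated in one line; yours simply spells it out.
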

	\begin{proof}
		We just need to study the parity of $ q^2+(\alpha-1)q-n_0 $ to establish the possible values for $n_0$ and $\alpha$.
	\end{proof}

	\section{Reducible case}\label{s:red}
	
	Throughout this section we will suppose $b \ne 0$ and $\frac{d}{b} \in \fq$ or equivalently $b_1d_2=d_1b_2$. The case $d \ne 0$ is analogous.  \\
	Thus, from the proof of Lemma \ref{lm:Sirred} we know that $\cS$ splits as  \[\cS = \Pi \cup \cQ,\] where $\Pi$ is the plane defined by $b_1X+d_1Z=0$ (or $b_2X+d_2Z=0$) and $\cQ$ is a is a possibly degenerate quadric surface of $\PG(3,q)$ in $t_1,t_2,X,Z$.\\
	The equation of $\cS$ is
	\begin{equation*}
		(b_1 X +d_1 Z)(b_2 t_1^2 - 2 b_1 t_1 t_2 + b_2 \omega t_2^2 + ( b_1a_2- a_1 b_2) X^2 +( b_1 c_2-
		b_2 c_1) X Z +( b_1 e_2- b_2 e_1) Z^2 )
	\end{equation*}
	
	We study the two factors separately. Remember that $\cC$ is defined by equation \eqref{conic1}.
	
	\begin{lem}\label{lm:pi}
		The line of $\PG(2,q)$ defined by $b_1X + d_1Z=0$ is the tangent line to $\cC$ at the point $(0:1:0)$. In particular, it contains exactly $q$ external points to $\cC$.
	\end{lem}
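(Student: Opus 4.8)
The plan is to proceed in two steps: first identify the line $\ell\colon b_1X+d_1Z=0$ with the tangent to $\cC$ at $(0:1:0)$, and then count the $\fq$-rational points of $\ell$ according to their position relative to $\cC$. For the identification, recall (as already used in the proof of Lemma \ref{L2}) that the polar line of $\cC$ at $Q=(0:1:0)$, computed from Equation \eqref{conic1}, has equation $bX+dZ=0$; here we use $\Char\fq\neq 2$. Since we are in the reducible case we have $d/b\in\fq$, say $d=\mu b$ with $\mu\in\fq$. Writing $b=b_1+\epsilon b_2$ then forces $d_1=\mu b_1$ and $d_2=\mu b_2$, so that $b_iX+d_iZ=b_i(X+\mu Z)$ for $i=1,2$. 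As $b\neq 0$, at least one of $b_1,b_2$ is nonzero, so the equation $b_1X+d_1Z=0$ (or $b_2X+d_2Z=0$, which is why the statement offers the alternative) defines precisely the line $X+\mu Z=0$, i.e. the tangent $t_Q$ to $\cC$ at $Q$; in particular $\ell$ has coefficients in $\fq$ and hence is a line of $\PG(2,q)$.

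For the count, note that $\ell$ contains exactly $q+1$ points of $\PG(2,q)$, one of which is $Q=(0:1:0)$, and $Q\in\cC$. Being a tangent line, $\ell$ meets $\cC$ in the single point $Q$ (with multiplicity $2$), so the remaining $q$ points of $\ell\cap\PG(2,q)$ do not lie on $\cC$. Since $q$ is odd, a point of $\PG(2,q^2)$ not on $\cC$ lies on either $0$ or $2$ tangent lines of $\cC$; each of these $q$ points lies on $\ell$, hence on a tangent, so it must lie on exactly two tangents, i.e. it is external. Therefore $\ell$ contains exactly $q$ external points to $\cC$, as claimed.

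This argument is essentially bookkeeping; there is no real obstacle. The only two points deserving a moment's care are: (i) the reducible hypothesis may force $b_1=0$, in which case one passes to the factor $b_2X+d_2Z=0$ (and, if also $d=0$, the tangent is simply $X=0$); and (ii) the invocation of the odd-order fact that a point off a conic lies on $0$ or $2$ tangents, which is exactly what turns ``on a tangent line and not on $\cC$'' into ``external''.
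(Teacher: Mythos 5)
Your proof is correct and follows essentially the same route as the paper: identify $b_1X+d_1Z=0$ with the tangent $bX+dZ=0$ at $(0:1:0)$ using $d/b\in\fq$, and then note that the $q$ points of this line in $\PG(2,q)$ other than the tangency point are external. Your substitution $d=\mu b$ is a slightly cleaner way to make the identification than the paper's rationalization computation (which implicitly divides by $b_1b_2$), and your explicit treatment of the $b_1=0$ edge case and of the ``$0$ or $2$ tangents through a point off the conic'' fact fills in details the paper leaves unstated.
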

	\begin{proof}
		By a straightforward computation 	\[\frac{d}{b}=\frac{d_1 b_1-\omega d_2b_2}{b_1^2-\omega b_2^2}=\frac{1}{b_2b_1}\frac{b_1^2d_2b_1-\omega b_2^2d_1b_2}{b_1^2-\omega b_2^2}=\frac{d_1b_2}{b_1b_2}=\frac{d_1}{b_1}.\] This means that the lines $bX + dZ=0$ and $b_1X + d_1Z=0$ are actually the same. In particular this is the tangent line to $\cC$ at $(0:1:0)$.
	\end{proof}
	
	\begin{obs} \label{rk:Pi}
		Lemma {\ref{lm:pi}} implies that the plane $\Pi$ is contributing with exactly $q$ solutions to System \eqref{sistem1con}, see Remark {\ref{rk:quad}}.
	\end{obs}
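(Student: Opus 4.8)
The plan is to read the claim as the assertion that, in the reducible case $\cS=\Pi\cup\cQ$, exactly $q$ of the solutions of System \eqref{sistem1con} relevant to $E_q(\cC)$ — that is, exactly $q$ of the external points of $\cC$ lying in $\PG(2,q)$ — come from the plane $\Pi$, namely those lying on the line $\ell\colon b_1X+d_1Z=0$; the contribution of $\Pi$ is then precisely these $q$ points. The argument is essentially an unpacking of Lemma \ref{lm:pi} together with the normalisation of Remark \ref{rk:quad}.

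First I would recall, from Lemma \ref{lm:pi}, that $\ell$ is the tangent line to $\cC$ at $(0:1:0)$ and that, written in $\PG(2,q)$-coordinates, it is the line $bX+dZ=0$; so $\ell$ has $q+1$ points over $\fq$, one of them being $(0:1:0)\in\cC$. Next I would evaluate $X^t\cA X$ along $\ell$: parametrising the $q$ points of $\ell$ different from $(0:1:0)$ as $(d:y:-b)$ with $y\in\fq$ (legitimate since $b\neq 0$), a direct substitution into the quadratic form in \eqref{conic1} shows that the contributions of the terms $bXY$ and $dYZ$ cancel, so that $X^t\cA X=ad^2-bcd+eb^2$ independently of $y$. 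By Remark \ref{rk:quad} this value is a square of $\fqs$, and it is nonzero, for otherwise $(d:y:-b)$ would be a point of $\cC$ on its own tangent $\ell$ distinct from $(0:1:0)$, contradicting the irreducibility of $\cC$. Hence by Lemma \ref{L2} all these $q$ points are external to $\cC$, and each is a genuine solution $(X:Y:Z)=(d:y:-b)$ of System \eqref{sistem1con} with $\vartheta$ a square root of $ad^2-bcd+eb^2$.

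Finally I would match these solutions with $\Pi$: a point of $\cS$ lies on $\Pi$ precisely when $b_1X+d_1Z=0$, i.e.\ when its $(X:Z)$-part lies on $\ell$; among the $\fq$-points with this property, those with $(X,Z)\neq(0,0)$ are exactly the $q$ external points found above, while $(X,Z)=(0,0)$ yields only $(0:1:0)\in\cC$, the point lost in passing to the cubic surface and detected by $\vartheta=0$. Therefore $\Pi$ contributes exactly $q$ solutions to System \eqref{sistem1con}.

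The only point requiring care — where I expect the minor friction — is that in the reducible case the denominator $b_2X+d_2Z$ used in Observation \ref{ob:EandS} to recover $Y$ from the second equation of \eqref{sistem1con} vanishes identically along $\ell$, so $\Pi$ is not seen through the cubic-surface reduction in the usual way; one has to argue directly from System \eqref{sistem1con} that along $\ell$ the value of $X^t\cA X$ depends only on $(X:Z)$ — which is exactly the cancellation of the $bXY$ and $dYZ$ terms computed above — so that being external is constant on $\ell\setminus\{(0:1:0)\}$. With that observation in hand the count is immediate.
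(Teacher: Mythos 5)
Your proof is correct and follows essentially the same route the paper (implicitly) takes: Remark \ref{rk:Pi} is stated as a direct consequence of Lemma \ref{lm:pi} together with the normalisation of Remark \ref{rk:quad}, and your computation that $X^t\cA X$ is constantly equal to $ad^2-bcd+b^2e$ along the tangent line is exactly the content of Lemmas \ref{lm:pi} and \ref{L2}. The only cosmetic slip is that $(d:y:-b)$ with $y\in\fq$ parametrises the $\fq$-rational points of the tangent only after rescaling $y$ by $b$ (relevant when $b\notin\fq$); since the value of the form along the line does not depend on $Y$, this changes nothing, and your closing observation that the $Y$-recovery of \eqref{eq:y1} degenerates identically on $\Pi$ — so that the contribution of $\Pi$ must be counted directly from System \eqref{sistem1con} rather than through the $2$-to-$1$ correspondence used for $\cQ$ — is a worthwhile point the paper leaves implicit.
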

	
	From now on we focus on the quadric surface $\cQ$, defined by
	\begin{equation}\label{quadric}
		b_2 t_1^2 - 2 b_1 t_1 t_2 + b_2 \omega t_2^2 + ( b_1a_2- a_1 b_2) X^2 +( b_1 c_2-b_2 c_1) X Z +( b_1 e_2- b_2e_1) Z^2=0
	\end{equation}
	
	First, note that if $ b_1a_2- a_1 b_2=0 $, $ b_1 c_2-
	b_2 c_1=0 $ and $ b_1 e_2- b_2 e_1=0 $ then the conic $\cC$ is defined over $\fq$, so we can skip it now (see Section \ref{sec:max}).\\
	One associate matrix of $\cQ$ is 
	\begin{equation*}
		M=\begin{pmatrix}
			b_2 & -b_1 & 0& 0 \\
			-b_1 & b_2 \omega& 0&0\\
			0&0&a'&\frac{c'}{2}\\
			0&0&\frac{c'}{2}& e'
			
		\end{pmatrix},
	\end{equation*}
	
	where $a'= b_1a_2- a_1 b_2$, $c'= b_1 c_2-b_2 c_1$ and $e'=b_1 e_2- b_2 e_1$.
	As mentioned above, we can assume that at least one of  $a',c'$ and $ e' $ is non-zero.
	\begin{lem}\label{lm:carquad}
		Let $\delta':=c'^2-4a'e'$ and $\delta=b_2^2 \omega - b_1^2$.
		\begin{itemize}
			\item if $\delta'=0$ then $\cQ$ is a quadric cone with vertex $v=\begin{cases}
				(0:0:-c':2 a'), \mbox{ if } a' \ne 0, \\
				(0:0:1:0), \mbox{ if } a'=0
			\end{cases}.$
			\item if $\delta'\neq 0$ then $\cQ$ is a non-singular elliptic or hyperbolic quadric;
		\end{itemize}
	\end{lem}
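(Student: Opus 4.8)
The plan is to analyze the $4\times 4$ block-diagonal matrix $M$ directly, exploiting the fact that the quadratic form of $\cQ$ splits as a sum of a binary form in $t_1,t_2$ and a binary form in $X,Z$. First I would observe that the block $\begin{pmatrix} b_2 & -b_1 \\ -b_1 & b_2\omega \end{pmatrix}$ has determinant $b_2^2\omega-b_1^2=\delta$, which is nonzero: since $\omega$ is a nonsquare in $\fq$ and $(b_1,b_2)\neq(0,0)$ (recall $b_2\neq 0$ in the reducible case), $b_1^2=b_2^2\omega$ is impossible. Hence this block is always nonsingular, and the singularity of $\cQ$ is governed entirely by the lower $2\times 2$ block $N=\begin{pmatrix} a' & c'/2 \\ c'/2 & e' \end{pmatrix}$, whose determinant is $a'e'-c'^2/4 = -\delta'/4$.

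Next I would split into the two cases. If $\delta'=0$ then $\det N=0$, so $N$ has rank at most $1$ (it is not the zero matrix since at least one of $a',c',e'$ is nonzero — in fact if $\delta' = 0$ and $N=0$ then $c'=0$ and $a'e'=0$, and one checks the remaining nonzero entry still forces rank $1$; more carefully, $\delta'=0$ with $N=0$ forces $a'=e'=c'=0$, excluded). Thus $M$ has rank $3$, so $\cQ$ is a quadric cone. To locate the vertex, I solve $Mv=0$: the first two coordinates must vanish (the top block is invertible), and $(X,Z)$ must lie in the kernel of $N$. When $a'\neq 0$, the kernel of the rank-one matrix $N$ is spanned by $(-c':2a')$ (using $c'^2=4a'e'$ to verify $N(-c',2a')^t = (-a'c'+a'c', -c'^2/2+2a'e')^t = (0, (4a'e'-c'^2)/2)^t = 0$); when $a'=0$, then $\delta'=c'^2=0$ gives $c'=0$, so $N=\mathrm{diag}(0,e')$ with $e'\neq 0$, whose kernel is spanned by $(1:0)$. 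This yields exactly the stated vertices $(0:0:-c':2a')$ and $(0:0:1:0)$.

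Finally, if $\delta'\neq 0$ then $\det N\neq 0$, so $\det M = \delta\cdot(-\delta'/4)\neq 0$, hence $\cQ$ is a non-singular quadric in $\PG(3,q)$; since a non-degenerate quadric of $\PG(3,q)$ in odd characteristic is either elliptic or hyperbolic, the claim follows. I would conclude by noting that which of the two types occurs depends on whether $\det M$ (equivalently $-\delta\delta'$, up to square factors) is a square or nonsquare in $\fq$, but the statement only asks for the dichotomy, so this refinement can be deferred to the subsequent analysis.

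I do not expect any serious obstacle here: the argument is essentially the block-diagonal structure of $M$ plus the observation that $\delta\neq 0$ always. The only point requiring mild care is the degenerate sub-case $a'=0$ within $\delta'=0$, where one must notice that $\delta'=0$ then forces $c'=0$ as well, so that the vertex computation still goes through and the rank of $M$ is genuinely $3$ (not less); handling this cleanly is the main thing to get right.
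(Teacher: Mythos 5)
Your proof is correct and follows essentially the same route as the paper: both hinge on the block-diagonal structure of $M$, the factorization $\det M=-\tfrac{1}{4}\delta\delta'$, and the observation that $\delta=b_2^2\omega-b_1^2\neq 0$ because $\omega$ is a nonsquare, after which the classification of quadrics in $\PG(3,q)$ finishes the job. The only difference is that you explicitly verify the vertex by computing the kernel of the lower block (including the degenerate sub-case $a'=0\Rightarrow c'=0$), a detail the paper states without proof.
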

	\begin{proof}
		The determinant of $M$ is $\Delta=-\frac{1}{4} \delta \delta'=\frac{1}{4}(b_2^2 \omega - b_1^2)(4a' e' -c'^2)$. Since $\omega$is a non-square of $\fq$ and $b \neq 0$, we have $\Delta=0$ only when $ \delta'=0 $. The proof follows from the classification of quadric surfaces. See for example \cite[pg. 14]{hirschfeld1985finite}.
	\end{proof}

	\begin{lem}\label{lm:Srid}
		We have
		\[E_q(\cC)=\frac{1}{2}\big(|\cQ|-|\cQ_0|-|\Pi \cap \cQ|+|\cQ_0 \cap \Pi|\big)+ q, \]
		where  $\cQ_0=\{(t_1,t_2,X,Z) \in \cQ | t_1=t_2=0\}$.
	\end{lem}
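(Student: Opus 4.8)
The plan is to keep the plane component $\Pi$ and the quadric component $\cQ$ of $\cS=\Pi\cup\cQ$ apart: $\Pi$ contributes a fixed number of external points via the tangent line at $(0:1:0)$, while the remaining external points are counted two-to-one by the points of $\cQ$ lying off $\Pi$ and off the line $t_1=t_2=0$.

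First I would invoke Lemma~\ref{lm:pi}: the plane $\Pi$ corresponds to the tangent line $\tau$ of $\cC$ at $(0:1:0)$, and $\tau$ carries exactly $q$ external points. For $X,Z\in\fq$ the condition $bX+dZ=0$ (i.e.\ $P$ on $\tau$) is equivalent to $b_2X+d_2Z=0$, since $b_1d_2=b_2d_1$; hence an external point $P=(X:Y:Z)$ not on $\tau$ has $b_2X+d_2Z\neq0$, so the substitution \eqref{eq:y1} is licit at $P$. By Lemma~\ref{lemA1} together with Remark~\ref{rk:quad}, such a $P$ is external exactly when $P^t\cA P=\vartheta^2$ for some $\vartheta=t_1+\epsilon t_2\in\fqs\setminus\{0\}$, which realizes $P$ as the image of the point $(t_1:t_2:X:Z)$ of $\cS$; this point lies off $\Pi$ because $b_2X+d_2Z\neq0$. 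Conversely, every point of $\cS\setminus\Pi$ with $(t_1,t_2)\neq(0,0)$ produces, through \eqref{eq:y1}, an external point off $\tau$. For fixed $P$ there are exactly two admissible $\vartheta$, namely $\pm\vartheta_0$, and since $q$ is odd and $(t_1,t_2)\neq(0,0)$ these give two distinct points of $\PG(3,q)$; so the correspondence is exactly two-to-one. Therefore the number of external points off $\tau$ equals $\tfrac12 N$, where $N$ counts the points of $\cS\setminus\Pi$ with $(t_1,t_2)\neq(0,0)$.

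The remaining step is inclusion--exclusion. Using $\cS=\Pi\cup\cQ$, the points of $\cS$ off $\Pi$ form $\cQ\setminus(\Pi\cap\cQ)$, of size $|\cQ|-|\Pi\cap\cQ|$; those among them with $t_1=t_2=0$ form $\cQ_0\setminus(\cQ_0\cap\Pi)$, of size $|\cQ_0|-|\cQ_0\cap\Pi|$ (recall $\cQ_0=\cQ\cap\{t_1=t_2=0\}$). Hence $N=|\cQ|-|\Pi\cap\cQ|-|\cQ_0|+|\cQ_0\cap\Pi|$, and adding back the $q$ external points on $\tau$ gives $E_q(\cC)=q+\tfrac12 N=\tfrac12\big(|\cQ|-|\cQ_0|-|\Pi\cap\cQ|+|\cQ_0\cap\Pi|\big)+q$, as asserted.

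The only delicate point --- the main obstacle --- is the bookkeeping around $\Pi$: a point of $\cS$ lying on $\Pi$ projects to a point $(X:Y:Z)$ with $b_2X+d_2Z=0$, at which $Y$ is not recovered by \eqref{eq:y1}, so it must not enter the two-to-one count; its contribution is subsumed in the flat $q$ coming from Lemma~\ref{lm:pi}. One has to verify that this dichotomy is exhaustive and disjoint, namely that every external point either lies on $\tau$ (hence is one of those $q$) or has $b_2X+d_2Z\neq0$ (hence is counted off $\Pi$), with no overlap; granting this, the rest of the computation is entirely routine.
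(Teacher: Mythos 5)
Your argument is correct and is essentially the paper's own proof: the paper likewise takes the $q$ external points supplied by the tangent line $\Pi$ via Lemma~\ref{lm:pi} and then counts the remaining external points two-to-one by the points of $\cQ$ lying outside $\Pi$ and outside $\cQ_0$, leaving the inclusion--exclusion implicit. Your write-up simply makes the two-to-one correspondence through \eqref{eq:y1} and the set-theoretic bookkeeping explicit.
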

	\begin{proof}
		We have already seen that $\Pi$ gives its contribution of $q$ points to $E_q(\cC)$. The remaining points that contribute to $E_q(\cC)$ are those on $\cQ$ not in $\Pi$ nor $\cQ_0$ (as the points of $\cQ_0$ correspond to points of $\cC$), so the equation follows easily.
	\end{proof}
	
	From the previous lemma, we need to understand better the mutual position between $\Pi$, $\cQ$, and $\cQ_0$ to achieve our goal.

	\begin{lem}
		$\Pi$ meets $\cQ_0$  if and only if $\kappa=0$, where \[\kappa := a'd_1^2-c'd_1b_1+e'b_1^2,\]
		in which case $|\Pi \cap \cQ_0|=1$.
		Furthermore, when $\delta'=0$, $\Pi \cap \cQ_0$ is the vertex of the quadric cone $\cQ$, if $a' \ne 0$, and the empty set, if $a'=0$.
	\end{lem}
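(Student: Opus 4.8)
The plan is to push the whole question down to the line $\ell_\infty := \{(0:0:X:Z)\colon (X:Z)\in\PG(1,q)\}$ of $\PG(3,q)$, since $\cQ_0=\cQ\cap\ell_\infty$ by definition and the only part of $\Pi$ that can meet $\cQ_0$ also lies on $\ell_\infty$. First I would observe that setting $t_1=t_2=0$ in Equation~\eqref{quadric} kills every term in $t_1,t_2$, so that $\cQ_0$ is exactly the zero set on $\ell_\infty$ of the binary quadratic form $\Phi(X,Z):=a'X^2+c'XZ+e'Z^2$, which is not identically zero because at least one of $a',c',e'$ has been assumed nonzero. Next, the plane $\Pi$ is cut out by the linear form $b_1X+d_1Z$ --- one may take $b_1\neq 0$, as in the proof of Lemma~\ref{lm:pi}; if $b_1=d_1=0$ one uses the proportional form $b_2X+d_2Z$ and the matching version of $\kappa$, the argument being identical --- and since $(b_1,d_1)\neq(0,0)$ this form has a single zero on $\ell_\infty$, namely $P_\Pi:=(0:0:-d_1:b_1)$. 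Hence $\Pi\cap\cQ_0=(\Pi\cap\ell_\infty)\cap\cQ=\{P_\Pi\}\cap\cQ$, so it is either $\{P_\Pi\}$ or empty.

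The first assertion now reduces to evaluating $\Phi$ at $P_\Pi$: one gets $\Phi(-d_1,b_1)=a'd_1^2-c'd_1b_1+e'b_1^2=\kappa$, so $P_\Pi\in\cQ$ --- that is, $\Pi$ meets $\cQ_0$ --- precisely when $\kappa=0$, and in that case $\Pi\cap\cQ_0=\{P_\Pi\}$ is a single point. For the supplementary statement I would specialize to $\delta'=0$, where by Lemma~\ref{lm:carquad} $\cQ$ is a quadric cone whose vertex $v$ lies on $\ell_\infty$ in both sub-cases. If $a'\neq 0$, the relation $\delta'=c'^2-4a'e'=0$ makes $\Phi$ a perfect square, $\Phi(X,Z)=\tfrac{1}{4a'}(2a'X+c'Z)^2$, whose only zero on $\ell_\infty$ is $(0:0:-c':2a')=v$; thus $\cQ_0=\{v\}$, and $\Pi\cap\cQ_0$, whenever it is nonempty, equals the vertex. (As a check: substituting $e'=c'^2/4a'$ gives $\kappa=\tfrac{1}{4a'}(2a'd_1-c'b_1)^2$, which vanishes exactly when $v\in\Pi$.) If instead $a'=0$, then $\delta'=0$ forces $c'=0$, hence $e'\neq 0$, so $\Phi=e'Z^2$ and $\cQ_0=\{(0:0:1:0)\}$; as $b_1\neq 0$, the point $P_\Pi=(0:0:-d_1:b_1)$ is not $(0:0:1:0)$, so $\Pi\cap\cQ_0=\emptyset$, consistently with $\kappa=e'b_1^2\neq 0$.

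I do not expect a real obstacle: the only content is the observation that $\cQ_0$ and $\Pi\cap\ell_\infty$ both live on the single line $\ell_\infty$, after which the entire intersection is governed by one binary quadratic form tested at one point, and the elliptic/hyperbolic alternative for $\cQ$ when $\delta'\neq 0$ never intervenes. The one thing needing care --- and the only place a case distinction is needed --- is the bookkeeping of which of the two proportional forms $b_1X+d_1Z$, $b_2X+d_2Z$ is taken as the equation of $\Pi$, since the displayed expression for $\kappa$ must be the one attached to that form; fixing $b_1\neq 0$ at the start (already implicit through Lemma~\ref{lm:pi}) settles this.
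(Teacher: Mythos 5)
Your proof is correct and follows essentially the same route as the paper's (which is much terser, reducing everything to the single candidate point $(0:0:-d_1:b_1)$ and leaving the evaluation of the binary form as a ``standard computation''). You are in fact somewhat more careful: you make explicit the identification $\kappa=\Phi(-d_1,b_1)$, verify that in the cone case with $a'\neq 0$ the unique point of $\cQ_0$ is the vertex, and flag the degenerate possibility $b_1=d_1=0$, which the paper handles only implicitly.
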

	\begin{proof}
		The plane $\Pi$ meets $\cQ_0$ only at one point, that is $(0:0:-d_1:b_1)$. Note that $\delta'=0$ and $ a'=0$ imply $c'=0$. Thus, $\kappa$ needs to be different from zero otherwise we have $e'=0$ too.
		The claim follows from standard computation.
	\end{proof}
	\begin{cor}
		We have
		\begin{equation*}
			|\cQ_0 \cap \Pi |= \begin{cases}
				\begin{aligned}
					&0, \mbox{ if } \kappa \neq 0\\
					&1,  \mbox{ if } \kappa = 0
				\end{aligned}
			\end{cases}
		\end{equation*}
	\end{cor}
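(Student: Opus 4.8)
The plan is to deduce the statement directly from the preceding Lemma, of which this corollary is simply a tabulation. First I would observe that set intersection is symmetric, so $|\cQ_0\cap\Pi|=|\Pi\cap\cQ_0|$, and it therefore suffices to evaluate $|\Pi\cap\cQ_0|$. The Lemma asserts that $\Pi$ meets $\cQ_0$ if and only if $\kappa:=a'd_1^2-c'd_1b_1+e'b_1^2=0$, and that whenever this happens the intersection consists of the single point $(0:0:-d_1:b_1)$.

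From this the two branches of the claimed formula follow at once. If $\kappa\neq0$, the ``only if'' direction of the Lemma gives $\Pi\cap\cQ_0=\emptyset$, hence $|\cQ_0\cap\Pi|=0$. If $\kappa=0$, then $\Pi$ does meet $\cQ_0$, and the count in the Lemma yields $|\Pi\cap\cQ_0|=1$, hence $|\cQ_0\cap\Pi|=1$. This is exactly the case distinction in the statement.

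There is essentially no obstacle: all the content sits in the Lemma, whose proof in turn is the one-line check that the unique point of $\Pi$ with $t_1=t_2=0$, namely $(0:0:-d_1:b_1)$, satisfies the quadric equation \eqref{quadric} precisely when $\kappa=0$. A self-contained argument would just repeat that substitution. The only point worth keeping in mind — already dealt with in the Lemma — is that in the degenerate sub-case $\delta'=0$ with $a'=0$ one has $c'=0$, so $\kappa=e'b_1^2$, which forces $\kappa\neq0$ since otherwise $e'=0$ and $\cC$ would be defined over $\fq$; this is consistent with $\Pi\cap\cQ_0$ being empty there, in agreement with the earlier description of the vertex of the cone.
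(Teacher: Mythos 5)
Your proposal is correct and matches the paper's intent exactly: the corollary is stated without proof precisely because it is an immediate restatement of the preceding lemma, which is the same deduction you make. The extra remark about the degenerate sub-case $\delta'=0$, $a'=0$ is a harmless bonus consistent with the lemma's own proof.
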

	
	\begin{lem}
		\begin{equation*}
			|\cQ_0|=\begin{cases}	\begin{aligned}	&0, \mbox{ if } \delta' \notin \square_q\\&1, \mbox{ if } \delta'=0\\	&2, \mbox{ if }\delta' \in \square_q	\end{aligned}\end{cases}
		\end{equation*}
	\end{lem}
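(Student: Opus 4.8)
The plan is to substitute $t_1=t_2=0$ directly into Equation \eqref{quadric}: the points of $\cQ_0$ are exactly the points $(0:0:X:Z)$ of $\PG(3,q)$ whose last two coordinates satisfy the binary quadratic equation
\[
a'X^2 + c'XZ + e'Z^2 = 0,
\]
so $|\cQ_0|$ equals the number of points of $\PG(1,q)$ on this degenerate conic of the line $t_1=t_2=0$. Since by our standing assumption at least one of $a',c',e'$ is non-zero, the form $\Phi(X,Z):=a'X^2+c'XZ+e'Z^2$ is not identically zero, so the statement reduces to the classical count of projective zeros of a non-zero binary quadratic form over $\fq$ in terms of its discriminant $\delta'=c'^2-4a'e'$ (recall $q$ is odd).

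First I would treat the non-degenerate case $\delta'\neq 0$. If $a'\neq 0$, I dehomogenise by setting $Z=1$ (the value $Z=0$ forces $X=0$, which is not a point) and complete the square, obtaining $\bigl(X+\tfrac{c'}{2a'}\bigr)^2 = \tfrac{\delta'}{4a'^2}$; the right-hand side is a non-zero square, zero, or a non-square exactly when $\delta'$ is, and since $\delta'\neq 0$ this gives $|\cQ_0|=2$ or $0$ according as $\delta'\in\square_q$ or $\delta'\notin\square_q$. If instead $a'=0$, then $\delta'=c'^2$ is automatically a non-zero square and $\Phi=Z(c'X+e'Z)$ factors into two distinct linear forms, so $|\cQ_0|=2$, consistent with the claim.

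Next I would treat the degenerate case $\delta'=0$. If $a'\neq 0$ then $\Phi=a'\bigl(X+\tfrac{c'}{2a'}Z\bigr)^2$ is a non-zero scalar times the square of a linear form, whose only projective zero is $(c':-2a')$. If $a'=0$ then $\delta'=0$ forces $c'=0$, hence $e'\neq 0$ and $\Phi=e'Z^2$, whose only projective zero is $(1:0)$. In both sub-cases $|\cQ_0|=1$. Assembling the three cases yields the stated formula.

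All the computations are elementary; the only point requiring a little care is that one cannot dehomogenise uniformly with respect to $Z$, because $a'$ may vanish, so the sub-cases $a'=0$ must be handled separately by factoring $\Phi$ directly. This is the only mild obstacle, and it is easily dispatched as above.
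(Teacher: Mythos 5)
Your proof is correct and follows essentially the same route as the paper: restrict the quadric to $t_1=t_2=0$ and count the projective zeros of the binary form $a'X^2+c'XZ+e'Z^2$ according to the discriminant $\delta'$. The paper simply cites the standard table in Hirschfeld for this count, whereas you work it out explicitly (correctly handling the sub-case $a'=0$), so your argument is just a self-contained version of the same computation.
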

	\begin{proof}
		This result follows from standard theory. See \cite[Table 15.5]{hirschfeld1985finite} for more details.
	\end{proof}
	
	We are ready to describe the situation for every type of $\cQ$.

	\begin{thm}\label{th:Qs}
		Let $\cS=\Pi \cup \cQ$, with $\cQ$ quadric cone \emph{($\delta'=0$)}. Then
		\[E_q(\cC)=\begin{cases}
			\begin{aligned}
				\frac{1}{2}&(q^2-q)+q, \mbox{ if }\kappa =0, \delta \in \square_{q}\\
				\frac{1}{2}&(q^2+q)+q, \mbox{ if }\kappa =0, \delta \notin \square_{q}\\
				\frac{1}{2}&(q^2-1)+q, \mbox{ if } \kappa\ne 0\\
			\end{aligned}
		\end{cases}, \quad \delta=b_2^2\omega-b_1^2.\]
	\end{thm}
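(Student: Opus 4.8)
The plan is to apply Lemma~\ref{lm:Srid}, which reduces the computation of $E_q(\cC)$ to the four quantities $|\cQ|$, $|\cQ_0|$, $|\Pi\cap\cQ|$ and $|\cQ_0\cap\Pi|$, and then to evaluate each of them in the case where $\cQ$ is a quadric cone, i.e.\ $\delta'=0$. By the lemma immediately preceding this theorem, $|\cQ_0|=1$ when $\delta'=0$, and by the corollary on $|\cQ_0\cap\Pi|$ we have $|\cQ_0\cap\Pi|\in\{0,1\}$ according as $\kappa\ne0$ or $\kappa=0$. So the two genuinely new ingredients to pin down are $|\cQ|$ and $|\Pi\cap\cQ|$.

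First I would compute $|\cQ|$. Since $\cQ$ is a quadric cone in $\PG(3,q)$, its point count is $q^2+q+1$ (a cone over a conic, with one vertex), independently of $\delta$; this is standard, e.g.\ from \cite[Table 15.5]{hirschfeld1985finite} or the classification used in Lemma~\ref{lm:carquad}. Next I would compute $|\Pi\cap\cQ|$: the plane $\Pi:\,b_1X+d_1Z=0$ cuts the quadric cone $\cQ$ in a plane conic (possibly degenerate), and the key point is to determine whether this intersection is a non-degenerate conic ($q+1$ points), a pair of lines, a single line, or a point. Because $\Pi$ is a fixed plane not through the vertex in general, the relevant discriminant governing the type of $\Pi\cap\cQ$ should reduce — after substituting $X=-d_1Z/b_1$ (using $b_1\ne0$, which holds since $\frac{d}{b}\in\fq$ and $b\ne0$ allow us to normalize) — to an expression in $\delta=b_2^2\omega-b_1^2$ and $\kappa$. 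I expect: if $\kappa\ne0$, then $\Pi$ does not meet $\cQ_0$, the vertex of $\cQ$ is off $\Pi$, and $\Pi\cap\cQ$ is a non-degenerate conic, so $|\Pi\cap\cQ|=q+1$; if $\kappa=0$, then $\Pi$ passes through the vertex of $\cQ$, so $\Pi\cap\cQ$ is a pair of lines (hence $2q+1$ points) or a single line with multiplicity ($q+1$ points) according as $\delta\in\square_q$ or $\delta\notin\square_q$ — this dichotomy coming from whether the relevant binary quadratic form $b_2t_1^2-2b_1t_1t_2+b_2\omega t_2^2$ restricted appropriately splits over $\fq$, whose discriminant is (up to a factor $b_2^2$) exactly $\delta=b_2^2\omega-b_1^2$ times a constant. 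Wait — more precisely, when the vertex lies on $\Pi$, the conic $\Pi\cap\cQ$ is the cone over that point, so it is two lines through the vertex if the residual binary form has two rational roots and a double line otherwise, and $\delta$ is the discriminant controlling this.

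Finally I would assemble the four numbers into Lemma~\ref{lm:Srid}'s formula $E_q(\cC)=\tfrac12(|\cQ|-|\cQ_0|-|\Pi\cap\cQ|+|\cQ_0\cap\Pi|)+q$ in each of the three cases. For $\kappa\ne0$: $\tfrac12((q^2+q+1)-1-(q+1)+0)+q=\tfrac12(q^2-1)+q$. For $\kappa=0,\ \delta\in\square_q$: $\tfrac12((q^2+q+1)-1-(2q+1)+1)+q=\tfrac12(q^2-q)+q$. For $\kappa=0,\ \delta\notin\square_q$: $\tfrac12((q^2+q+1)-1-(q+1)+1)+q=\tfrac12(q^2+q)+q$. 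These match the claimed values. The main obstacle I anticipate is the bookkeeping in step two — correctly identifying the geometric type of the plane section $\Pi\cap\cQ$ in terms of $\kappa$ and $\delta$, in particular verifying that the vertex-on-$\Pi$ condition is precisely $\kappa=0$ (which is already furnished by the lemma on $\Pi\cap\cQ_0$ together with Lemma~\ref{lm:carquad}'s description of the vertex) and that the split/non-split distinction for the residual line-pair is governed by $\delta$ rather than by some more complicated combination; this is a finite but slightly delicate linear-algebra computation with the matrix $M$.
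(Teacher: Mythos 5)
Your overall strategy is exactly the paper's: feed the four quantities $|\cQ|$, $|\cQ_0|$, $|\Pi\cap\cQ|$, $|\cQ_0\cap\Pi|$ into Lemma~\ref{lm:Srid}, with $|\cQ|=q^2+q+1$ for the cone, $|\cQ_0|=1$, and the $\kappa$-dichotomy for $|\cQ_0\cap\Pi|$. The cases $\kappa\ne 0$ and $\kappa=0,\ \delta\in\square_q$ are handled correctly. But there is a genuine error in the case $\kappa=0,\ \delta\notin\square_q$: you classify the plane section $\Pi\cap\cQ$ as ``a single line with multiplicity ($q+1$ points)''. That is the wrong branch of the trichotomy. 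When $\Pi$ passes through the vertex, its section of the cone is governed by the residual binary form $F_1(t_1,t_2)=b_2t_1^2-2b_1t_1t_2+b_2\omega t_2^2$ (after the $(X,Z)$-part dies because $\kappa=0$): two distinct lines if $F_1$ splits over $\fq$, a repeated line if $F_1$ is a perfect square, and \emph{only the vertex} if $F_1$ is irreducible over $\fq$. The repeated-line case would require the discriminant of $F_1$, which is $-4\delta$ up to squares, to vanish; but $\delta=b_2^2\omega-b_1^2=0$ would force $b_1=b_2=0$ since $\omega$ is a non-square, contradicting $b\ne 0$. So the non-split case gives $|\Pi\cap\cQ|=1$, not $q+1$.

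Your own arithmetic betrays the slip: with your value $q+1$ the assembled count is $\tfrac12\bigl((q^2+q+1)-1-(q+1)+1\bigr)+q=\tfrac12 q^2+q$, which is not an integer for odd $q$ and is not the $\tfrac12(q^2+q)+q$ you wrote. Substituting the correct $|\Pi\cap\cQ|=1$ gives $\tfrac12\bigl((q^2+q+1)-1-1+1\bigr)+q=\tfrac12(q^2+q)+q$, as claimed in the theorem. With that one correction (ruling out the double-line case via $\delta\ne0$ and replacing $q+1$ by $1$), your argument coincides with the paper's proof.
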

	\begin{proof}
		If $ \delta'=0 $,
		\begin{itemize}
			\item $\kappa=0$. This means that $\Pi$ is a plane either meeting $\cQ$ only at the vertex $v$ or through two generators of $\cQ$. More precisely, it depends on whether $\delta=b_2^2\omega-b_1^2$ is in $\square_{q}$ or not (note that $\delta$ cannot be equal to zero). Thus, we have
			\begin{equation*}
				|\Pi \cap \cQ|=\begin{cases}
					\begin{aligned}
						2q+1, \mbox{ if } \delta \in \square_{q} \\
						1, \mbox{ if } \delta \notin \square_{q}
					\end{aligned}
				\end{cases}
			\end{equation*}
			\item $\kappa\neq 0$. This implies that $\Pi$ intersects $\cQ$ in a non-singular conic with $q+1$ points, not containing $v$ and $\cQ_0=\{v\}$, where $v$ is the vertex of $\cQ$.
		\end{itemize}
		Finally, the contribution of $\Pi$ to $E_q(\cC)$ is $q$, as we have already seen.
	\end{proof}
	
	\begin{lem}\label{lm:deltano0}
		Let $\cS=\Pi \cup \cQ$. If $\delta' \ne 0$,
		\[|\Pi \cap \cQ| = \begin{cases}
			\begin{aligned}
				1&, \quad \kappa=0, \delta \notin \square_q \\
				2q+1&, \quad \kappa=0, \delta \in  \square_q \\
				q+1&, \quad \kappa\neq 0
			\end{aligned}
		\end{cases}\]
	\end{lem}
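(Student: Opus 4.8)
The plan is to recognize $\Pi\cap\cQ$ as a plane section of the non-singular quadric $\cQ$ and to read off its number of $\fq$-rational points from the classification of conics. By Lemma~\ref{lm:carquad}, the hypothesis $\delta'\neq 0$ forces $\cQ$ to be elliptic or hyperbolic, so $\Pi\cap\cQ$ is a conic of the plane $\Pi\cong\PG(2,q)$, and it is either non-singular ($q+1$ rational points), a pair of distinct $\fq$-rational lines ($2q+1$ points), or a pair of conjugate lines meeting in one $\fq$-rational point ($1$ point); a repeated line cannot occur as a section of a non-singular quadric in odd characteristic. So everything reduces to writing this conic down explicitly and deciding which of the three types it is.

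First I would coordinatize $\Pi$. Writing $\Pi\colon b_1X+d_1Z=0$ and assuming $b_1\neq 0$ — as in the rest of this section; the case $b_1=0$, which forces $d_1=0$, is handled symmetrically via the equivalent equation $b_2X+d_2Z=0$, with $b_2\neq 0$ throughout the reducible case — I use $(t_1:t_2:Z)$ as homogeneous coordinates on $\Pi$ with $X=-\tfrac{d_1}{b_1}Z$. Substituting this into \eqref{quadric}, the part $a'X^2+c'XZ+e'Z^2$ reduces to $\tfrac{1}{b_1^2}\bigl(a'd_1^2-c'd_1b_1+e'b_1^2\bigr)Z^2=\tfrac{\kappa}{b_1^2}Z^2$, so $\Pi\cap\cQ$ is the conic of $\PG(2,q)$ cut out by $b_2t_1^2-2b_1t_1t_2+b_2\omega t_2^2+\tfrac{\kappa}{b_1^2}Z^2=0$. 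Its $3\times 3$ matrix has determinant $\tfrac{\kappa}{b_1^2}\,(b_2^2\omega-b_1^2)=\tfrac{\kappa\delta}{b_1^2}$, and since $\delta=b_2^2\omega-b_1^2\neq 0$ (recall $\omega$ is a non-square and $b\neq 0$) this vanishes exactly when $\kappa=0$.

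When $\kappa\neq 0$ the conic is non-singular, hence has $q+1$ rational points — equivalently, $\Pi$ is not a tangent plane of $\cQ$. When $\kappa=0$ the conic degenerates to $b_2t_1^2-2b_1t_1t_2+b_2\omega t_2^2=0$, i.e.\ to the pair of lines of $\Pi$ through $(0:0:1)$ cut out by that binary form; since $\delta\neq 0$ the form has distinct roots, so these two lines are distinct, and the only thing left to decide is whether they are defined over $\fq$ or are conjugate. This I would settle by completing the square (legitimate since $b_2\neq 0$): the binary form is isotropic over $\fq$ precisely when $\delta\in\square_q$, in which case $\Pi\cap\cQ$ is a pair of $\fq$-rational lines through $(0:0:1)$ and carries $2q+1$ points, while if $\delta\notin\square_q$ the form is anisotropic and $\Pi\cap\cQ=\{(0:0:1)\}$, a single point. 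Collecting the cases $\kappa\neq 0$, $\kappa=0$ with $\delta\notin\square_q$, and $\kappa=0$ with $\delta\in\square_q$ yields the three values $q+1$, $1$, $2q+1$ in the statement. One can also sanity-check this against the type of $\cQ$: $\kappa=0$ forces $a'X^2+c'XZ+e'Z^2$ to vanish at $(X:Z)=(-d_1:b_1)$, whence $\delta'=c'^2-4a'e'\in\square_q$, and combined with the determinant $-\tfrac{1}{4}\delta\delta'$ of $\cQ$ from Lemma~\ref{lm:carquad} this identifies the $2q+1$ (resp.\ $1$) subcase with $\cQ$ being hyperbolic (resp.\ elliptic), as it must be. The only step requiring genuine care is this last square-classification in the degenerate case; covering $b_1=0$ uniformly is routine bookkeeping.
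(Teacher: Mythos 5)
Your proof follows the same route as the paper's: restrict the quadric \eqref{quadric} to the plane $\Pi$, obtain the plane conic $b_2t_1^2-2b_1t_1t_2+b_2\omega t_2^2+\tfrac{\kappa}{b_1^2}Z^2=0$, and read off $q+1$ points when $\kappa\neq 0$ (non-singular conic) and a pair of distinct lines when $\kappa=0$; the paper does exactly this and cites Hirschfeld for the degenerate count, where you instead complete the square. The only point to flag is the labelling of the two degenerate subcases: multiplying the binary form by $b_2$ gives $(b_2t_1-b_1t_2)^2+\delta t_2^2$, whose discriminant is $-4\delta$, so the form is isotropic over $\fq$ precisely when $-\delta\in\square_q$, not when $\delta\in\square_q$; the two conditions agree only for $q\equiv 1\pmod 4$ (for instance with $q=3$, $b_1=0$, $b_2=1$ the form $t_1^2+\omega t_2^2=t_1^2-t_2^2$ splits over $\fq$ while $\delta=\omega\notin\square_q$). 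Your labelling matches the lemma as printed, so the discrepancy sits in the statement (and in Theorem \ref{th:Qs}) rather than in your argument, and it affects neither the set of attained values $\{1,\,2q+1,\,q+1\}$ nor anything downstream; still, the correct correspondence is $2q+1$ for $-\delta\in\square_q$ and $1$ for $-\delta\notin\square_q$. Everything else --- the computation $\tfrac{\kappa}{b_1^2}Z^2$, the determinant $\kappa\delta/b_1^2$, the non-vanishing of $\delta$, the $b_1=0$ bookkeeping, and the consistency check that $\kappa=0$ forces $\delta'\in\square_q$ --- is correct.
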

	\begin{proof}
		Note that a point $P=(t_1:t_2:X,-\frac{b_1}{d_1}X) \in \Pi \cap \cQ$ if and only if
		\[F_1(t_1,t_2)+ X^2 \kappa=0,\]
		where $F_1(t_1,t_2)=b_2 t_1^2+b_2 \omega t_2^2-2 t_1 t_2 b_1$. Thus, if $\kappa \ne 0$ there are the $q+1$ points of a non-singular conic. On the other hand, if $\kappa=0$ the number of solutions depend on whether $\delta \in \square_{q}$ or not. The claim follows by \cite[Par. 15.3]{hirschfeld1985finite}.
	\end{proof}
	
	\begin{thm}\label{th:Qns}
		Let $\cS=\Pi \cup \cQ$, with $\cQ$ a non-singular quadric surface. Then
		\begin{itemize}

			\item  if $\kappa\ne 0$ and $\delta' \in \square_q$ then \[E_q(\cC)=\begin{cases}	\begin{aligned}	&\frac{1}{2}(q^2-q-2)+q,\quad \delta \notin \square_{q},\\
					&\frac{1}{2}(q^2+q-2)+ q,\quad \delta \in \square_{q};
				\end{aligned}
			\end{cases}\]
			\item if $\kappa\ne 0$ and $\delta' \notin \square_q$ then \[E_q(\cC)=\begin{cases}
				\begin{aligned}
					&\frac{1}{2}(q^2-q)+q,\quad  \delta \in \square_{q},\\
					&\frac{1}{2}(q^2+q)+ q, \quad \delta \notin \square_{q};
				\end{aligned}
			\end{cases}\]
			\item if $\kappa=0$ \emph{(}and $\delta' \in \square_{q}$\emph{)}, then $E_q(\cC)=\frac{1}{2}(q^2-1)+ q$.
		\end{itemize}
	\end{thm}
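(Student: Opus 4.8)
The plan is to read off $E_q(\cC)$ from the counting identity of Lemma~\ref{lm:Srid},
\[ E_q(\cC)=\tfrac12\big(|\cQ|-|\cQ_0|-|\Pi\cap\cQ|+|\cQ_0\cap\Pi|\big)+q, \]
by substituting the four cardinalities on the right, all of which (except $|\cQ|$) have already been computed in the preceding lemmas. Since $\cQ$ is now non-singular we have $\delta'\neq 0$, so $|\cQ_0|=2$ when $\delta'\in\square_q$ and $|\cQ_0|=0$ when $\delta'\notin\square_q$; moreover $|\cQ_0\cap\Pi|$ is $1$ if $\kappa=0$ and $0$ if $\kappa\neq0$, while $|\Pi\cap\cQ|$ is $q+1$ if $\kappa\neq0$ and (by Lemma~\ref{lm:deltano0}) either $2q+1$ or $1$ if $\kappa=0$, according to the square class of $\delta$. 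The one remaining quantity, $|\cQ|$, equals $q^2+2q+1$ or $q^2+1$ depending on whether the non-singular quadric $\cQ$ of $\PG(3,q)$ is hyperbolic or elliptic, and by Lemma~\ref{lm:carquad} this is decided by the square class of $\det M=-\tfrac14\,\delta\delta'$ through the standard discriminant criterion for quadrics in $\PG(3,q)$.

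A preliminary observation removes one apparent case. If $\kappa=0$ then $\cQ_0\cap\Pi\neq\emptyset$, hence $\cQ_0\neq\emptyset$, hence $|\cQ_0|\geq 1$; since $\delta'\neq 0$ this forces $|\cQ_0|=2$ and $\delta'\in\square_q$. Thus ``$\kappa=0$'' already entails ``$\delta'\in\square_q$'' — which is why the third item of the statement may assume it — and the logically conceivable case ``$\kappa=0$, $\delta'\notin\square_q$'' does not arise. There remain exactly three situations.

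If $\kappa\neq0$ and $\delta'\in\square_q$, then $|\cQ_0|=2$, $|\cQ_0\cap\Pi|=0$ and $|\Pi\cap\cQ|=q+1$, so $E_q(\cC)=\tfrac12(|\cQ|-q-3)+q$; inserting $|\cQ|=q^2+1$ or $q^2+2q+1$ gives $\tfrac12(q^2-q-2)+q$ in the elliptic case and $\tfrac12(q^2+q-2)+q$ in the hyperbolic case, and one checks, using $\det M=-\tfrac14\delta\delta'$ with $\delta'$ a square, that these correspond respectively to $\delta\notin\square_q$ and $\delta\in\square_q$. If $\kappa\neq0$ and $\delta'\notin\square_q$, then $|\cQ_0|=0=|\cQ_0\cap\Pi|$ and $|\Pi\cap\cQ|=q+1$, so $E_q(\cC)=\tfrac12(|\cQ|-q-1)+q$, producing $\tfrac12(q^2-q)+q$ or $\tfrac12(q^2+q)+q$ exactly as stated. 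Finally, if $\kappa=0$, then $\delta'\in\square_q$, $|\cQ_0|=2$ and $|\cQ_0\cap\Pi|=1$, so $E_q(\cC)=\tfrac12(|\cQ|-|\Pi\cap\cQ|-1)+q$; the two admissible possibilities are ``$\cQ$ hyperbolic with $|\Pi\cap\cQ|=2q+1$'' and ``$\cQ$ elliptic with $|\Pi\cap\cQ|=1$'', since the type of $\cQ$ and the value of $|\Pi\cap\cQ|$ are controlled by the same square-class condition on $\delta$, and in both cases $E_q(\cC)=\tfrac12(q^2-1)+q$.

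The step needing the most care is the determination of the type of $\cQ$: one must track the square class of $\det M=-\tfrac14\delta\delta'$ through the discriminant criterion for non-singular quadrics of $\PG(3,q)$, and, in the case $\kappa=0$, verify that this is the very condition Lemma~\ref{lm:deltano0} uses to decide between $|\Pi\cap\cQ|=2q+1$ and $|\Pi\cap\cQ|=1$ — this coincidence is exactly what makes both sub-cases collapse to the single value $\tfrac12(q^2-1)+q$. Everything else is direct substitution into the formula of Lemma~\ref{lm:Srid}.
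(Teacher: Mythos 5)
Your proposal is correct and takes essentially the same route as the paper: the paper's proof is literally the one-line instruction to combine Lemma \ref{lm:Srid} with Lemma \ref{lm:deltano0}, the counts of $|\cQ_0|$ and $|\cQ_0\cap\Pi|$, and the point counts $q^2+1$ (elliptic) and $q^2+2q+1$ (hyperbolic), which is exactly the substitution you carry out, with the added (correct and useful) observation that $\kappa=0$ forces $\delta'\in\square_q$. The one step you defer to ``one checks'' --- deciding the type of $\cQ$ from the square class of $\det M=-\tfrac{1}{4}\delta\delta'$ and matching it to the square class of $\delta$ --- is also left implicit in the paper, so your write-up is, if anything, more complete than the original.
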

	\begin{proof}
		The claims follows from Lemmas \ref{lm:Srid} and \ref{lm:deltano0} and from \cite[Tables 15.6 and 15.7]{hirschfeld1985finite} for the number of points of quadrics over $\fq$.
	\end{proof}
	
	\section{Proof of Theorems 1.1 and 1.2}
	We are now able to prove our main theorems. More precisely Theorem \ref{th:C1} follows from Theorems \ref{th:Weil}, \ref{th:Qs}, \ref{th:Qns} and the next result.
	\begin{thm}[{\cite{swinnerton2010cubic}}]
		In \emph{Theorem \ref{th:Weil}} the bounds are best possible, except that when $ q = 2, 3 $ or $ 5 $ the upper bound can be improved to $ \alpha \leq 5 $.
	\end{thm}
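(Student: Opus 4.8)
The plan is to sketch the argument behind this quoted result of Swinnerton--Dyer, which runs through the arithmetic of the $27$ lines and the Weyl group $W(E_6)$; I will not reproduce the calculations. The starting point is the Grothendieck--Lefschetz trace formula applied to the smooth cubic surface $\cS$. Since $\cS$ is rational, $H^1_{\mathrm{et}}=H^3_{\mathrm{et}}=0$, $H^0_{\mathrm{et}}$ and $H^4_{\mathrm{et}}$ are one--dimensional, and the cycle class map identifies $H^2_{\mathrm{et}}(\overline{\cS},\mathbb{Q}_\ell)$ with $\Pic(\overline{\cS})\otimes\mathbb{Q}_\ell$ up to a Tate twist; hence $|\cS(\fq)|=q^2+\alpha q+1$ with $\alpha=\mathrm{tr}\big(\mathrm{Frob}\mid\Pic(\overline{\cS})\big)$. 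The Frobenius fixes the anticanonical class, and on $K_{\cS}^{\perp}\subset\Pic(\overline{\cS})$ — which is the $E_6$ root lattice — it acts through a conjugacy class $w$ of $W(E_6)$, namely the permutation it induces on the $27$ lines. Thus $\alpha=1+\mathrm{tr}(w)$ for some $w\in W(E_6)$.

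First I would settle the list of admissible $\alpha$ by pure group theory: run over the $25$ conjugacy classes of $W(E_6)$ and compute the trace of each on the $6$--dimensional reflection representation, obtaining $\mathrm{tr}(w)\in\{-3,-2,-1,0,1,2,3,4,6\}$, i.e. $\alpha\in\{-2,-1,0,1,2,3,4,5,7\}$, matching Theorem~\ref{th:Weil} (and Manin's table). The value $\mathrm{tr}(w)=5$, i.e. $\alpha=6$, is the one conspicuously missing: an element with this trace would act as $\mathrm{diag}(1,1,1,1,\zeta_6,\zeta_6^{-1})$, hence would be the product of two reflections in roots meeting at angle $\pi/6$, generating a $G_2$ subsystem — impossible in the simply--laced $E_6$.

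Next comes the assertion that the bounds are \emph{best possible}, which is the substantial part. The key geometric input is that the incidence variety parametrising smooth cubic surfaces together with an ordering of their $27$ lines is irreducible with monodromy group exactly $W(E_6)$, and that its arithmetic and geometric monodromy coincide. A function--field Chebotarev argument, equivalently a Lang--Weil count on the fibres over the moduli space, then produces, for every sufficiently large $q$, a surface $\cS/\fq$ whose Frobenius class is any prescribed conjugacy class of $W(E_6)$, so that every value of $\alpha$ is attained. The remaining, and most delicate, point is to pin down exactly which small $q$ are genuine exceptions; this is done not by density but by exhibiting explicit surfaces (or showing none exists). I expect this small--$q$ bookkeeping to be the main obstacle.

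The single exceptional value $\alpha=7$ is elementary to analyse, and it is the only one that fails for $q=2,3,5$. Here $\mathrm{tr}(w)=6$ forces $w=1$, i.e. all $27$ lines are $\fq$--rational; a smooth cubic surface with this property is exactly the blow--up of $\mathbb{P}^2_{\fq}$ at six $\fq$--rational points, no three collinear and not all six on a conic. Such a configuration exists for $q=4$ — a hyperoval supplies six points with no three collinear, and a conic of $\PG(2,4)$ has only $q+1=5$ points, so they cannot all lie on one — and for every $q\ge 7$: choose five points in general position, lying on a unique conic and determining $\binom{5}{2}=10$ secants, and pick a sixth point off this configuration; a crude point count settles $q\ge 11$, and $q=7,8,9$ are checked directly. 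But for $q=2,3$ the largest arc of $\PG(2,q)$ has size $q+1\le 4<6$, so one cannot even find six points with no three collinear; and for $q=5$ an arc of six points is necessarily an oval, hence a conic, violating the last condition. Therefore $\alpha=7$ occurs precisely for $q=4$ and $q\ge 7$, which is exactly the stated exception at $q=2,3,5$, completing the sketch.
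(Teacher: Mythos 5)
The paper offers no proof of this statement: it is imported verbatim from \cite{swinnerton2010cubic}, so there is nothing internal to compare your argument against. Judged on its own terms, your outline follows the standard route (trace formula, $\alpha=1+\mathrm{tr}(w)$ for $w\in W(E_6)$ acting on $K_{\cS}^{\perp}$, enumeration of traces over the $25$ conjugacy classes), and the part you actually carry out --- the analysis of $\alpha=7$ --- is correct: $\mathrm{tr}(w)=6$ forces $w=1$, a split smooth cubic surface is the blow-up of $\mathbb{P}^2$ at six rational points in general position, and the arc-size bound $q+1$ (with Segre's theorem at $q=5$) rules out such a configuration exactly for $q=2,3,5$, while hyperovals ($q=4$) and a conic-plus-secants count ($q\geq 7$) produce one otherwise. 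This correctly isolates why the exception is precisely $\{2,3,5\}$ and why the improved bound is $\alpha\leq 5$ rather than $6$ (no element of $W(E_6)$ has trace $5$).

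The genuine gap is in the claim that the bounds are \emph{best possible}, which requires exhibiting, for \emph{every} $q$, a smooth cubic surface with $\alpha=-2$, and for $q=2,3,5$ one with $\alpha=5$ (and $\alpha=7$ for all other $q$, which you did handle). Your monodromy/Chebotarev argument only yields every conjugacy class for $q$ beyond an unspecified (and in practice large) threshold, and you explicitly defer the small-$q$ realizability to ``exhibiting explicit surfaces'' without doing so. But that small-$q$ analysis is not bookkeeping around the edges of the theorem --- it is the actual content of Swinnerton-Dyer's paper, which proceeds by a case analysis of conjugacy classes and explicit constructions (e.g.\ realizing $\mathrm{tr}(w)=-3$, i.e.\ $(q-1)^2$ points, over $\F_2$ and $\F_3$ is not something Lang--Weil can see). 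So the proposal is a correct strategy with a correct treatment of the upper-bound exception, but it does not establish the sharpness assertions for small $q$ and therefore does not prove the full statement.
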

	The proof of Theorem \ref{th:2} requires one last step. For further details about the next Theorem, see \cite{ball2011introduction}.
	\begin{thm}
		Let $\cC$ be an oval of a projective plane of order $q^2$, say $\Pi_{q^2}$, with $q$ being odd. Let $\mathcal{B}$ denote a blocking set of $\Pi_{q^2}$ and $\cE$ denote the set of points lying on a tangent to $\cC$.
		
		If $\cC \cap \mathcal{B}=k$, then $|\cE \cap \mathcal{B}|\geq \frac{q^2+1-k}{2}$.
	\end{thm}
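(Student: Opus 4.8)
The plan is to translate the statement into a double-counting argument over the lines of $\Pi_{q^2}$, exactly in the spirit of the classical tangent/secant count for ovals but now weighted by intersections with $\mathcal B$. First I would recall that an oval $\cC$ in a plane of order $q^2$ has exactly $q^2+1$ points, through each point of $\cC$ passes a unique tangent, and every point off $\cC$ lies on either $0$ or $2$ tangents (this is where $q$ odd, hence $q^2$ odd, is used). Thus the set $\cE$ is precisely the set of \emph{external} points together with the points of $\cC$ themselves; more useful is that the tangent lines to $\cC$ number exactly $q^2+1$, one per point of $\cC$.

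The key step is to count incident pairs $(P,\ell)$ with $P \in \mathcal B$, $\ell$ a tangent to $\cC$, and $P \in \ell$, in two ways. On one hand, summing over the $q^2+1$ tangent lines $\ell$: each tangent $\ell$ is a line of $\Pi_{q^2}$, hence meets the blocking set $\mathcal B$ in at least one point, so the total is at least $q^2+1$. On the other hand, summing over $P \in \mathcal B$: if $P \in \cC$ it lies on exactly one tangent (contributing $1$, and there are $k$ such points); if $P \in \cE \setminus \cC$ it lies on exactly $2$ tangents; and if $P \notin \cE$ it lies on $0$ tangents. Hence the total incidence count equals $k + 2\,|\,(\cE\setminus\cC)\cap\mathcal B\,| = k + 2\big(|\cE\cap\mathcal B| - k\big) = 2|\cE\cap\mathcal B| - k$, using $\cC \subseteq \cE$ so that $\cC \cap \mathcal B$ is counted inside $\cE \cap \mathcal B$. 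Combining, $2|\cE \cap \mathcal B| - k \ge q^2+1$, which rearranges to $|\cE \cap \mathcal B| \ge \frac{q^2+1+k}{2}$.

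I notice this bound is in fact slightly stronger than the stated one ($\frac{q^2+1-k}{2}$), which suggests the intended argument may instead count only tangents at points \emph{outside} $\mathcal B$, or count secants, to get the weaker but still useful inequality; alternatively one restricts attention to the $q^2+1-k$ tangent lines at the points of $\cC\setminus\mathcal B$, each of which must still be blocked by some point of $\mathcal B$, and that point lies on $\cE$, giving $\ge \frac{q^2+1-k}{2}$ after dividing by the multiplicity $2$ (since a point of $\cE\cap\mathcal B$ off $\cC$ lies on at most $2$ tangents). So the safe route is: consider the $q^2+1-k$ tangents through points of $\cC \setminus \mathcal B$; each meets $\mathcal B$ in a point necessarily lying in $\cE$ and not on $\cC$; each such point absorbs at most $2$ of these tangents; hence $|\cE \cap \mathcal B| \ge |\cE \cap \mathcal B \setminus \cC| \ge \frac{q^2+1-k}{2}$.

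The main obstacle is purely bookkeeping: being careful about whether points of $\mathcal B \cap \cC$ should be included in $\cE \cap \mathcal B$ (they are, since $\cC \subseteq \cE$), and pinning down the exact multiplicity with which a point of $\mathcal B$ off $\cC$ can meet the selected family of tangents ($0$ or $2$ in general, but at most $2$ suffices). No deep geometry is needed beyond the elementary tangent-count for ovals in odd-order planes and the defining property of a blocking set (every line meets it). I would present the clean version yielding $\frac{q^2+1-k}{2}$ via the $q^2+1-k$ tangents at $\cC\setminus\mathcal B$, as above.
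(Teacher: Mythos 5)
Your final ``clean version'' is exactly the paper's proof: each of the $q^2+1-k$ tangents at points of $\cC\setminus\mathcal{B}$ is blocked by a point of $\mathcal{B}$ that necessarily lies in $\cE$ and off $\cC$, and each such point absorbs at most two of these tangents, giving the bound. (The only caveat concerns your first double count: in the intended application $\cE$ is the set of external points, so $\cC\not\subseteq\cE$ and that count yields precisely $\frac{q^2+1-k}{2}$, not the stronger $\frac{q^2+1+k}{2}$.)
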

	\begin{proof}
		Each line of the plane, hence also the tangents to $\cC$, meets $\mathcal{B}$.
		If $t$ is a tangent to $\cC$ at $P\in \cC$, then $t \setminus \{P\} \subseteq \cE$ and hence $t \cap \mathcal{B} \subseteq (\cE \cap\mathcal{B})\cup \{P\}$.
		Thus $\cE \cap \mathcal{B}$ has a point of each of the tangents to the points of $\cC \setminus \mathcal{B}$.
		Since each point not in $\cC$ is incident with either $0$ or $2$ tangents to $\cC$, then $|\cE \cap \mathcal{B}| \geq \frac{q^2+1-k}{2}$.
	\end{proof}
	\begin{cor}\label{cr:rev}
		Let $\cC$ be an irreducible conic of $\PG(2,q^2)$, $ q $ odd, and let $\mathcal{B}$ denote a Baer subplane. Also, denote by $\cE$ the set of external points to $\cC$.
		
		Then $|\cE \cap \mathcal{B}|\geq \frac{q^2-3}{2}$.
	\end{cor}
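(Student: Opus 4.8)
The plan is to deduce the Corollary from the previous Theorem once the intersection number $k=|\cC\cap\mathcal{B}|$ is controlled, and the first task is to check that the Theorem applies. A Baer subplane $\mathcal{B}\cong\PG(2,q)$ of $\PG(2,q^2)$ is a blocking set, since every line of $\PG(2,q^2)$ meets it; and an irreducible conic $\cC$ is an oval of the plane $\PG(2,q^2)$ of odd order $q^2$, having $q^2+1$ points no three of which are collinear. In odd order a point off $\cC$ lies on exactly $0$ or $2$ tangents, so the set of external points to $\cC$ is precisely the set of off-conic points lying on a tangent, which is the set $\cE$ appearing in the previous Theorem. Hence that Theorem gives $|\cE\cap\mathcal{B}|\ge\frac{q^2+1-k}{2}$, and it remains to bound $k$. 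Since all Baer subplanes are projectively equivalent, I may assume $\mathcal{B}=\PG(2,q)$, so that $\cE\cap\mathcal{B}$ is exactly the set counted by $E_q(\cC)$.

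Next I would split according to whether $\cC$ is defined over $\fq$. If $\cC$ is \emph{not} defined over $\fq$, then the classical fact recalled in the Introduction applies: a conic meeting $\PG(2,q)$ in five or more points would be determined by five of those points and hence be defined over $\fq$, a contradiction; therefore $k=|\cC\cap\mathcal{B}|\le 4$. Substituting into the estimate yields $|\cE\cap\mathcal{B}|\ge\frac{q^2+1-k}{2}\ge\frac{q^2+1-4}{2}=\frac{q^2-3}{2}$, with equality exactly when $k=4$, which is precisely the stated bound.

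The step requiring care, and the only real obstacle, is the exceptional case in which $\cC$ \emph{is} defined over $\fq$. Here $k=q+1$, so the previous Theorem alone gives only $\frac{q^2+1-(q+1)}{2}=\frac{q^2-q}{2}$, which is strictly smaller than $\frac{q^2-3}{2}$ once $q>3$; the blocking-set estimate is thus too weak by itself and cannot close this case. To finish, I would instead invoke the maximal-case result of Section \ref{sec:max}: every conic defined over $\fq$ satisfies $E_q(\cC)=q^2$ (equivalently, by Theorem \ref{th:C1}), so in this case $|\cE\cap\mathcal{B}|=E_q(\cC)=q^2$, which trivially exceeds $\frac{q^2-3}{2}$. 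Combining the two cases gives $|\cE\cap\mathcal{B}|\ge\frac{q^2-3}{2}$ in all situations, completing the proof.
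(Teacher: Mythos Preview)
Your argument is correct and follows exactly the intended route: apply the preceding blocking-set Theorem to the Baer subplane, using the fact (recalled in the Introduction) that a conic not defined over $\fq$ meets $\PG(2,q)$ in at most four points, so $k\le 4$ gives the bound $\frac{q^2-3}{2}$. Your separate treatment of the case where $\cC$ is defined over $\fq$, via the result $E_q(\cC)=q^2$ of Section~\ref{sec:max}, is a welcome bit of care---the paper leaves this case tacit since the Corollary is only invoked later for conics not defined over $\fq$, but it is needed for the statement in full generality and introduces no circularity.
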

	Corollary \ref{cr:rev} shows that when $q >3$ then in Theorem \ref{th:sigmaq} we can exclude the cases $\alpha=-2,-1,0$.
	Taking this into account, the proof of Theorem \ref{th:2} is a direct consequence of Theorem \ref{th:sigmaq}, \ref{th:WeilSing}, \ref{th:Qs} and \ref{th:Qns}.
	
	\section{Example}
	
	Let $\cC_1$ be a conic of equation
	\[ax^2+bxy+dyz=0,\]
	with $\frac{b}{d} \notin \fq$ and $\frac{a}{d} \in \fq$. This means that we can rewrite the equation as
	\[a'x^2+b'xy+yz=0,\]
	where $ a' \in \fq $ and $b' \in \fqs \setminus \{\fq\}$.
	Thus, we have $ 2(-bcd + ad^2 + b^2e)=a'$ which is always a square in $\fqs$.
	We conclude that the System (\ref{sistem1con}) counts the number of external points to $\cC_1$.\\
	Lemma \ref{lm:n0inf} can be refined. Indeed, we have $ a'_1 b'_2X^3=0 $ admits only the root $(0:0:0:1)$. This implies that $n_0=1$ and $C_q=2$.
	
	From Theorem \ref{th:sigmaq1}, since $\cS$ is singular with $\delta'=0$, we have just one possibility for $\sigma_q$:
	\[ \sigma_q=\frac{1}{2}(q^2+2q-1).\]
	Using the Computational Algebra System Magma \cite{Magma}, we found the following values for $(q,E_q)$: $(3,7)$, $(5,17)$, $(7,31)$, $(9,49)$.
	
	\section*{Acknowledgements}
	The research of  V. Pallozzi L. was partially supported  by the Italian National Group for Algebraic and Geometric Structures and their Applications (GNSAGA - INdAM).
	
	\bibliographystyle{IEEEtranS}
	\bibliography{bib_tesi.bib}

% Generated by IEEEtranS.bst, version: 1.14 (2015/08/26)
\begin{thebibliography}{10}
\providecommand{\url}[1]{#1}
\csname url@samestyle\endcsname
\providecommand{\newblock}{\relax}
\providecommand{\bibinfo}[2]{#2}
\providecommand{\BIBentrySTDinterwordspacing}{\spaceskip=0pt\relax}
\providecommand{\BIBentryALTinterwordstretchfactor}{4}
\providecommand{\BIBentryALTinterwordspacing}{\spaceskip=\fontdimen2\font plus
\BIBentryALTinterwordstretchfactor\fontdimen3\font minus
  \fontdimen4\font\relax}
\providecommand{\BIBforeignlanguage}[2]{{%
\expandafter\ifx\csname l@#1\endcsname\relax
\typeout{** WARNING: IEEEtranS.bst: No hyphenation pattern has been}%
\typeout{** loaded for the language `#1'. Using the pattern for}%
\typeout{** the default language instead.}%
\else
\language=\csname l@#1\endcsname
\fi
#2}}
\providecommand{\BIBdecl}{\relax}
\BIBdecl

\bibitem{abatangelo2011mutual}
V.~Abatangelo, J.~C. Fisher, G.~Korchmáros, and B.~Larato, ``On the mutual
  position of two irreducible conics in $ \pg(2, q) $, $ q $ odd,''
  \emph{Advances in Geometry}, vol.~11, no.~4, pp. 603--614, 2011.

\bibitem{ball2011introduction}
S.~Ball and Z.~Weiner, ``An introduction to finite geometry,'' 2011, preprint.

\bibitem{bonini2021rational}
M.~Bonini, M.~Sala, and L.~Vicino, ``Rational points on cubic surfaces and ag
  codes from the norm-trace curve,'' \emph{arXiv preprint arXiv:2102.05478},
  2021.

\bibitem{Magma}
\BIBentryALTinterwordspacing
W.~Bosma, J.~Cannon, and C.~Playoust, ``The {M}agma algebra system. {I}. {T}he
  user language,'' \emph{J. Symbolic Comput.}, vol.~24, no. 3-4, pp. 235--265,
  1997, computational algebra and number theory (London, 1993). [Online].
  Available: \url{http://dx.doi.org/10.1006/jsco.1996.0125}
\BIBentrySTDinterwordspacing

\bibitem{casse2006projective}
R.~Casse, \emph{Projective geometry: an introduction}.\hskip 1em plus 0.5em
  minus 0.4em\relax OUP Oxford, 2006.

\bibitem{hirschfeld1979projective}
J.~W.~P. Hirschfeld, \emph{Projective geometry over finite fields}.\hskip 1em
  plus 0.5em minus 0.4em\relax Clarendon Press, 1979.

\bibitem{hirschfeld1985finite}
------, \emph{Finite projective spaces of three dimensions}.\hskip 1em plus
  0.5em minus 0.4em\relax Oxford University Press, 1985.

\bibitem{hirschfeld1991general}
J.~W.~P. Hirschfeld, J.~A. Thas \emph{et~al.}, \emph{General Galois
  geometries}.\hskip 1em plus 0.5em minus 0.4em\relax Springer, 1991.

\bibitem{manin1986cubic}
Y.~I. Manin, \emph{Cubic forms: algebra, geometry, arithmetic}.\hskip 1em plus
  0.5em minus 0.4em\relax Elsevier, 1986.

\bibitem{PL_Con}
\BIBentryALTinterwordspacing
V.~Pallozzi~Lavorante, ``External points to a conic from a baer subplane,''
  2021, submitted. [Online]. Available: \url{https://arxiv.org/abs/2104.12434}
\BIBentrySTDinterwordspacing

\bibitem{swinnerton2010cubic}
P.~Swinnerton-Dyer, ``Cubic surfaces over finite fields,'' \emph{Cambridge
  University Press}, vol. 149, no.~3, pp. 385--388, 2010.

\end{thebibliography}
	
\end{document}